\theoremstyle{remark}
\newtheorem{remark}{Remark}[section]
\theoremstyle{remark}
\newtheorem{test}{Test}[section]
\theoremstyle{plain}
\newtheorem{theorem}{Theorem}[section]
\newtheorem{proposition}{Proposition}[section]
\newtheorem{lemma}{Lemma}[section]
\newcommand{\numberset}{\mathbb}
\newcommand{\N}{\numberset{N}}
\newcommand{\B}{\numberset{B}}
\newcommand{\Pk}{\numberset{P}}
\newcommand{\R}{\numberset{R}}
\newcommand{\K}{\numberset{K}}
\renewcommand{\epsilon}{\varepsilon}
\renewcommand{\theta}{\vartheta}
\renewcommand{\rho}{\varrho}
\renewcommand{\phi}{\varphi}
{\left\lbrace\begin{array}{@{}l@{}}}%
{\end{array}\right.}
\theoremstyle{remark}
\author{Giuseppe Vacca  
  \thanks{Dipartimento di Matematica e Applicazioni,  Universit\`a degli Studi di Milano Bicocca, Via Roberto Cozzi 55 - 20125 Milano, Italy; E-mail: giuseppe.vacca@unimib.it.} 
  }
\title{An $H^1$-conforming  Virtual Element Methods for  Darcy equations and  Brinkman equations}
\date{}
\begin{document}

\maketitle
\begin{abstract} The focus of the present paper is on developing a Virtual Element Method for Darcy and Brinkman equations.
In \cite{stokes}  we presented a family of Virtual Elements for Stokes equations and we defined a new Virtual Element space of velocities  such that the associated discrete kernel is pointwise divergence-free.  We use a slightly different Virtual Element space having two fundamental properties: the $L^2$-projection onto $\mathbb{P}_k$ is exactly computable on the basis of the degrees of freedom, and the
associated discrete kernel is still pointwise divergence-free. 
The resulting numerical scheme for the Darcy equation has optimal order of convergence and $H^1$ conforming velocity solution. 
We can apply the same approach to develop a robust virtual element method for the Brinkman equation that is stable for both the Stokes and Darcy limit case.
We provide a rigorous error analysis of the method and several numerical tests.
 \end{abstract}
%
%
%
%

\maketitle

\section{Introduction}
\label{sec:1}

The \textbf{Virtual Element Methods} (in short, VEM or VEMs) is a recent technique for solving PDEs. VEMs were recently introduced in \cite{VEM-volley} as a generalization of the finite element method on polyhedral or polygonal meshes. 
In the numerical analysis and engineering literature there has been a recent growth of interest 
in developing numerical methods that 
can make use of general polygonal and polyhedral meshes, as opposed to more standard triangular/quadrilateral (tetrahedral/hexahedral) 
grids. Indeed, making use of polygonal meshes brings forth a range of advantages, including for instance automatic {hanging node treatment}, 
more efficient approximation of geometric data features, better domain meshing 
capabilities, more efficient and easier 
adaptivity, more robustness to mesh deformation, and others. This interest in the literature is also reflected in commercial codes, 
such as CD-Adapco, that have recently included polytopal meshes.

We refer to the recent papers and monographs 
\cite{%
BLS05bis,BLM11book,Bishop13,JA12,LMSXX,RW12,POLY37,%
ST04,TPPM10,VW13,Wachspress11,DiPietro-Ern-1,di2016discontinuous,Gillette-1,PolyDG-1} %
as a brief representative sample of the increasing list of technologies that make use of polygonal/polyhedral meshes. We mention 
here in particular the polygonal finite elements, that generalize finite elements to polygons/polyhedrons by making use of 
generalized non-polynomial shape functions, and the mimetic discretisation schemes \cite{lopezvacca, da2015symplectic}, that combine ideas from the finite difference and 
finite element methods.

The principal idea behind VEM is to use approximated discrete bilinear forms that require only integration of polynomials on the (polytopal) 
element in order to be computed. The resulting discrete solution is conforming and the accuracy granted by such discrete bilinear forms 
turns out to be sufficient to achieve the correct order of convergence. 
Following this approach, VEM is able to make use of very general polygonal/polyhedral meshes without the need to integrate 
complex non-polynomial functions on the elements and without loss of accuracy. Moreover, 
VEM is not restricted to low order converge and can be easily applied to three dimensions and use non convex (even non simply connected) elements.
The Virtual Element Method has been developed successfully for a large range of problems, see for instance \cite{VEM-volley,Brezzi:Marini:plates,VEM-elasticity,VEM-enhanced,BM13,BFMXX,VEM-stream,GTP14,BBPSXX,benedetto2016hybrid,mora2015virtual, hpvem, lovadina,caceres2015mixed,wriggers,plates-zhao,vaccabis, vaccahyper, giani, gardini, frittelli}. 
A helpful paper for the implementation of the method is \cite{VEM-hitchhikers}.

The focus of this paper is on developing a new Virtual Element Method for the Darcy equation that is suitable for a robust extension to the (more complex) Brinkman problem. 
For such a problem, other VEM numerical schemes  have been proposed, see for example \cite{BFMXX, da2016mixed}. 

In \cite{stokes} the authors developed a new Virtual Element Method for  Stokes problems by exploiting  the flexibility of the Virtual Element construction in a new way. In particular, they define a new Virtual Element space of velocities carefully designed to solve the Stokes problem. In connection with a suitable pressure space, the new Virtual Element space leads to an exactly divergence-free discrete velocity, a favorable property when more complex problems, such as the Navier-Stokes problem, are considered. We highlight that this feature is not shared by the method defined in \cite{VEM-elasticity} or by most of the standard mixed Finite Element methods, where the divergence-free constraint is imposed only in a weak (relaxed) sense. 

In the present contribution we develop  the Virtual Element Method for Darcy equations by introducing a slightly different virtual space for the velocities such that the local $L^2$ orthogonal  projection onto the space of polynomials of degree less or equal than $k$ (where $k$ is the polynomial degree of accuracy of the method) can be computed using the local degrees of freedom. The resulting Virtual Elements family inherits the advantages on the scheme proposed in \cite{stokes}, in particular it yields an exactly divergence-free discrete kernel. Thus we obtain a stable Darcy element that is also uniformly stable for the Stokes problem. A sample of uniformly stable methods for Darcy-Stokes model is for instance  \cite{mardal2002robust, xie2008uniformly, karper2009unified, vassilev}.

The last part of the paper deals with the analysis of a new mixed finite element method for  Brinkman equations that stems from the above scheme for the Darcy problem. Mathematically, the Brinkman problem resembles both the Stokes problem for fluid flow and the Darcy problem for flow in porous media (see \cite{stenberg, anaya2015priori, neilan}).
Constructing finite element methods to solve the Brinkman equation that are robust for both (Stokes and Darcy) limits is challenging. We will see how the above Virtual Element approach offers a natural and straightforward
framework for constructing stable numerical algorithms for the Brinkman
equations.

We remark that the proposed scheme belongs to the class of the \textit{pressure-robust} method, i.e. delivers a velocity error independent of the continuous pressure.



The paper is organized as follows. 
In Section \ref{sec:2} we introduce the model continuous Darcy problem. 
In Section \ref{sec:3} we present its VEM discretisation. 
In Section \ref{sec:4} we detail the theoretical features and the convergence analysis of the problem. 
In Section \ref{sec:6} we  develop a stable numerical methods for  Brinkman equations.
In Section \ref{sec:7} we show the numerical tests.
Finally in the Appendix we present the theoretical analysis of the extension to the Darcy equation of the scheme of  \cite{VEM-elasticity}. Even though this latter method is not recommended for the Darcy problem, the numerical  experiments showed an unexpected optimal convergence rate for the pressure. We theoretically prove this behaviour, developing an inverse inequality for the VEM space, which is interesting on its own. 


\section{The continuous problem}
\label{sec:2}

We consider the classical Darcy equation that describes the flow of a fluid through a porous medium.
Let $\Omega \subseteq \R^2$ be a bounded polygon then the Darcy equation in \textbf{mixed form} is
\begin{equation}
\label{eq:darcy primale}
\left\{
\begin{aligned}
& \text{find $(\mathbf{u}, p)$ such that} \\
&  \K^{-1}  \mathbf{u}  + \nabla p = \mathbf{0} \qquad  & &\text{in $\Omega$,} \\
& {\rm div} \, \mathbf{u} = f \qquad & &\text{in $\Omega$,} \\
& \mathbf{u} \cdot \mathbf{n} = 0  \qquad & &\text{on $ \partial \Omega$,}
\end{aligned}
\right.
\end{equation}
where $\mathbf{u}$ and $p$ are respectively the velocity and the pressure fields, $f \in L^2(\Omega)$ is the source term and $\K$ is a uniformly symmetric, positive definite tensor that represents the permeability of the medium. 
%
From \eqref{eq:darcy primale}, since we have assumed no flux boundary
conditions all over $\partial \Omega$, the external force $f$ has zero mean value on $\Omega$. 
We consider the spaces
\begin{equation*}
\mathbf{V}:= \left\{ \mathbf{u} \in H({\rm div}, \Omega), \quad \text{s.t} \quad \mathbf{u} \cdot \mathbf{n} = 0 \quad \text{on $\partial \Omega$} \right\}, \qquad Q:= L^2_0(\Omega) = \left\{ q \in L^2(\Omega) \quad \text{s.t.} \quad \int_{\Omega} q \,{\rm d}\Omega = 0 \right\} 
\end{equation*}
equipped with the natural norms
\begin{equation*}
\| \mathbf{v}\|_{\mathbf{V}}^2 := \| \mathbf{v}\|_{\left[L^2(\Omega) \right]^2}^2   +  \| {\rm div} \, \mathbf{v}\|_{L^2(\Omega)}^2 \quad , \qquad 
\|q\|_Q := \| q\|_{L^2(\Omega)},
\end{equation*}
and the bilinear forms $a(\cdot, \cdot) \colon \mathbf{V} \times \mathbf{V} \to \R$ and $b(\cdot, \cdot) \colon \mathbf{V} \times Q \to \R$ defined by:
\begin{equation}
\label{eq:forma a}
a (\mathbf{u},  \mathbf{v}) := \int_{\Omega} \K^{-1} \,  \mathbf{u} \cdot   \mathbf{v} \,{\rm d} \Omega, \qquad \text{for all $\mathbf{u},  \mathbf{v} \in \mathbf{V}$}
\end{equation}
\begin{equation}
\label{eq:forma b}
b(\mathbf{v}, q) :=  \int_{\Omega} {\rm div} \,\mathbf{v}\, q \,{\rm d}\Omega \qquad \text{for all $\mathbf{v} \in \mathbf{V}$, $q \in Q$.}
\end{equation}
Then the \textbf{variational formulation} of Problem \eqref{eq:darcy primale} is
\begin{equation}
\label{eq:darcy variazionale}
\left\{
\begin{aligned}
& \text{find $(\mathbf{u}, p) \in \mathbf{V} \times Q$, such that} \\
& a(\mathbf{u}, \mathbf{v}) + b(\mathbf{v}, p) = \mathbf{0} \qquad & \text{for all $\mathbf{v} \in \mathbf{V}$,} \\
&  b(\mathbf{u}, q) = (f, q) \qquad & \text{for all $q \in Q$,}
\end{aligned}
\right.
\end{equation}
where
\[
 (f, q) := \int_{\Omega} f\, q \, {\rm d} \Omega \qquad \text{for all $q \in Q$.}
\]
Let us introduce the kernel
\begin{equation*}
\mathbf{Z} := \{\mathbf{v} \in \mathbf{V} \quad \text{s.t.}  \quad  b(\mathbf{v}, q) =0 \quad \text{for all $q \in Q$}\};
\end{equation*}
then it is straightforward to see that
\[
\| \mathbf{v}\|_{\mathbf{V}} := \| \mathbf{v}\|_{\left[L^2(\Omega) \right]^2} \qquad \text{for all $\mathbf{v} \in \mathbf{Z}$.}
\]
It is well known that (see for instance \cite{fortin1991mixed}):
\begin{itemize}
\item $a(\cdot, \cdot)$ and $b(\cdot, \cdot)$ are continuous, i.e.
\[
|a(\mathbf{u}, \mathbf{v})| \leq \|a\| \|\mathbf{u}\|_{\mathbf{V}}\|\mathbf{v}\|_{\mathbf{V}} \qquad \text{for all $\mathbf{u}, \mathbf{v} \in \mathbf{V}$,}
\] 
\[
|b(\mathbf{v}, q)| \leq \|b\| \|\mathbf{v}\|_{\mathbf{V}} \|q\|_Q \qquad \text{for all $\mathbf{v} \in \mathbf{V}$ and $q \in Q$;} 
\]
\item $a(\cdot, \cdot)$ is coercive on the kernel $\mathbf{Z}$, i.e. there exists a positive constant $\alpha$ depending on $\K$ such that 
\begin{equation}
\label{eq:coercive}
a(\mathbf{v}, \mathbf{v}) \geq \alpha \|\mathbf{v}\|^2_{\mathbf{V}} \qquad \text{for all $\mathbf{v} \in \mathbf{Z}$;}
\end{equation}
\item $b(\cdot,\cdot) $ satisfies the inf-sup condition, i.e.   
\begin{equation}
\label{eq:inf-sup}
\exists \, \beta >0 \quad \text{such that} \quad \sup_{\mathbf{v} \in \mathbf{V} \, \mathbf{v} \neq \mathbf{0}} \frac{b(\mathbf{u}, q)}{ \|\mathbf{v}\|_{\mathbf{V}}} \geq \beta \|q\|_Q \qquad \text{for all $q \in Q$.}
\end{equation}
\end{itemize}
Therefore, Problem \eqref{eq:darcy variazionale} has a unique solution $(\mathbf{u}, p) \in \mathbf{V} \times Q$ such that
\[
\|\mathbf{u}\|_{\mathbf{V}} + \|p\|_Q \leq C \, \|f\|_{L^2(\Omega)}
\]
with the constant $C$ depending only on $\Omega$ and $\K$.


\section{Virtual formulation for Darcy equations}
\label{sec:3}

\subsection{Decomposition and the original virtual element spaces}
\label{sub:3.1}
 
We outline the Virtual Element discretization of Problem \eqref{eq:darcy variazionale}. 
Here and in the rest of the paper the symbol $C$ will indicate a generic positive constant independent of the mesh size that may change at 
each occurrence. Moreover, given any subset $\omega$ in ${\mathbb R}^2$ and $k \in {\mathbb N}$, we will denote by $\Pk_k(\omega)$ the 
polynomials of total degree at most $k$ defined on $\omega$, with the extended notation $\Pk_{-1}(\omega)=\emptyset$.
Let $\set{\mathcal{T}_h}_h$ be a sequence of decompositions of $\Omega$ into general polygonal elements $K$ with
\[
 h_K := {\rm diameter}(K) , \quad
h := \sup_{K \in \mathcal{T}_h} h_K .
\]
We suppose that for all $h$, each element $K$ in $\mathcal{T}_h$ fulfils the following assumptions:
\begin{itemize}
\item $\mathbf{(A1)}$ $K$ is star-shaped with respect to a ball of radius $ \ge\, \gamma \, h_K$, 
\item $\mathbf{(A2)}$ the distance between any two vertexes of $K$ is $\ge c \, h_K$, 
\end{itemize}
where $\gamma$ and $c$ are positive constants. We remark that the hypotheses above, though not too restrictive in many practical cases, 
can be further relaxed, as noted in ~\cite{VEM-volley, 2016stability}. 
From now on we assume that $\K$ is  piecewise constant with respect to $\mathcal{T}_h$ on $\Omega$. 

Using standard VEM notation, for $k \in \N$, let us define the spaces 
\begin{itemize}
\item $\Pk_k(K)$ the set of polynomials on $K$ of degree $\le k$,
\item $\B_k(K) := \{v \in C^0(\partial K) \quad \text{s.t} \quad v_{|e} \in \Pk_k(e) \quad \forall\mbox{ edge } e \subset \partial K\}$,
\item $\mathcal{G}_{k}(K):= \nabla(\Pk_{k+1}(K)) \subseteq [\Pk_{k}(K)]^2$,
\item $\mathcal{G}_{k}(K)^{\perp} := \mathbf{x}^{\perp}[\Pk_{k-1}(K)] \subseteq [\Pk_{k}(K)]^2$ with $\mathbf{x}^{\perp}:= (x_2, -x_1)$.
\end{itemize}
In \cite{stokes} the authors have introduced a new family of Virtual Elements for the Stokes problem
on polygonal meshes. In particular, by a proper choice of the Virtual space of velocities, the virtual local spaces are associated to a Stokes-like variational problem on each element. The main ideas of the method are
\begin{itemize}
\item the Virtual space contains the space of all the polynomials of the prescribed order plus suitable non polynomial functions,
\item the degrees of freedom are carefully chosen so that the $H^1$ semi-norm projection onto the space of polynomials can be exactly computed,
\item the choice of the Virtual space of velocities and the associated degrees of freedom guarantee that the final discrete velocity is pointwise divergence-free and more generally the discrete kernel is contained in the continuous one.
\end{itemize}
In this section we briefly  recall from \cite{stokes} the notations, the main properties of the Virtual spaces and some details of the construction of the  $H^1$ semi-norm projection.
Let $k \geq 2$ the polynomial degree of accuracy of the method, then we define  on each element $K \in \mathcal{T}_h$ the finite dimensional local virtual space
\begin{multline}
\label{eq:W_h^K}
\mathbf{W}_h^K := \biggl\{  
\mathbf{v} \in [H^1(K)]^2 \quad \text{s.t} \quad \mathbf{v}_{|{\partial K}} \in [\B_k(\partial K)]^2 \, , \biggr.
\\
\left.
\biggl\{
\begin{aligned}
& - \boldsymbol{\Delta}    \mathbf{v}  -  \nabla s \in \mathcal{G}_{k-2}(K)^{\perp},  \\
& {\rm div} \, \mathbf{v} \in \Pk_{k-1}(K),
\end{aligned}
\biggr. \qquad \text{ for some $s \in L^2(K)$}
\quad \right\}
\end{multline}
where all the operators and equations above are to be interpreted in the distributional sense. 
It is easy to check that
$
[\Pk_k(K)]^2 \subseteq \mathbf{W}_h^K$,  
and that (see \cite{stokes} for the proof) the dimension of $\mathbf{W}_h^K$ is
\begin{equation}
\label{eq:dimensione W_h^K}
\begin{split}
\dim\left( \mathbf{W}_h^K \right) &= \dim\left([\B_k(\partial K)]^2\right) + \dim\left(\mathcal{G}_{k-2}(K)^{\perp}\right) + \left( \dim(\Pk_{k-1}(K)) - 1\right) \\
&= 2n_K k + \frac{(k-1)(k-2)}{2}  + \frac{(k+1)k}{2} - 1.
\end{split}
\end{equation}
The corresponding degrees of freedom are chosen prescribing, given a function $\mathbf{v} \in \mathbf{W}_h^K$, the following linear operators $\mathbf{D_V}$, split into four subsets (see Figure \ref{fig:dofsloc}):
\begin{itemize}
\item $\mathbf{D_V1}$:  the values of $\mathbf{v}$ at the vertices of the polygon $K$,
\item $\mathbf{D_V2}$: the values of $\mathbf{v}$ at $k-1$ distinct points of every edge $e \in \partial K$ (for example we can take the $k-1$ internal points of the $(k+1)$-Gauss-Lobatto quadrature rule  in $e$, as suggested in \cite{VEM-hitchhikers}),
\item $\mathbf{D_V3}$: the moments of $\mathbf{v}$
\[
\int_K \mathbf{v} \cdot \mathbf{g}_{k-2}^{\perp} \, {\rm d}K \qquad \text{for all $\mathbf{g}_{k-2}^{\perp} \in \mathcal{G}_{k-2}(K)^{\perp}$,}
\]
\item $\mathbf{D_V4}$: the moments up to order $k-1$ and greater than zero of ${\rm div} \,\mathbf{v}$ in $K$, i.e.
\[
\int_K ({\rm div} \,\mathbf{v}) \, q_{k-1} \, {\rm d}K \qquad \text{for all $q_{k-1} \in \Pk_{k-1}(K) / \R$.}
\] 
\end{itemize} 

\begin{figure}[!h]
\center{
\includegraphics[scale=0.20]{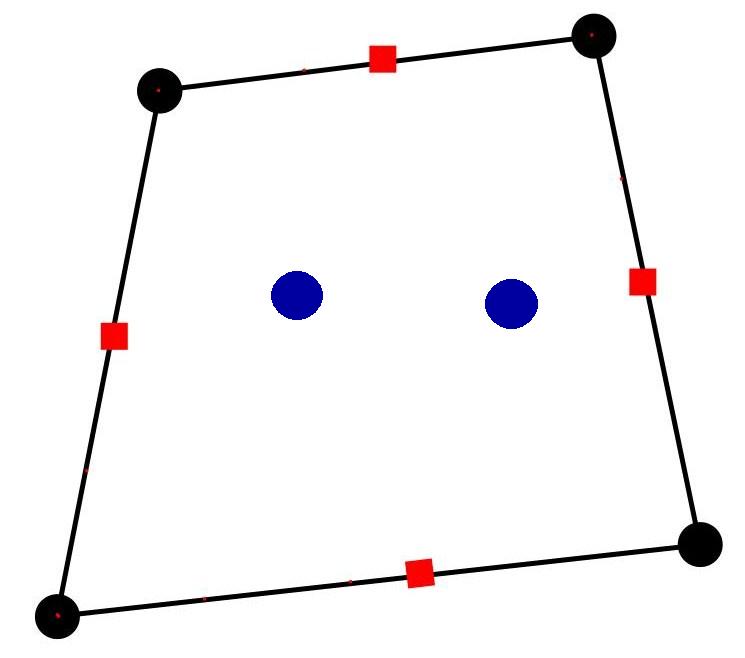} \qquad \qquad
\includegraphics[scale=0.20]{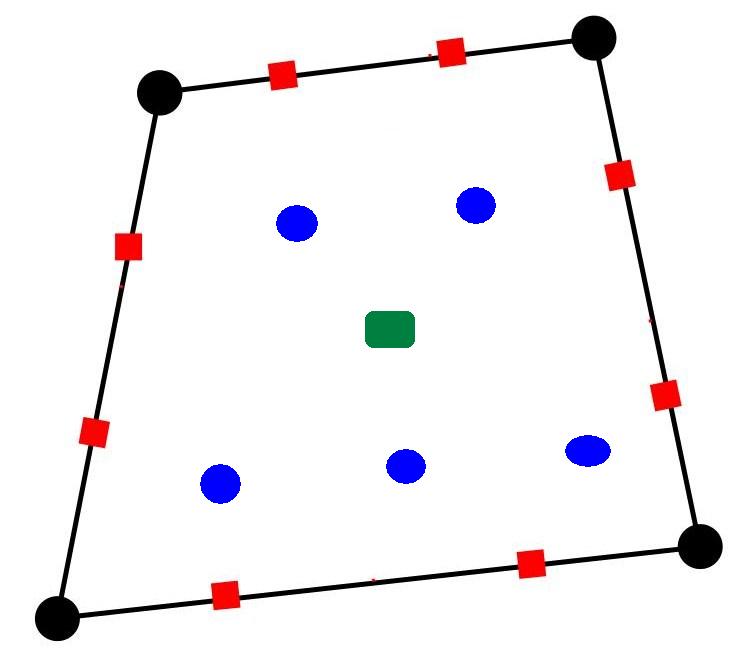}
\caption{Degrees of freedom for $k=2$, $k=3$. We denote $\mathbf{D_V1}$ with the black dots, $\mathbf{D_V2}$ with the red squares, $\mathbf{D_V3}$ with the green rectangles, $\mathbf{D_V4}$ with the blue dots inside the element.}
\label{fig:dofsloc}
}
\end{figure}
For all $K \in \mathcal{T}_h$, we introduce the $H^1$ semi-norm projection ${\Pi}_{k}^{\nabla,K} \colon \mathbf{W}_h^K \to [\Pk_k(K)]^2$, defined by 
\begin{equation}
\label{eq:Pi_k^K}
\left\{
\begin{aligned}
& \int_K \boldsymbol{\nabla} \,\mathbf{q}_k : \boldsymbol{\nabla} (\mathbf{v}_h - \, {\Pi}_{k}^{\nabla,K}   \mathbf{v}_h) \, {\rm d} K = 0 \qquad  \text{for all $\mathbf{q}_k \in [\Pk_k(K)]^2$,} \\
& \Pi_0^{0,K}(\mathbf{v}_h - \,  {\Pi}_{k}^{\nabla,K}  \mathbf{v}_h) = \mathbf{0} \, ,
\end{aligned}
\right.
\end{equation} 
where $\Pi_0^{0,K}$ is the $L^2$-projection operator onto the constant functions defined on $K$. It is immediate to check that the energy projection is well defined and it clearly holds ${\Pi}_{k}^{\nabla,K} \mathbf{q}_k = \mathbf{q}_k$ for all $\mathbf{q}_k \in \Pk_k(K)$.
Moreover  the operator ${\Pi}_{k}^{\nabla,K}$ is computable in terms of the degrees of freedom $\mathbf{D_V}$ (see equations $(27)-(29)$ in \cite{stokes} and the subsequent discussion).


\subsection{The modified virtual space and the projection $\Pi^{0,K}_{k}$}
\label{sub:3.2}
Let $n$ a positive integer, then for all $K \in \mathcal{T}_h$, the $L^2$-projection $\Pi^{0,K}_{n} \colon \mathbf{W}^K_h \to [\Pk_{n}(K)]^2$ is defined by
\[
\int_K  \mathbf{q}_{n} \cdot (\mathbf{v}_h - \Pi^{0,K}_{n} \mathbf{v}_h)  \, {\rm d} K = 0 \qquad \text{for all $\mathbf{q}_n \in  [\Pk_{n}(K)]^2$.}
\]
It is possible to check (see Section 3.3 of \cite{stokes} for the proof) that the degrees of freedom $\mathbf{D_V}$ allow us to compute exactly the $L^2$-projection $\Pi^{0,K}_{k-2}$.  On the other hand we can observe that we can not  compute exactly  from the DoFs  the $L^2$-projection onto the space of polynomials of degree $\leq k$.
The goal of the present section is to introduce, taking the inspiration from \cite{VEM-enhanced}, a new virtual space $\mathbf{V}_h^K$ to be used in place of $\mathbf{W}_h^K$ in such a way that
\begin{itemize}
\item the DoFs $\mathbf{D_V}$ can still be used for $\mathbf{V}_h^K$,
\item $[\Pk_k(K)]^2 \subseteq \mathbf{V}_h^K$,
\item the projection $\Pi_k^{0,K}\colon \mathbf{V}_h^K \to [\Pk_k(K)]^2$ can be exactly computable by the DoFs $\mathbf{D_V}$.
\end{itemize}

To construct $\mathbf{V}_h^K$ we proceed as follows: first of all we define an \textbf{augmented virtual local space} $\mathbf{U}_h^K$ by taking
\begin{multline*}
\mathbf{U}_h^K := \biggl\{  
\mathbf{v} \in [H^1(K)]^2 \quad \text{s.t} \quad \mathbf{v}_{|{\partial K}} \in [\B_k(\partial K)]^2 \, , \biggr.
\\
\left.
\biggl\{
\begin{aligned}
& - \boldsymbol{\Delta}    \mathbf{v}  -  \nabla s \in \mathcal{G}_{k}(K)^{\perp},  \\
& {\rm div} \, \mathbf{v} \in \Pk_{k-1}(K),
\end{aligned}
\biggr. \qquad \text{ for some $s \in L^2(K)$}
\quad \right\}
\end{multline*}
Now we define the \textbf{enhanced Virtual Element space} $\mathbf{V}_h^K$ as the restriction  of $\mathbf{U}_h^K$ given by
\begin{equation}
\label{eq:V_h^K}
\mathbf{V}_h^K := \left\{ \mathbf{v} \in \mathbf{U}_h^K \quad \text{s.t.} \quad   \left(\mathbf{v} - \Pi^{\nabla,K}_k \mathbf{v}, \, \mathbf{g}_k^{\perp} \right)_{[L^2(K)]^2} = 0 \quad \text{for all $\mathbf{g}_k^{\perp} \in  \mathcal{G}_{k}(K)^{\perp}/\mathcal{G}_{k-2}(K)^{\perp}$} \right\} ,
\end{equation}
where the symbol $\mathcal{G}_{k}(K)^{\perp}/\mathcal{G}_{k-2}(K)^{\perp}$ denotes the polynomials in $\mathcal{G}_{k}(K)^{\perp}$ that are $L^2-$orthogonal to all polynomials of $\mathcal{G}_{k-2}(K)^{\perp}$.
We proceed by investigating the dimension and by choosing suitable DoFs of the virtual space $\mathbf{V}_h$. First of all we recall from  \cite{conforming} the following facts 
\begin{equation}
\label{eq:dimensioni}
 \dim\left([\B_k(\partial K)]^2\right) = 2n_K k, \quad  \dim\left(\Pk_{k-1}(K)\right) = \frac{k(k+1)}{2}, \quad  \dim\left(\mathcal{G}_{k}(K)^{\perp}\right) = \frac{k(k+1)}{2} 
\end{equation}
where $n_K$ is the number of edges of the polygon $K$.

\begin{lemma}
\label{lm:augmented}
The dimension of $\mathbf{U}_h^K$ is
\begin{equation*}
\begin{split}
\dim\left( \mathbf{U}_h^K \right) 
&= 2n_K k + \frac{k(k+1)}{2}  + \frac{(k+1)k}{2} - 1.
\end{split}
\end{equation*}
Moreover as DoFs for $\mathbf{U}_h^K$ we can take the linear operators $\mathbf{D_V}$ and plus the moments 
\[
\mathbf{D_U} \colon \int_K \mathbf{v} \cdot \mathbf{g}_{k}^{\perp} \, {\rm d}K \qquad \text{for all $\mathbf{g}_{k}^{\perp} \in \mathcal{G}_{k}(K)^{\perp}/\mathcal{G}_{k-2}(K)^{\perp}$.}
\] 
\end{lemma}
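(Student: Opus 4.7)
The plan is to prove both assertions by mirroring the argument used in \cite{stokes} for $\mathbf{W}_h^K$, the only substantive modification being that the forcing space is enlarged from $\mathcal{G}_{k-2}(K)^{\perp}$ to $\mathcal{G}_{k}(K)^{\perp}$.

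\textbf{Step 1 (Dimension count).} I would view every $\mathbf{v}\in\mathbf{U}_h^K$ as the velocity component of a Stokes-like problem on $K$: given $\mathbf{v}_{\partial}\in[\B_k(\partial K)]^2$, $\mathbf{g}_k^{\perp}\in\mathcal{G}_k(K)^{\perp}$ and $p_{k-1}\in\Pk_{k-1}(K)$, there exists a unique $\mathbf{v}\in[H^1(K)]^2$ (with pressure $s$ determined up to a constant) solving
\[
-\boldsymbol{\Delta}\mathbf{v}-\nabla s=\mathbf{g}_k^{\perp},\qquad \mathrm{div}\,\mathbf{v}=p_{k-1}\text{ in }K,\qquad \mathbf{v}|_{\partial K}=\mathbf{v}_{\partial},
\]
provided the compatibility condition $\int_{K}p_{k-1}\,\mathrm{d}K=\int_{\partial K}\mathbf{v}_{\partial}\cdot\mathbf{n}\,\mathrm{d}s$ holds. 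This removes exactly one degree of freedom from $\Pk_{k-1}(K)$. Combining this with the identities in \eqref{eq:dimensioni} yields
\[
\dim(\mathbf{U}_h^K)=2n_K k+\frac{k(k+1)}{2}+\frac{k(k+1)}{2}-1,
\]
as claimed.

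\textbf{Step 2 (Matching the cardinality of $\mathbf{D_V}\cup\mathbf{D_U}$).} By \eqref{eq:dimensione W_h^K}, the set $\mathbf{D_V}$ consists of $2n_K k+\tfrac{(k-1)(k-2)}{2}+\tfrac{k(k+1)}{2}-1$ functionals, while $\mathbf{D_U}$ adds $\dim(\mathcal{G}_k(K)^{\perp})-\dim(\mathcal{G}_{k-2}(K)^{\perp})=\tfrac{k(k+1)}{2}-\tfrac{(k-1)(k-2)}{2}$ new functionals. The two totals agree with $\dim(\mathbf{U}_h^K)$ from Step~1, so it suffices to prove unisolvence.

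\textbf{Step 3 (Unisolvence).} Suppose $\mathbf{v}\in\mathbf{U}_h^K$ annihilates every DoF. Since $\mathbf{v}|_{\partial K}\in[\B_k(\partial K)]^2$, the vanishing of $\mathbf{D_V1}$ and $\mathbf{D_V2}$ forces $\mathbf{v}|_{\partial K}=\mathbf{0}$. The divergence theorem then gives $\int_K\mathrm{div}\,\mathbf{v}\,\mathrm{d}K=0$, and combined with $\mathbf{D_V4}=0$ and the constraint $\mathrm{div}\,\mathbf{v}\in\Pk_{k-1}(K)$ this yields $\mathrm{div}\,\mathbf{v}\equiv 0$. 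Finally, $\mathbf{D_V3}=\mathbf{D_U}=0$ means $\mathbf{v}$ is $L^2$-orthogonal to every element of $\mathcal{G}_k(K)^{\perp}$. Testing the interior equation $-\boldsymbol{\Delta}\mathbf{v}-\nabla s=\mathbf{g}_k^{\perp}$ against $\mathbf{v}$ and integrating by parts, the boundary and pressure contributions cancel thanks to the zero trace and divergence-free conditions, leaving
\[
\int_K|\nabla\mathbf{v}|^2\,\mathrm{d}K=\int_K\mathbf{g}_k^{\perp}\cdot\mathbf{v}\,\mathrm{d}K=0.
\]
Hence $\mathbf{v}$ is constant on $K$, and the vanishing trace forces $\mathbf{v}\equiv\mathbf{0}$.

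\textbf{Main obstacle.} The most delicate issue is justifying the Stokes-like parametrization of $\mathbf{U}_h^K$ on a general star-shaped polygon, in particular the dimension drop arising from the flux compatibility condition; however this is precisely the ingredient used in \cite{stokes} to establish \eqref{eq:dimensione W_h^K}, and can be reused here verbatim after replacing $\mathcal{G}_{k-2}(K)^{\perp}$ by $\mathcal{G}_{k}(K)^{\perp}$ in the forcing space.
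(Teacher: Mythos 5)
Your proposal is correct and follows essentially the same route as the paper: both reduce the dimension count to the well-posedness of the local Stokes-like problem with data in $[\B_k(\partial K)]^2\times\mathcal{G}_k(K)^{\perp}\times\Pk_{k-1}(K)$ subject to the single flux compatibility constraint, exactly as in \cite{stokes} with $\mathcal{G}_{k-2}(K)^{\perp}$ replaced by $\mathcal{G}_{k}(K)^{\perp}$. Your explicit energy argument for unisolvence (zero trace, zero divergence, orthogonality to $\mathcal{G}_k(K)^{\perp}$, hence $\int_K|\nabla\mathbf{v}|^2=0$) is a correct spelling-out of what the paper leaves implicit by reference to \cite{stokes}; the only point you gloss over, and which the paper makes explicit via the isomorphism ${\rm rot}\colon\mathcal{G}_{k}(K)^{\perp}\to\Pk_{k-1}(K)$, is that the datum $\mathbf{h}$ is uniquely determined by $\mathbf{v}$, which is what makes the data-to-velocity map injective and the dimension count exact.
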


\begin{proof}
The proof is virtually identical to that given in \cite{stokes} for $\mathbf{W}_h^K$ and it is based (see for instance \cite{fortin1991mixed}) on the fact that given
\begin{itemize}
\item a polynomial function $\mathbf{g}_b \in [\B_k({\partial K})]^2$,
\item a polynomial function $\mathbf{h} \in \mathcal{G}_{k}(K)^{\perp}$,
\item a polynomial function $g \in \Pk_{k-1}(K)$  satisfying the compatibility condition
\begin{equation*}
\int_{K} g \, {\rm d}\Omega = \int_{\partial K} \mathbf{g}_b \cdot \mathbf{n} \, {\rm d}s, 
\end{equation*}
\end{itemize}
there exists a unique pair $(\mathbf{v}, s)\in  \mathbf{U}_h^K\times  L^2(K) / \R$ such that
\begin{equation}\label{datasol}
\mathbf{v}_{|\partial K} = \mathbf{g}_b, \quad {\rm div} \, \mathbf{v} = g, \quad 
 -  \boldsymbol{\Delta}    \mathbf{v}  -  \nabla s = \mathbf{h}.
\end{equation}
Moreover, since from \cite{conforming},  ${\rm rot} \colon \mathcal{G}_{k}(K)^{\perp} \to \Pk_{k-1}(K)$
is an isomorphism, 
we can conclude that  the map that associates a given compatible data set   
$(\mathbf{g}_{b}, \, \mathbf{h}, \, g)$ to the velocity field $\mathbf{v}$ that solves~\eqref{datasol} is an injective map. Then
\[
\dim\left( \mathbf{U}_h^K \right) = \dim\left([\B_k(\partial K)]^2\right) + \dim\left(\mathcal{G}_{k}(K)^{\perp}\right) + \left( \dim(\Pk_{k-1}(K)) - 1\right)
\]
and the thesis follows from \eqref{eq:dimensioni}.
\end{proof}

\begin{proposition}
\label{prop:enhanced}
The dimension of $\mathbf{V}_h^K$ is equal to that of $\mathbf{W}_h^K$ that is, as in \eqref{eq:dimensione W_h^K}
\begin{equation}
\label{eq:dimensione V_h^K}
\begin{split}
\dim\left( \mathbf{V}_h^K \right) &=  2n_K k + \frac{(k-1)(k-2)}{2}  + \frac{(k+1)k}{2} - 1.
\end{split}
\end{equation}
As DoFs in $\mathbf{V}_h^K$ we can take $\mathbf{D_V}$.
\end{proposition}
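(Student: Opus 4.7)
The plan is to sandwich $\dim(\mathbf{V}_h^K)$ between two matching bounds: a lower bound obtained by counting how many constraints cut $\mathbf{V}_h^K$ out of $\mathbf{U}_h^K$, and an upper bound obtained from unisolvence of $\mathbf{D_V}$ on $\mathbf{V}_h^K$. For the lower bound, I first compute
\[
\dim\left( \mathcal{G}_{k}(K)^{\perp}/\mathcal{G}_{k-2}(K)^{\perp} \right) = \frac{k(k+1)}{2} - \frac{(k-2)(k-1)}{2} = 2k-1 ,
\]
so the enhancement relation in \eqref{eq:V_h^K} imposes at most $2k-1$ linear conditions on $\mathbf{U}_h^K$. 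Combined with Lemma \ref{lm:augmented}, this gives
\[
\dim(\mathbf{V}_h^K) \ge \dim(\mathbf{U}_h^K) - (2k-1) = 2n_K k + \frac{(k-1)(k-2)}{2} + \frac{(k+1)k}{2} - 1 ,
\]
which already matches the cardinality $|\mathbf{D_V}|$ claimed in \eqref{eq:dimensione V_h^K}.

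The key computational fact I would establish next is that $\Pi_k^{\nabla,K}$, when evaluated on $\mathbf{v}\in\mathbf{U}_h^K$, is fully determined by $\mathbf{D_V}(\mathbf{v})$ alone. Indeed, integrating by parts in the first equation of \eqref{eq:Pi_k^K} produces a boundary integral of $\mathbf{v}_{|\partial K}$ (computable from $\mathbf{D_V1}$ and $\mathbf{D_V2}$) and a volume term of the form $\int_K \boldsymbol{\Delta}\mathbf{q}_k \cdot \mathbf{v} \,{\rm d}K$. Splitting $[\Pk_{k-2}(K)]^2 = \mathcal{G}_{k-2}(K) \oplus \mathcal{G}_{k-2}(K)^{\perp}$, the component in $\mathcal{G}_{k-2}(K) = \nabla\Pk_{k-1}(K)$ is handled by a second integration by parts that uses only the boundary trace together with the divergence moments $\mathbf{D_V4}$, while the $\mathcal{G}_{k-2}(K)^{\perp}$ component is read off directly from $\mathbf{D_V3}$. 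This is the same computation used in \cite{stokes}; the essential observation is that the interior characterisation of $\mathbf{U}_h^K$ (with $\mathcal{G}_k(K)^{\perp}$ in place of $\mathcal{G}_{k-2}(K)^{\perp}$) is never invoked, so the argument transfers verbatim from $\mathbf{W}_h^K$ to $\mathbf{U}_h^K$.

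With that in hand, the upper bound follows immediately. Let $\mathbf{v} \in \mathbf{V}_h^K$ satisfy $\mathbf{D_V}(\mathbf{v}) = 0$. The previous paragraph yields $\Pi_k^{\nabla,K}\mathbf{v} = \mathbf{0}$, so the enhancement constraint in \eqref{eq:V_h^K} collapses to
\[
\left( \mathbf{v}, \mathbf{g}_k^{\perp} \right)_{[L^2(K)]^2} = 0 \qquad \text{for all } \mathbf{g}_k^{\perp} \in \mathcal{G}_k(K)^{\perp}/\mathcal{G}_{k-2}(K)^{\perp} ;
\]
that is, the extra moments $\mathbf{D_U}(\mathbf{v})$ vanish as well. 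Applying Lemma \ref{lm:augmented} to $\mathbf{v}\in\mathbf{U}_h^K$ then forces $\mathbf{v}=\mathbf{0}$, so $\mathbf{D_V}$ is injective on $\mathbf{V}_h^K$ and $\dim(\mathbf{V}_h^K) \le |\mathbf{D_V}|$. Matching the two bounds proves both \eqref{eq:dimensione V_h^K} and the unisolvence of $\mathbf{D_V}$. The only delicate point, and the one I would write out in most detail, is the computability claim for $\Pi_k^{\nabla,K}$ on the enlarged space $\mathbf{U}_h^K$: everything else is bookkeeping, but here one must be careful that enlarging the interior residual from $\mathcal{G}_{k-2}(K)^{\perp}$ to $\mathcal{G}_{k}(K)^{\perp}$ does not spoil the integration-by-parts identities that underlie the construction.
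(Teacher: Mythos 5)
Your proposal is correct and follows essentially the same route as the paper: the lower bound $\dim(\mathbf{V}_h^K)\ge\dim(\mathbf{U}_h^K)-(2k-1)$ by counting the enhancement constraints, and the matching upper bound by showing that $\mathbf{D_V}(\mathbf{v})=\mathbf{0}$ forces $\Pi_k^{\nabla,K}\mathbf{v}=\mathbf{0}$, hence $\mathbf{D_U}(\mathbf{v})=\mathbf{0}$, hence $\mathbf{v}=\mathbf{0}$ by Lemma \ref{lm:augmented}. Your explicit verification that the computability of $\Pi_k^{\nabla,K}$ from $\mathbf{D_V}$ carries over from $\mathbf{W}_h^K$ to the augmented space $\mathbf{U}_h^K$ (because the integration-by-parts argument never uses the interior characterisation) is a detail the paper treats as immediate by reference to \cite{stokes}, and it is a worthwhile point to make explicit.
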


\begin{proof}
From \eqref{eq:dimensioni} it is straightforward to check that
\[
\dim\left(\mathcal{G}_{k}(K)^{\perp}/ \mathcal{G}_{k-2}(K)^{\perp}\right) = \dim\left(\mathcal{G}_{k}(K)^{\perp}\right) - \dim\left(\mathcal{G}_{k-2}(K)^{\perp}\right) = 2k -1.
\]
Hence, neglecting the independence of the additional $2k - 1$ conditions in \eqref{eq:V_h^K}, it holds that
\begin{equation}
\label{eq:dimension}
\dim\left( \mathbf{V}_h^K \right) \geq \dim\left( \mathbf{U}_h^K \right) - (2k -1) =  2n_K k + \frac{(k-1)(k-2)}{2}  + \frac{(k+1)k}{2} - 1 = \dim\left( \mathbf{W}_h^K \right).
\end{equation}
We now observe that a function $\mathbf{v} \in \mathbf{V}_h^K$ such that $\mathbf{D_V}(\mathbf{v}) = 0$ is identically zero. Indeed,  from \eqref{eq:Pi_k^K}, it is immediate to check that in this case the $\Pi_k^{\nabla, K} \, \mathbf{v}$ would be zero, implying that all its
moment are zero, in particular, since $\mathbf{v} \in \mathbf{V}_h^K$, all the moments $\mathbf{D_U}$ of $\mathbf{v}$ are also zero. Now, from Lemma \ref{lm:augmented}, we have that $\mathbf{v}$ is zero.
Therefore, from \eqref{eq:dimension}, we obtain that the dimension of $\mathbf{V}_h^K$ is actually the same of  $\mathbf{W}_h^K$, and that the DoFs $\mathbf{D_V}$ are unisolvent for $\mathbf{V}_h^K$.
\end{proof}

\begin{proposition}
\label{prp:pkprojection}
The degrees of freedom $\mathbf{D_V}$ allow us to compute exactly the $L^2$-projection $\Pi^{0,K}_{k} \colon \mathbf{V}_h \to [\Pk_{k}(K)]^2$, i.e. the moments
\[
\int_K \mathbf{v} \cdot \mathbf{q}_{k} \, {\rm d} K
\]
for all $\mathbf{v} \in \mathbf{V}_h$ and for all $\mathbf{q}_{k} \in [\Pk_{k}(K)]^2$. 
\end{proposition}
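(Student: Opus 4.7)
The plan is to compute $\int_K \mathbf{v}\cdot\mathbf{q}_k\,dK$ for every test polynomial $\mathbf{q}_k\in[\Pk_k(K)]^2$ by exploiting the orthogonal decomposition
\[
[\Pk_k(K)]^2 = \mathcal{G}_k(K)\,\oplus\,\mathcal{G}_k(K)^{\perp}
= \mathcal{G}_k(K)\,\oplus\,\mathcal{G}_{k-2}(K)^{\perp}\,\oplus\,\bigl(\mathcal{G}_k(K)^{\perp}/\mathcal{G}_{k-2}(K)^{\perp}\bigr),
\]
and showing that on each of the three summands the integral is computable from $\mathbf{D_V}$.

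The easy summand is $\mathcal{G}_{k-2}(K)^{\perp}$: if $\mathbf{q}_k\in\mathcal{G}_{k-2}(K)^{\perp}$, then $\int_K\mathbf{v}\cdot\mathbf{q}_k\,dK$ is exactly a DoF of type $\mathbf{D_V3}$. For the summand $\mathcal{G}_k(K)^{\perp}/\mathcal{G}_{k-2}(K)^{\perp}$, I would invoke the defining constraint of $\mathbf{V}_h^K$ in \eqref{eq:V_h^K}: for every such $\mathbf{g}_k^{\perp}$,
\[
\int_K \mathbf{v}\cdot\mathbf{g}_k^{\perp}\,dK = \int_K \Pi_k^{\nabla,K}\mathbf{v}\cdot\mathbf{g}_k^{\perp}\,dK,
\]
and the right-hand side is computable because $\Pi_k^{\nabla,K}\mathbf{v}$ is already known to be computable from $\mathbf{D_V}$ (as recalled after \eqref{eq:Pi_k^K}). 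This is precisely the reason the enhancement condition in the definition of $\mathbf{V}_h^K$ was imposed.

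The remaining, and slightly less trivial, summand is $\mathcal{G}_k(K)=\nabla(\Pk_{k+1}(K))$. Writing $\mathbf{q}_k=\nabla p_{k+1}$ and integrating by parts, I would use
\[
\int_K\mathbf{v}\cdot\nabla p_{k+1}\,dK = -\int_K ({\rm div}\,\mathbf{v})\,p_{k+1}\,dK + \int_{\partial K}(\mathbf{v}\cdot\mathbf{n})\,p_{k+1}\,ds.
\]
The boundary integral is computable since $\mathbf{v}|_{\partial K}\in[\B_k(\partial K)]^2$ is fully determined by $\mathbf{D_V1}$ and $\mathbf{D_V2}$. For the volume term, the crucial observation is that ${\rm div}\,\mathbf{v}\in\Pk_{k-1}(K)$ by the very definition of $\mathbf{U}_h^K$, so
\[
\int_K({\rm div}\,\mathbf{v})\,p_{k+1}\,dK = \int_K({\rm div}\,\mathbf{v})\,\Pi^{0,K}_{k-1}p_{k+1}\,dK,
\]
and this reduces to moments of ${\rm div}\,\mathbf{v}$ against $\Pk_{k-1}(K)$. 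The non-constant moments are exactly $\mathbf{D_V4}$, while the constant moment is $\int_K{\rm div}\,\mathbf{v}\,dK=\int_{\partial K}\mathbf{v}\cdot\mathbf{n}\,ds$, again recovered from the boundary DoFs.

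Putting the three pieces together gives $\int_K\mathbf{v}\cdot\mathbf{q}_k\,dK$ for every $\mathbf{q}_k\in[\Pk_k(K)]^2$, hence the projection $\Pi^{0,K}_k\mathbf{v}$ is computable from $\mathbf{D_V}$. The only genuine subtlety is the $\mathcal{G}_k(K)^{\perp}/\mathcal{G}_{k-2}(K)^{\perp}$ part, which is exactly where the enhancement trick (inspired by \cite{VEM-enhanced}) plays its role; everything else is integration by parts combined with the fact that ${\rm div}\,\mathbf{v}$ is polynomial of degree $k-1$.
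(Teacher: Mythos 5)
Your proof is correct and follows essentially the same route as the paper: the same decomposition $\mathbf{q}_k=\nabla q_{k+1}+\mathbf{g}_{k-2}^{\perp}+\mathbf{g}_k^{\perp}$, integration by parts for the gradient part, the $\mathbf{D_V3}$ moments for the $\mathcal{G}_{k-2}(K)^{\perp}$ part, and the enhancement constraint \eqref{eq:V_h^K} together with the computability of $\Pi_k^{\nabla,K}$ for the remaining part. Your explicit remark that the constant moment of ${\rm div}\,\mathbf{v}$ is recovered from the boundary DoFs via the divergence theorem is a small but welcome clarification of a point the paper leaves implicit.
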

\begin{proof}
Let us set
\begin{equation*}
\mathbf{q}_{k} = \nabla q_{k+1} + \mathbf{g}_{k-2}^{\perp} + \mathbf{g}_{k}^{\perp}.
\end{equation*}
with $q_{k+1} \in \Pk_{k+1}(K) / \R$, $\mathbf{g}_{k-2}^{\perp} \in \mathcal{G}_{k-2}^{\perp}(K)$ and $\mathbf{g}_{k}^{\perp} \in \mathcal{G}_{k}^{\perp}(K)/\mathcal{G}_{k-2}^{\perp}(K)$. Therefore using the Green formula and since $\mathbf{v} \in \mathbf{V}_h$, we get 
\[
\begin{split}
\int_K \mathbf{v} \cdot \mathbf{q}_{k} \, {\rm d} K & = \int_K \mathbf{v} \cdot (\nabla q_{k+1} +  \mathbf{g}_{k-2}^{\perp} + \mathbf{g}_{k}^{\perp}) \, {\rm d} K \\
& = - \int_K {\rm div} \, \mathbf{v} \, q_{k+1} \, {\rm d} K  +
\int_K \mathbf{v} \cdot  \mathbf{g}_{k-2}^{\perp} \, {\rm d} K +
\int_K \Pi_k^{\nabla, K}\, \mathbf{v} \cdot  \mathbf{g}_{k}^{\perp} \, {\rm d} K  
+ \int_{\partial K} q_{k+1} \, \mathbf{v} \cdot \mathbf{n} \, {\rm d} s.
\end{split}
\]
Now, since ${\rm div} \, \mathbf{v}$ is a polynomial of degree less or equal than $k-1$ we can reconstruct its value from $\mathbf{D_V4}$ and compute exactly the first term. The second term is computable from $\mathbf{D_V3}$. The third term is computable from all the $\mathbf{D_V}$ using the projection $\Pi_k^{\nabla, K}\, \mathbf{v}$. Finally from $\mathbf{D_V1}$ and $\mathbf{D_V2}$ we can reconstruct $\mathbf{v}$ on the boundary and so compute exactly the boundary term. 
\end{proof}
For what concerns the pressures we take the standard finite dimensional space
\begin{equation}
\label{eq:Q_h^K}
Q_h^K := \Pk_{k-1}(K) 
\end{equation}
having dimension
\begin{equation*}
\dim(Q_h^K) = \dim(\Pk_{k-1}(K))  = \frac{(k+1)k}{2}.
\end{equation*}
The corresponding degrees of freedom are chosen defining for each $q\in Q_h^K$ the following linear operators $\mathbf{D_Q}$:
\begin{itemize}
\item $\mathbf{D_Q}$: the moments up to order $k-1$ of $q$, i.e.
\[
\int_K q \, p_{k-1} \, {\rm d}K \qquad \text{for all $p_{k-1} \in \Pk_{k-1}(K)$.}
\]
\end{itemize}
Finally we define the global virtual element spaces as
\begin{equation}
\label{eq:V_h}
\mathbf{V}_h := \{ \mathbf{v} \in [H^1(\Omega)]^2  \quad \text{s.t} \quad \mathbf{v} \cdot \mathbf{n} = 0 \quad \text{on $\partial \Omega$} \quad \text{and} \quad \mathbf{v}_{|K} \in \mathbf{V}_h^K  \quad \text{for all $K \in \mathcal{T}_h$} \}
\end{equation} 
and
\begin{equation}
\label{eq:Q_h}
Q_h := \{ q \in L_0^2(\Omega) \quad \text{s.t.} \quad q_{|K} \in  Q_h^K \quad \text{for all $K \in \mathcal{T}_h$}\},
\end{equation}
with the obvious associated sets of global degrees of freedom. A simple computation shows that:
\begin{equation*}
\dim(\mathbf{V}_h) = n_P \left( \frac{(k+1)k}{2} -1  +  \frac{(k-1)(k-2)}{2} \right) 
+ 2(n_V + (k-1) n_E) + (n_{V, B} + (k-1) n_{E, B})
\end{equation*}
and
\begin{equation*}
\dim(Q_h) = n_P \frac{(k+1)k}{2} - 1 ,
\end{equation*}
where $n_P$ is the number of elements, $n_E$, $n_V$ (resp., $n_{E, B}$, $n_{V, B}$) is the number of  internal edges and vertexes (resp., boundary edges and vertexes) in $\mathcal{T}_h$.
As observed in \cite{stokes}, we remark that
\begin{equation}\label{eq:divfree}
{\rm div}\, \mathbf{V}_h\subseteq Q_h .
\end{equation}
\begin{remark}
By definition \eqref{eq:V_h} it is clear that our discrete velocities field is $H^1$-conforming, in particular we obtain continuous velocities, whereas the natural discretization is only $H({\rm div})$- conforming. This property, in combination with \eqref{eq:divfree}, will make our method suitable for a (robust) extension to the Brinkman problem.
\end{remark}
%


\subsection{The discrete bilinear forms}
\label{sub:3.3}

The next step in the construction of our method is to define on the virtual spaces $\mathbf{V}_h$ and $Q_h$ a discrete version of the bilinear forms $a(\cdot, \cdot)$ and $b(\cdot, \cdot)$ given in \eqref{eq:forma a} and \eqref{eq:forma b}.
For simplicity we assume that the tensor $\K$ is piecewise constant with respect to the decomposition $\mathcal{T}_h$, i.e. $\K$ is constant on each polygon $K \in \mathcal{T}_h$.
First of all we decompose into local contributions the bilinear forms $a(\cdot,\cdot)$ and $b(\cdot, \cdot)$, the norms $\|\cdot\|_{\mathbf{V}}$ and $\|\cdot \|_Q$ by defining 
\begin{equation*}
a (\mathbf{u},  \mathbf{v}) =: \sum_{K \in \mathcal{T}_h} a^K (\mathbf{u},  \mathbf{v}) \qquad \text{for all $\mathbf{u},  \mathbf{v} \in \mathbf{V}$}
\end{equation*}
\begin{equation*}
b (\mathbf{v},  q) =: \sum_{K \in \mathcal{T}_h} b^K (\mathbf{v},  q) \qquad \text{for all $\mathbf{v} \in \mathbf{V}$ and $q \in Q$,}
\end{equation*}
and 
\begin{equation*}
\|\mathbf{v}\|_{\mathbf{V}} =: \left(\sum_{K \in \mathcal{T}_h} \|\mathbf{v}\|^2_{\mathbf{V}, K}\right)^{1/2} \quad \text{for all $\mathbf{v} \in \mathbf{V}$,} \qquad \|q\|_Q =: \left(\sum_{K \in \mathcal{T}_h} \|q\|^2_{Q, K}\right)^{1/2} \quad \text{for all $q \in Q$.}
\end{equation*}
We now define discrete versions of the bilinear form $a(\cdot, \cdot)$ (cf.~\eqref{eq:forma a}), and of the bilinear form $b(\cdot, \cdot)$ (cf.~\eqref{eq:forma b}). For what concerns $b(\cdot, \cdot)$, we simply  set 

\begin{equation}\label{bhform}
b(\mathbf{v}, q) = \sum_{K \in \mathcal{T}_h} b^K(\mathbf{v}, q) = \sum_{K \in \mathcal{T}_h}  \int_K {\rm div} \, \mathbf{v} \, q \,{\rm d}K \qquad \text{for all $\mathbf{v} \in \mathbf{V}_h$, $q \in Q_h$},
\end{equation}
i.e. as noticed in \cite{stokes} we do not introduce any approximation of the bilinear form. We notice that~\eqref{bhform}  
is computable from the degrees of freedom $\mathbf{D_V1}$, $\mathbf{D_V2}$ and $\mathbf{D_V4}$, since $q$ is polynomial in each element $K \in \mathcal{T}_h$. 
On the other hand, the bilinear form $a(\cdot, \cdot)$ needs to be dealt with in a more careful way.
First of all, by Proposition \ref{prp:pkprojection}, we observe that  for all $\mathbf{q}_k \in [\Pk_k(K)]^2 $ and for all $\mathbf{v}\in \mathbf{V}_h^K $, the quantity  
\begin{equation*}
a^K (\mathbf{q}_k,  \mathbf{v}) = \int_{K} \K^{-1} \,  \mathbf{q}_k \cdot \mathbf{v} \,{\rm d}K.
\end{equation*}
is exactly computable by the DoFs.
However, for an arbitrary pair $(\mathbf{u},\mathbf{v} )\in \mathbf{V}_h^K \times \mathbf{V}_h^K $, the quantity $a_h^K(\mathbf{w}, \mathbf{v})$ is clearly not computable.  
In the standard procedure of VEM framework, we define a computable discrete local bilinear form
\begin{equation}
\label{eq:a_h^K} 
a_h^K(\cdot, \cdot) \colon \mathbf{V}_h^K \times \mathbf{V}_h^K \to \R
\end{equation}
approximating the continuous form $a^K(\cdot, \cdot)$ and satisfying the following properties:
\begin{itemize}
\item $\mathbf{k}$\textbf{-consistency}: for all $\mathbf{q}_k \in [\Pk_k(K)]^2$ and $\mathbf{v}_h \in \mathbf{V}_h^K$
\begin{equation}\label{eq:consist}
a_h^K(\mathbf{q}_k, \mathbf{v}_h) = a^K( \mathbf{q}_k, \mathbf{v}_h);
\end{equation}
\item \textbf{stability}:  there exist  two positive constants $\alpha_*$ and $\alpha^*$, independent of $h$ and $K$, such that, for all $\mathbf{v}_h \in \mathbf{V}_h^K$, it holds
\begin{equation}\label{eq:stabk}
\alpha_* a^K(\mathbf{v}_h, \mathbf{v}_h) \leq a_h^K(\mathbf{v}_h, \mathbf{v}_h) \leq \alpha^* a^K(\mathbf{v}_h, \mathbf{v}_h).
\end{equation}
\end{itemize}
Let $\mathcal{R}^K \colon \mathbf{V}_h^K \times \mathbf{V}_h^K \to \R$ be a (symmetric) stabilizing bilinear form, satisfying
\begin{equation}
\label{eq:R^K}
c_* a^K(\mathbf{v}_h, \mathbf{v}_h) \leq  \mathcal{R}^K(\mathbf{v}_h, \mathbf{v}_h) \leq c^* a^K(\mathbf{v}_h, \mathbf{v}_h) \qquad \text{for all $\mathbf{v}_h \in \mathbf{V}_h$ such that ${\Pi}_{k}^{0,K} \mathbf{v}_h= \mathbf{0}$}
\end{equation}
with $c_*$ and $c^*$  positive  constants independent of $h$ and $K$.
Then, we can set
\begin{equation}
\label{eq:a_h^K def}
a_h^K(\mathbf{u}_h, \mathbf{v}_h) := a^K \left({\Pi}_{k}^{0,K} \mathbf{u}_h, {\Pi}_{k}^{0,K} \mathbf{v}_h \right) + \mathcal{R}^K \left((I -{\Pi}_{k}^{0,K}) \mathbf{u}_h, (I -{\Pi}_{k}^{0,K}) \mathbf{v}_h \right)
\end{equation}
for all $\mathbf{u}_h, \mathbf{v}_h \in \mathbf{V}_h^K$.

It is straightforward to check that Definition~\eqref{eq:Pi_k^K} and properties~\eqref{eq:R^K} imply the consistency and the stability of the bilinear form $a_h^K(\cdot, \cdot)$.

\begin{remark}
In the construction of the stabilizing form $\mathcal{R}^K$ with condition \eqref{eq:R^K}  we essentially require that the stabilizing term $\mathcal{R}^K(\mathbf{v}_h, \mathbf{v}_h)$ scales as $a^K(\mathbf{v}_h, \mathbf{v}_h)$. 
Following the standard VEM technique (cf. \cite{VEM-volley, VEM-hitchhikers} for more details), denoting with $\bar{\mathbf{u}}_h$, $\bar{\mathbf{v}}_h \in \R^{N_K}$
the vectors containing the values of the $N_K$ local degrees of freedom associated to $\mathbf{u}_h, \mathbf{v}_h \in \mathbf{V}_h^K$, we set
\[
\mathcal{R}^K (\mathbf{u}_h, \mathbf{v}_h) =  \alpha^K \,  \bar{\mathbf{u}}_h^T \bar{\mathbf{v}}_h ,
\] 
where $\alpha^K$ is a suitable positive constant that scales as $|K|$. For example, in the numerical tests presented in Section \ref{sec:7}, we have chosen $\alpha^K$ as the mean value of the eigenvalues of the matrix stemming from the  
term $a^K \left({\Pi}_{k}^{0,K} \mathbf{u}_h,\,  {\Pi}_{k}^{0,K} \mathbf{v}_h \right) $ in  \eqref{eq:a_h^K def}.
\end{remark}

Finally we define the global approximated bilinear form $a_h(\cdot, \cdot) \colon \mathbf{V}_h \times \mathbf{V}_h \to \R$ by simply summing the local contributions:
\begin{equation}
\label{eq:a_h}
a_h(\mathbf{u}_h, \mathbf{v}_h) := \sum_{K \in \mathcal{T}_h}  a_h^K(\mathbf{u}_h, \mathbf{v}_h) \qquad \text{for all $\mathbf{u}_h, \mathbf{v}_h \in \mathbf{V}_h$.}
\end{equation}


\subsection{The discrete problem}\label{sec:discrete}
\label{sub:3.4}
We are now ready to state the proposed discrete problem. Referring to~\eqref{eq:V_h}, \eqref{eq:Q_h},  ~\eqref{bhform}, and  ~\eqref{eq:a_h}  we consider the \textbf{virtual element problem}:
\begin{equation}
\label{eq:darcy virtual}
\left\{
\begin{aligned}
& \text{find $(\mathbf{u}_h, p_h) \in \mathbf{V}_h \times Q_h$, such that} \\
& a_h(\mathbf{u}_h, \mathbf{v}_h) + b(\mathbf{v}_h, p_h) = 0 \qquad & \text{for all $\mathbf{v}_h \in \mathbf{V}_h$,} \\
&  b(\mathbf{u}_h, q_h) = (f, q_h) \qquad & \text{for all $q_h \in Q_h$.}
\end{aligned}
\right.
\end{equation}

We point out that the symmetry of $a_h(\cdot, \cdot)$ together with \eqref{eq:stabk}  easily implies that $a_h(\cdot, \cdot)$  is  (uniformly) continuous with respect to the  $L^2$ norm. 
Moreover, as observed in \cite{stokes}, introducing the discrete kernel:
\begin{equation*}
\mathbf{Z}_h := \{ \mathbf{v}_h \in \mathbf{V}_h \quad \text{s.t.} \quad b(\mathbf{v}_h, q_h) = 0 \quad \text{for all $q_h \in Q_h$}\},
\end{equation*}
it is immediate to check that
\begin{equation*}
\mathbf{Z}_h \subseteq \mathbf{Z} .
\end{equation*}
Then the bilinear form $a_h(\cdot, \cdot)$ is also uniformly coercive on the discrete kernel $\mathbf{Z}_h$ with respect to the  $\mathbf{V}$ norm.
%
%
Moreover as a direct consequence of Proposition 4.3 in \cite{stokes}, we have the following stability result.
\begin{proposition}
\label{thm2} Given the discrete spaces
$\mathbf{V}_h$ and $Q_h$ defined in~\eqref{eq:V_h} and~\eqref{eq:Q_h}, there exists a positive $\tilde{\beta}$, independent of $h$, such that:
\begin{equation}
\label{eq:inf-sup discreta}
\sup_{\mathbf{v}_h \in \mathbf{V}_h \, \mathbf{v}_h \neq \mathbf{0}} \frac{b(\mathbf{v}_h, q_h)}{ \|\mathbf{v}_h\|_{\mathbf{V}}} \geq \tilde{\beta} \|q_h\|_Q \qquad \text{for all $q_h \in Q_h$.}
\end{equation}
\end{proposition}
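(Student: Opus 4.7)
The plan is to verify \eqref{eq:inf-sup discreta} by the standard Fortin-operator strategy: I will construct a linear operator $\Pi_F\colon [H^1_0(\Omega)]^2 \to \mathbf{V}_h$ satisfying
(F1) $b(\Pi_F\mathbf{v} - \mathbf{v},\,q_h) = 0$ for every $q_h \in Q_h$, and
(F2) $\|\Pi_F\mathbf{v}\|_{\mathbf{V}} \le C\,\|\mathbf{v}\|_{[H^1(\Omega)]^2}$ with $C$ independent of $h$.
Once $\Pi_F$ is available, the conclusion follows in one line: given $q_h \in Q_h \subseteq L^2_0(\Omega)$, the continuous surjectivity of the divergence (Bogovskii) produces $\mathbf{v}\in[H^1_0(\Omega)]^2$ with ${\rm div}\,\mathbf{v} = q_h$ and $\|\mathbf{v}\|_{[H^1(\Omega)]^2}\le C\|q_h\|_Q$; then $\Pi_F\mathbf{v}\in\mathbf{V}_h$, (F1) gives $b(\Pi_F\mathbf{v},q_h) = \|q_h\|_Q^2$, and (F2) gives $\|\Pi_F\mathbf{v}\|_{\mathbf{V}} \le C\|q_h\|_Q$, which together deliver the discrete inf-sup with $\tilde{\beta}=1/C$.

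For the construction of $\Pi_F$ I exploit the key observation that $\mathbf{V}_h^K$ shares the same degree-of-freedom set $\mathbf{D_V}$ as the Stokes space $\mathbf{W}_h^K$ of \cite{stokes} (Proposition~\ref{prop:enhanced}). Elementwise, I define $\Pi_F\mathbf{v}\in\mathbf{V}_h$ by prescribing its DoFs as follows: the boundary values $\mathbf{D_V1}$ and $\mathbf{D_V2}$ are obtained from an $H^1$-stable edgewise interpolant of $\mathbf{v}|_{\partial K}$ at the Gauss-Lobatto nodes (a Scott-Zhang type smoothing composed with nodal evaluation, so that the map is well defined on $[H^1(\Omega)]^2$ rather than on continuous functions); the vector moments $\mathbf{D_V3}$ are set equal to the moments of $\mathbf{v}$ against $\mathcal{G}_{k-2}(K)^{\perp}$; and the divergence moments $\mathbf{D_V4}$ are set equal to the moments of ${\rm div}\,\mathbf{v}$ against $\Pk_{k-1}(K)/\R$. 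Property (F1) is then immediate: on each $K$, the moments of ${\rm div}(\Pi_F\mathbf{v})$ against $\Pk_{k-1}(K)/\R$ match those of ${\rm div}\,\mathbf{v}$ by construction of $\mathbf{D_V4}$, while the zeroth-order (constant) component matches because the divergence theorem and the flux-preserving property of the edge smoother give $\int_K {\rm div}(\Pi_F\mathbf{v})\,{\rm d}K = \int_{\partial K}\Pi_F\mathbf{v}\cdot\mathbf{n}\,{\rm d}s = \int_{\partial K}\mathbf{v}\cdot\mathbf{n}\,{\rm d}s = \int_K {\rm div}\,\mathbf{v}\,{\rm d}K$.

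The main obstacle is the stability estimate (F2). Using unisolvence of $\mathbf{D_V}$ on $\mathbf{V}_h^K$ together with a scaling argument on star-shaped polygons (granted by assumption $\mathbf{(A1)}$), $\|\Pi_F\mathbf{v}\|_{\mathbf{V},K}$ is controlled by a suitably scaled $\ell^2$ norm of its DoFs, so it suffices to bound each type of DoF in turn: the boundary DoFs are dominated by $\|\mathbf{v}\|_{H^1(K^*)}$ on a small neighborhood $K^*\supset K$ via the $H^1$-stability of the Scott-Zhang-type operator combined with the trace inequality on edges; the $\mathbf{D_V3}$ moments by $\|\mathbf{v}\|_{L^2(K)}$ via duality against the polynomials of $\mathcal{G}_{k-2}(K)^{\perp}$; and the $\mathbf{D_V4}$ moments by $\|{\rm div}\,\mathbf{v}\|_{L^2(K)}$ by the same duality argument. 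Summation over $K\in\mathcal{T}_h$, together with the finite-overlap property of the patches $K^*$, yields (F2). This is precisely the argument carried out in Proposition 4.3 of \cite{stokes} for $\mathbf{W}_h$; it transfers verbatim to $\mathbf{V}_h$ because the reasoning only uses the DoF set $\mathbf{D_V}$, the inclusion ${\rm div}\,\mathbf{V}_h \subseteq Q_h$ recalled in \eqref{eq:divfree}, and the polynomial inclusion $[\Pk_k(K)]^2\subseteq\mathbf{V}_h^K$, all of which are preserved by the enhanced space.
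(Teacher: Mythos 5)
Your argument is correct and coincides with the paper's: the paper proves this proposition simply by invoking Proposition 4.3 of \cite{stokes}, whose content is exactly the Fortin-operator construction you reconstruct, and the transfer to $\mathbf{V}_h$ rests, as you say, on the fact that the enhanced space keeps the same DoFs $\mathbf{D_V}$, together with ${\rm div}\,\mathbf{V}_h\subseteq Q_h$ and $[\Pk_k(K)]^2\subseteq\mathbf{V}_h^K$. The only point to treat with care is that a plain Scott--Zhang smoother does not by itself preserve the edge fluxes $\int_e \mathbf{v}\cdot\mathbf{n}\,{\rm d}s$ needed for the constant component of (F1), so one must add the standard per-edge flux correction (as is done in \cite{stokes}); this does not affect the uniform stability bound.
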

In particular, the  the inf-sup condition of Proposition~\ref{thm2}, along with property~\eqref{eq:divfree}, implies that:
\begin{equation*}
{\rm div}\, \mathbf{V}_h = Q_h .
\end{equation*}
Finally we can state the well-posedness of virtual problem \eqref{eq:darcy virtual}.
\begin{theorem}
Problem \eqref{eq:darcy virtual} has a unique solution $(\mathbf{u}_h, p_h) \in \mathbf{V}_h \times Q_h$, verifying the estimate
\[
\|\mathbf{u}_h\|_{\mathbf{V}} + \|p_h\|_{Q} \leq C \|f\|_{0}.
\]
\end{theorem}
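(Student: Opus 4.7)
The plan is to apply the classical Brezzi theory for mixed/saddle-point problems, since all the essential ingredients have already been assembled in the preceding paragraphs. Concretely, I need to verify three hypotheses: continuity of the bilinear forms $a_h(\cdot,\cdot)$ and $b(\cdot,\cdot)$ on the discrete spaces, coercivity of $a_h(\cdot,\cdot)$ on the discrete kernel $\mathbf{Z}_h$, and the discrete inf-sup condition for $b(\cdot,\cdot)$.

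The first step is the verification of continuity. The form $b(\cdot,\cdot)$ in the discrete problem is literally the restriction of the continuous form from \eqref{eq:forma b}, so continuity with constant $\|b\|$ is inherited. For $a_h(\cdot,\cdot)$, the upper bound in the stability property \eqref{eq:stabk} together with continuity of $a^K(\cdot,\cdot)$ (which is simply the Cauchy--Schwarz bound in the weighted $L^2$-inner product defined by $\K^{-1}$) gives a uniform continuity bound in the $[L^2(\Omega)]^2$-norm, and hence a fortiori in the $\mathbf{V}$-norm. This is the content of the remark made just before the theorem.

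The second step is coercivity on $\mathbf{Z}_h$. Here I exploit the crucial inclusion $\mathbf{Z}_h \subseteq \mathbf{Z}$, which holds because $\mathrm{div}\, \mathbf{V}_h \subseteq Q_h$ (see \eqref{eq:divfree}): indeed, for $\mathbf{v}_h \in \mathbf{Z}_h$ one may test $b(\mathbf{v}_h, q_h)=0$ against $q_h = \mathrm{div}\,\mathbf{v}_h \in Q_h$ to conclude $\mathrm{div}\,\mathbf{v}_h = 0$ pointwise. On this kernel the graph norm reduces to the $L^2$-norm, and combining the lower stability bound in \eqref{eq:stabk} with the continuous coercivity \eqref{eq:coercive} yields
\begin{equation*}
a_h(\mathbf{v}_h,\mathbf{v}_h) \;\geq\; \alpha_* \, a(\mathbf{v}_h,\mathbf{v}_h) \;\geq\; \alpha_* \, \alpha \, \|\mathbf{v}_h\|_{\mathbf{V}}^2 \qquad \text{for all } \mathbf{v}_h \in \mathbf{Z}_h,
\end{equation*}
with the constant independent of $h$.

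The third step is the discrete inf-sup condition, which is exactly the statement of Proposition \ref{thm2}. Having all three Brezzi hypotheses in hand with constants independent of $h$, the abstract saddle-point theory (see e.g.\ \cite{fortin1991mixed}) immediately yields existence and uniqueness of $(\mathbf{u}_h,p_h) \in \mathbf{V}_h \times Q_h$ solving \eqref{eq:darcy virtual}, together with the a priori bound
\begin{equation*}
\|\mathbf{u}_h\|_{\mathbf{V}} + \|p_h\|_{Q} \;\leq\; C \, \|f\|_{0},
\end{equation*}
with $C$ depending only on $\alpha_*$, $\alpha$, $\alpha^*$, $\tilde\beta$, $\|b\|$ and $\Omega$. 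There is no real obstacle: the substantive work (construction of $\mathbf{V}_h$ ensuring $\mathbf{Z}_h\subseteq \mathbf{Z}$, and the inf-sup of Proposition \ref{thm2}) was already carried out in the previous subsections, and this theorem is essentially a bookkeeping step.
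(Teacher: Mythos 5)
Your proof is correct and follows exactly the route the paper intends: the paper states no explicit proof, but the paragraphs immediately preceding the theorem establish precisely your three ingredients (uniform continuity of $a_h$, coercivity on $\mathbf{Z}_h$ via the inclusion $\mathbf{Z}_h\subseteq\mathbf{Z}$, and the discrete inf-sup of Proposition \ref{thm2}), after which the classical saddle-point theory is invoked. Your additional observation that $\mathrm{div}\,\mathbf{v}_h$ has zero mean (so it is a legitimate test function in $Q_h$) is a correct and worthwhile detail the paper leaves implicit.
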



\section{Theoretical results}
\label{sec:4}

We begin by proving an approximation result for the virtual local space $\mathbf{V}_h$. First of all, let us recall a classical result by Brenner-Scott (see \cite{MR2373954}).

\begin{lemma}
\label{lm:scott}
Let $K \in \mathcal{T}_h$, then for all $\mathbf{u} \in [H^{s+1}(K)]^2$ with $0 \leq s \leq k$, there exists a  polynomial function $\mathbf{u}_{\pi} \in [{\Pk}_k(K)]^2$,  such that
\begin{equation}
\label{eq:scott}
\|\mathbf{u} - \mathbf{u}_{\pi}\|_{0,K} + h_K |\mathbf{u} -\mathbf{u}_{\pi} |_{1,K} \leq C h_K^{s+1}| \mathbf{u}|_{s+1,K}.
\end{equation}
\end{lemma}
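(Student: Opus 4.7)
The plan is to construct $\mathbf{u}_\pi$ component-wise via the averaged (Sobolev) Taylor polynomial of degree $k$, and then invoke the Dupont--Scott/Bramble--Hilbert machinery on star-shaped domains. Since assumption $\mathbf{(A1)}$ guarantees that $K$ is star-shaped with respect to a ball $B\subset K$ of radius $\ge \gamma h_K$, the averaged Taylor polynomial $Q^{k+1}u$ of each scalar component $u$ of $\mathbf{u}$ is well defined and belongs to $\Pk_k(K)$. I would then set $\mathbf{u}_\pi=(Q^{k+1}u_1,Q^{k+1}u_2)$.

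First I would recall the key property of the averaged Taylor polynomial, namely that $Q^{k+1}$ is a bounded projection from $H^{s+1}(K)$ onto $\Pk_k(K)$ that reproduces polynomials of degree $\le k$. I would then invoke the standard Bramble--Hilbert estimate (Lemma 4.3.8 in Brenner--Scott):
\[
|u-Q^{k+1}u|_{m,K} \le C(k,\gamma)\, h_K^{s+1-m}\, |u|_{s+1,K}, \qquad 0\le m \le s+1,
\]
for every $u\in H^{s+1}(K)$, where the constant depends only on $k$, $m$, and the chunkiness parameter $\gamma$ from $\mathbf{(A1)}$. Choosing $m=0$ and $m=1$, summing over the two components of $\mathbf{u}$, and collecting terms yields \eqref{eq:scott} directly.

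The core of the argument is a scaling/Bramble--Hilbert step on a reference star-shaped domain $\widehat K$ of unit diameter: one proves the polynomial-free seminorm inequality $|\widehat u - Q^{k+1}\widehat u|_{m,\widehat K}\le C|\widehat u|_{s+1,\widehat K}$ using a compactness argument together with the fact that $Q^{k+1}$ fixes $\Pk_k$, and then transports it back to $K$ through an affine change of variables (or, more precisely, through a dilation, since $K$ itself is not affinely equivalent to $\widehat K$ in general). The scaling produces exactly the factor $h_K^{s+1-m}$ required.

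The only subtle point is keeping the constant $C$ independent of the element $K$: this relies on the fact that, under $\mathbf{(A1)}$, the ratio between $h_K$ and the radius of the inscribed ball $B$ is uniformly bounded, so the chunkiness parameter entering the Bramble--Hilbert estimate on the reference configuration is uniformly controlled across the family $\{\mathcal{T}_h\}_h$. Beyond this, the argument is entirely standard and I would simply cite \cite{MR2373954} for the detailed technicalities.
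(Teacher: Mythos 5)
Your proposal is correct and is essentially the same argument the paper relies on: the paper gives no proof of Lemma \ref{lm:scott} and simply cites it as a classical result from Brenner--Scott \cite{MR2373954}, and your sketch (component-wise averaged Taylor polynomial on the star-shaped element, Bramble--Hilbert estimate with constant controlled by the chunkiness parameter from $\mathbf{(A1)}$, scaling to produce the factor $h_K^{s+1-m}$) is precisely the standard proof found in that reference.
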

We have the following approximation results (for the  proof see  \cite{navier-stokes}).
\begin{proposition}
\label{thm:interpolation}
Let $\mathbf{u} \in \mathbf{V} \cap [H^{s+1}(\Omega)]^2$ with $0 \leq s \leq k$. Under the assumption $\mathbf{(A1)}$ and $\mathbf{(A2)}$ 
on the decomposition $\mathcal{T}_h$, there exists $\mathbf{u}_{int} \in \mathbf{W}_h$  such that
\begin{equation*}
\|\mathbf{u} - \mathbf{u}_{int}\|_{0} +  h_K |\mathbf{u} -\mathbf{u}_{int} |_{1,K} \leq C h_K^{s+1}| \mathbf{u}|_{s+1,K}.
\end{equation*}
where $C$ is a constant independent of $h$.
\end{proposition}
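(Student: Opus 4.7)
The plan is to follow the classical VEM strategy: define a local interpolant $\mathbf{u}_{int}^K \in \mathbf{V}_h^K$ by prescribing its degrees of freedom $\mathbf{D_V}$ to coincide with those of $\mathbf{u}$ (this is well-posed by Proposition \ref{prop:enhanced}), glue these local interpolants into a global $\mathbf{u}_{int} \in \mathbf{V}_h$ (consistency of the boundary DoFs across edges guarantees $H^1$-conformity and hence membership in $\mathbf{V}_h$), and then obtain the estimate locally on each $K$ via a Bramble--Hilbert / triangle-inequality argument combined with Lemma \ref{lm:scott}.

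Concretely, on each $K$ I would pick the polynomial approximation $\mathbf{u}_\pi \in [\Pk_k(K)]^2$ provided by Lemma \ref{lm:scott} and write
\[
\mathbf{u} - \mathbf{u}_{int} = (\mathbf{u} - \mathbf{u}_\pi) - (\mathbf{u}_{int} - \mathbf{u}_\pi).
\]
Since $\mathbf{u}_\pi \in [\Pk_k(K)]^2 \subseteq \mathbf{V}_h^K$ and the interpolation operator is a projection onto $\mathbf{V}_h^K$ (it fixes any element already in $\mathbf{V}_h^K$, because $\mathbf{D_V}$ is unisolvent), we have $\mathbf{u}_{int} - \mathbf{u}_\pi = (\mathbf{u} - \mathbf{u}_\pi)_{int}$; that is, it is the VEM interpolant of the error $\mathbf{u}-\mathbf{u}_\pi$. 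The first piece $\|\mathbf{u}-\mathbf{u}_\pi\|_{0,K}+h_K|\mathbf{u}-\mathbf{u}_\pi|_{1,K}$ is bounded by $Ch_K^{s+1}|\mathbf{u}|_{s+1,K}$ directly from \eqref{eq:scott}. For the second piece, the task reduces to proving the stability of the interpolation operator, i.e. an estimate of the form
\[
\|\mathbf{w}_{int}\|_{0,K} + h_K |\mathbf{w}_{int}|_{1,K} \leq C \bigl(\|\mathbf{w}\|_{0,K} + h_K |\mathbf{w}|_{1,K}\bigr)
\qquad \forall\, \mathbf{w}\in [H^1(K)]^2,
\]
with a constant $C$ depending only on the shape-regularity constants $\gamma,c$ of assumptions $\mathbf{(A1)}$–$\mathbf{(A2)}$.

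The heart of the argument—and the main technical obstacle—is this last stability bound. Unlike standard finite elements one cannot rely on an explicit basis; instead, one bounds the $L^2$- and $H^1$-norms of $\mathbf{w}_{int} \in \mathbf{V}_h^K$ by a suitably scaled norm of its degrees of freedom $\mathbf{D_V}(\mathbf{w}_{int}) = \mathbf{D_V}(\mathbf{w})$, and then bounds the DoFs in terms of $\|\mathbf{w}\|_{0,K}$ and $|\mathbf{w}|_{1,K}$ through trace and Poincar\'e-type inequalities on star-shaped domains. The first step requires a scaling/reference-element argument on the polygon $K$: for any $\mathbf{v}_h\in\mathbf{V}_h^K$ one has
\[
\|\mathbf{v}_h\|_{0,K}^2 + h_K^2|\mathbf{v}_h|_{1,K}^2 \leq C\, h_K^2\, \|\mathbf{D_V}(\mathbf{v}_h)\|_{\ell^2}^2,
\]
after a suitable normalization of the DoFs so that each one scales like the corresponding $L^2$-pairing. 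This bound is the delicate point because $\mathbf{V}_h^K$ is defined implicitly and varies with $K$; the argument proceeds by a compactness/continuity step on a reference configuration together with the standard scaling of each block of DoFs $\mathbf{D_V1}$--$\mathbf{D_V4}$.

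Finally, combining the two pieces via the triangle inequality and summing over $K\in\mathcal{T}_h$ yields the global estimate. I would expect this to reproduce verbatim the argument in \cite{navier-stokes} referenced by the author, the only novelty being the verification that the enhancement condition in \eqref{eq:V_h^K} is stable under the reference-to-physical mapping, which follows from the fact that $\Pi_k^{\nabla,K}$ is bounded by the DoFs with the same scaling used above.
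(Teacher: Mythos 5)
The paper does not actually prove this proposition; it defers to \cite{navier-stokes}, where the argument exploits the PDE characterization of the space \eqref{eq:W_h^K}: the interpolant is \emph{constructed} as the solution of the local Stokes-type problem whose data are a boundary interpolant of $\mathbf{u}_{|\partial K}$, the projected divergence, and a projected momentum datum, and the estimate then follows from the uniform a priori stability of the Stokes problem on star-shaped domains together with boundary approximation estimates. Your route (DoF-matching interpolant, triangle inequality with $\mathbf{u}_\pi$ from Lemma \ref{lm:scott}, reduction to stability of the interpolation operator) is the other standard strategy and is correct in outline, but the step you yourself identify as the heart of the matter is not justified by the argument you offer. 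A ``compactness/continuity step on a reference configuration'' is not available for general polygonal meshes: there is no reference element, the number of edges of $K$ is unbounded, and the family of shapes satisfying $\mathbf{(A1)}$--$\mathbf{(A2)}$ does not form a compact set over which a norm-equivalence constant can be extracted by continuity. Establishing
\[
\|\mathbf{v}_h\|_{0,K}^2 + h_K^2|\mathbf{v}_h|_{1,K}^2 \leq C\, h_K^2\, \|\mathbf{D_V}(\mathbf{v}_h)\|_{\ell^2}^2
\]
uniformly in $K$ is precisely the hard stability question treated in \cite{2016stability} by direct trace, lifting and inverse estimates, and for the implicitly defined Stokes-type spaces used here it does not follow ``off the shelf''; as written, this is a genuine gap rather than a routine verification.

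Two further points. First, for $s=0$ the function $\mathbf{u}-\mathbf{u}_\pi$ is only in $[H^1(K)]^2$, so the pointwise degrees of freedom $\mathbf{D_V1}$--$\mathbf{D_V2}$ are not continuous functionals and your DoF-matching interpolant is not even well defined at the lowest regularity covered by the statement; one needs either a quasi-interpolant of the trace or a density argument. Second, the statement asserts $\mathbf{u}_{int}\in\mathbf{W}_h$, whereas you build the interpolant in $\mathbf{V}_h^K$; this is harmless for the later use of the result but does not match the claim as stated. The PDE-based construction of \cite{navier-stokes} sidesteps the DoF-stability issue entirely, which is why it is the route taken in the literature for these spaces.
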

For what concerns the pressures, from classic polynomial approximation theory \cite{MR2373954}, for  $q \in H^k(\Omega)$ it holds
\begin{equation}
\label{eq: stime q_I}
\inf_{ \mathbf{q}_h\in \mathbf{Q}_h }\|q - q_h \|_{Q} \leq C \, h^k \, |q|_k.
\end{equation}
We are ready to state the following convergence theorem.
\begin{theorem}
\label{thm5}
Let $(\mathbf{u}, p) \in \mathbf{V} \times Q$ be the solution of problem \eqref{eq:darcy variazionale} and $(\mathbf{u}_h, p_h) \in \mathbf{V}_h \times Q_h$ be the solution of problem \eqref{eq:darcy virtual}. Then it holds
\[
\begin{gathered}
\|\mathbf{u} - \mathbf{u}_h\|_{0} \leq C \, h^{k+1} \, |\mathbf{u}|_{k+1} , \qquad \text{and} \qquad \|\mathbf{u} - \mathbf{u}_h\|_{\mathbf{V}} \leq C \, h^{k} \, |\mathbf{u}|_{k+1} , \\
\|p - p_h\|_{Q} \leq C \, h^{k} ( |\mathbf{u}|_{k+1} + |p|_{k}).
\end{gathered}
\]  
\end{theorem}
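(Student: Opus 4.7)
The strategy is a standard mixed-VEM convergence argument, but it exploits the two key structural properties $\mathbf{Z}_h\subseteq\mathbf{Z}$ and ${\rm div}\,\mathbf{V}_h=Q_h$ to obtain pressure robustness. The first step is to introduce the interpolant $\mathbf{u}_I\in\mathbf{V}_h$ given by Proposition \ref{thm:interpolation} and to observe that it satisfies the commuting-diagram relation ${\rm div}\,\mathbf{u}_I=\Pi^{0,K}_{k-1}({\rm div}\,\mathbf{u})$ on every $K$; this comes from the fact that the boundary dofs together with $\mathbf{D_V4}$ uniquely determine ${\rm div}\,\mathbf{v}$ in $\Pk_{k-1}(K)$ for $\mathbf{v}\in\mathbf{V}_h^K$, and is by now standard for this VEM family (see \cite{navier-stokes}). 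Because ${\rm div}\,\mathbf{u}=f$, this yields $b(\mathbf{u}_I,q_h)=(f,q_h)$ for every $q_h\in Q_h$, so that $\mathbf{w}_h:=\mathbf{u}_h-\mathbf{u}_I$ belongs to $\mathbf{Z}_h\subseteq\mathbf{Z}$ and in particular $\|\mathbf{w}_h\|_{\mathbf{V}}=\|\mathbf{w}_h\|_0$.

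Next I would apply the coercivity of $a_h$ on $\mathbf{Z}_h$. Since $\mathbf{w}_h\in\mathbf{Z}_h$, the first discrete equation gives $a_h(\mathbf{u}_h,\mathbf{w}_h)=0$; since $\mathbf{w}_h\in\mathbf{Z}$, the continuous momentum equation gives $a(\mathbf{u},\mathbf{w}_h)=0$. Subtracting, inserting a Brenner--Scott piecewise polynomial $\mathbf{u}_\pi$ from Lemma \ref{lm:scott}, and invoking the $k$-consistency \eqref{eq:consist} to replace $a^K(\mathbf{u}_\pi,\cdot)$ with $a_h^K(\mathbf{u}_\pi,\cdot)$,
\begin{equation*}
\alpha\,\alpha_*\|\mathbf{w}_h\|_0^{2}\le a_h(\mathbf{w}_h,\mathbf{w}_h)=\sum_{K\in\mathcal{T}_h}\Bigl[\,a^K(\mathbf{u}-\mathbf{u}_\pi,\mathbf{w}_h)+a_h^K(\mathbf{u}_\pi-\mathbf{u}_I,\mathbf{w}_h)\,\Bigr].
\end{equation*}
Cauchy--Schwarz, the stability bound \eqref{eq:stabk}, and the approximation estimates from Lemma \ref{lm:scott} and Proposition \ref{thm:interpolation} will give $\|\mathbf{w}_h\|_0\le Ch^{k+1}|\mathbf{u}|_{k+1}$, and the triangle inequality with $\|\mathbf{u}-\mathbf{u}_I\|_0\le Ch^{k+1}|\mathbf{u}|_{k+1}$ produces the $L^2$ velocity bound. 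Crucially, no pressure norm appears on the right-hand side, which is exactly the pressure-robust feature emphasized in the introduction.

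The divergence contribution of $\|\mathbf{u}-\mathbf{u}_h\|_{\mathbf{V}}$ is easier: the second discrete equation combined with ${\rm div}\,\mathbf{V}_h=Q_h$ forces ${\rm div}\,\mathbf{u}_h=\Pi^{0}_{Q_h}f$, so $\|{\rm div}(\mathbf{u}-\mathbf{u}_h)\|_0=\|f-\Pi^{0}_{Q_h}f\|_0\le Ch^k|\mathbf{u}|_{k+1}$ by standard polynomial approximation, which together with the $L^2$ estimate yields the $\mathbf{V}$-norm bound. For the pressure, I would use the discrete inf-sup \eqref{eq:inf-sup discreta}: choosing a piecewise polynomial $p_\pi\in Q_h$ approximating $p$ and subtracting the two momentum equations gives, for every $\mathbf{v}_h\in\mathbf{V}_h$,
\begin{equation*}
b(\mathbf{v}_h,p_h-p_\pi)= \bigl[a(\mathbf{u},\mathbf{v}_h)-a_h(\mathbf{u}_h,\mathbf{v}_h)\bigr] + b(\mathbf{v}_h,p-p_\pi).
\end{equation*}
Inserting $\mathbf{u}_\pi$ once more, using the $k$-consistency of $a_h$ and the $L^2$ velocity estimate just proved, together with \eqref{eq: stime q_I}, yields $\|p_h-p_\pi\|_Q\le Ch^k(|\mathbf{u}|_{k+1}+|p|_k)$; the triangle inequality then closes the argument. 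The only delicate point is verifying the commuting divergence identity for $\mathbf{u}_I$, since the whole pressure-robust mechanism hinges on $\mathbf{w}_h\in\mathbf{Z}_h$; once this is secured, the remainder is routine VEM bookkeeping.
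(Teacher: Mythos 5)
Your proposal is correct and follows essentially the same route as the paper: a divergence-commuting interpolant $\mathbf{u}_I$ so that the error $\mathbf{u}_h-\mathbf{u}_I$ lies in $\mathbf{Z}_h\subseteq\mathbf{Z}$, coercivity of $a_h$ on the kernel plus $k$-consistency with a Brenner--Scott polynomial for the pressure-independent $L^2$ velocity bound, the exact divergence identity for the $\mathbf{V}$-norm, and the discrete inf-sup for the pressure. The only cosmetic difference is that the paper obtains $\mathbf{u}_I$ via the inf-sup/Fortin argument (Proposition 2.5 of the Brezzi--Fortin book), whereas you argue directly from the degrees of freedom $\mathbf{D_V}$; both yield the same commuting-diagram property and stability, so this is a matter of presentation rather than substance.
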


\begin{proof}
We begin by remarking that as a consequence of the inf-sup condition  with classical arguments (see for
instance Proposition 2.5 in \cite{fortin1991mixed}), there exists $\mathbf{u}_I \in  \mathbf{V}_h$ such that 
\begin{gather}
\label{eq:divinterpolant}
\Pi_{k-1}^{0,K} ({\rm div} \, \mathbf{u}_I) = {\rm div} \, \mathbf{u}_I = \Pi_{k-1}^{0,K}({\rm div} \, \mathbf{u}) \qquad \text{for all $K \in \mathcal{T}_h$,} \\
\label{eq:stability}
\|\mathbf{u} - \mathbf{u}_I\|_{0} \leq C \, \inf_{\mathbf{v}_h \in \mathbf{V}_h}\|\mathbf{u} - \mathbf{v}\|_{0} \qquad \text{and} \qquad
\qquad
\|\mathbf{u} - \mathbf{u}_I\|_{\mathbf{V}} \leq C \, \inf_{\mathbf{v}_h \in \mathbf{V}_h}\|\mathbf{u} - \mathbf{v}\|_{\mathbf{V}}.
\end{gather}
Let us set $\boldsymbol{\delta}_h = \mathbf{u}_I - \mathbf{u}_h$. From \eqref{eq:divinterpolant} and   \eqref{eq:darcy virtual}, we have that ${\rm div}\, \boldsymbol{\delta}_h = 0$ and thus  $\boldsymbol{\delta}_h \in \mathbf{Z}_h$. Now, using \eqref{eq:coercive}, \eqref{eq:stabk}, \eqref{eq:darcy virtual} and introducing the piecewise polynomial approximation \eqref{eq:scott} together with \eqref{eq:consist}, we have
\[
\begin{split}
\alpha_* \, \alpha \, \|\boldsymbol{\delta}_h\|^2_0 & \leq \alpha_* \, a(\boldsymbol{\delta}_h, \, \boldsymbol{\delta}_h) \leq a_h(\boldsymbol{\delta}_h, \, \boldsymbol{\delta}_h)  = a_h(\mathbf{u}_I, \, \boldsymbol{\delta}_h) - a_h(\mathbf{u}_h, \, \boldsymbol{\delta}_h) \\
& = a_h(\mathbf{u}_I, \, \boldsymbol{\delta}_h) + b(\boldsymbol{\delta}_h, p_h) =  a_h(\mathbf{u}_I, \, \boldsymbol{\delta}_h) \\
& = \sum_{K \in \mathcal{T}_h} a_h^K(\mathbf{u}_I, \, \boldsymbol{\delta}_h) =
   \sum_{K \in \mathcal{T}_h} \left ( a_h^K(\mathbf{u}_I - \mathbf{u}_{\pi}, \, \boldsymbol{\delta}_h) + a^K(\mathbf{u}_{\pi}\,, \boldsymbol{\delta}_h) \right) \\
& =  \sum_{K \in \mathcal{T}_h} \left ( a_h^K(\mathbf{u}_I - \mathbf{u}_{\pi}, \,  \boldsymbol{\delta}_h) + a^K(\mathbf{u}_{\pi} - \mathbf{u},  \, \boldsymbol{\delta}_h) \right) - a( \mathbf{u}, \, \boldsymbol{\delta}_h)  \\
& = \sum_{K \in \mathcal{T}_h} \left ( a_h^K(\mathbf{u}_I - \mathbf{u}_{\pi}, \,  \boldsymbol{\delta}_h) + a^K(\mathbf{u}_{\pi} - \mathbf{u},  \, \boldsymbol{\delta}_h) \right) + b( \boldsymbol{\delta}_h \, , p)  \\
& = \sum_{K \in \mathcal{T}_h} \left ( a_h^K(\mathbf{u}_I - \mathbf{u}_{\pi}, \,  \boldsymbol{\delta}_h) + a^K(\mathbf{u}_{\pi} - \mathbf{u},  \, \boldsymbol{\delta}_h) \right) \\
& \leq C \sum_{K \in \mathcal{T}_h} \left ( \|\mathbf{u}_I - \mathbf{u}_{\pi}\|_{0,K} + \|\mathbf{u} - \mathbf{u}_{\pi}\|_{0,K} \right)  \|\mathbf{\delta}_h\|_{0, K} \\
& \leq C \, \left ( \|\mathbf{u}_I - \mathbf{u}_{\pi}\|_{0} + \|\mathbf{u} - \mathbf{u}_{\pi}\|_{0} \right)  \|\mathbf{\delta}_h\|_{0}
\end{split}
\]
then
\[
\|\mathbf{\delta}_h\|_{0} \leq C \, \|\mathbf{u}_I - \mathbf{u}_{\pi}\|_{0} + \|\mathbf{u} - \mathbf{u}_{\pi}\|_{0} .
\] 
The $L^2$-estimate follows easily by the triangle inequality.
It is also straightforward to see from \eqref{eq:darcy variazionale} and \eqref{eq:darcy virtual} that
\[
b(\mathbf{u} - \mathbf{u}_h, q_h) = 0 \qquad \text{for all $q_h \in Q_h$},
\]
than we get ${\rm div}\, \mathbf{u}_h = \Pi_{k-1}^{0,K} ({\rm div}\, \mathbf{u})$ for all $K \in \mathcal{T}_h$ and therefore
\[
\|{\rm div} (\mathbf{u} - \mathbf{u}_h)\|_{0} = \sum_{K \in \mathcal{T}_h} \|{\rm div} \, \mathbf{u} - \Pi_{k-1}^{0,K} ({\rm div}\, \mathbf{u})\|_{0,K} \leq C \, h^k \, |{\rm div}\, \mathbf{u}|_{k} \leq C\, h^k |\mathbf{u}|_{k+1},
\]
from which the estimate in the ${\mathbf{V}}$ norm.
We proceed by analysing the error on the pressure field. 
Let $q_h \in Q_h$, then from the discrete inf-sup condition \eqref{eq:inf-sup discreta}, we infer:
\begin{equation}
\label{eq:thm4.1}
\tilde{\beta} \|p_h - q_h\|_Q \leq \sup_{\mathbf{v}_h \in \mathbf{V}_h \, \mathbf{v}_h \neq \mathbf{0}} \frac{b(\mathbf{v}_h, p_h - q_h)}{\|\mathbf{v}_h\|_{V}} = \sup_{\mathbf{v}_h \in \mathbf{V}_h \, \mathbf{v}_h \neq \mathbf{0}} \frac{b(\mathbf{v}_h, p_h - p) + b(\mathbf{v}_h,  p - q_h)}{\|\mathbf{v}_h\|_{V}}. 
\end{equation}
Since $(\mathbf{u},p)$ and $(\mathbf{u}_h,p_h)$ are respectively the solution of \eqref{eq:darcy variazionale} and \eqref{eq:darcy virtual}, it follows that
\begin{gather*}
a(\mathbf{u}, \mathbf{v}_h) + b(\mathbf{v}_h, p) = 0 \qquad  \text{for all $\mathbf{v}_h \in \mathbf{V}_h$,} \\
a_h(\mathbf{u}_h, \mathbf{v}_h) + b(\mathbf{v}_h, p_h) = 0 \qquad  \text{for all $\mathbf{v}_h \in \mathbf{V}_h$.} 
\end{gather*}
Therefore, we get
\begin{equation*}
b(\mathbf{v}_h, p_h - p)  = a(\mathbf{u}, \mathbf{v}_h) - a_h(\mathbf{u}_h, \mathbf{v}_h)  \qquad  \text{for all $\mathbf{v}_h \in \mathbf{V}_h$.}
\end{equation*}
Using \eqref{eq:consist}, the continuity of $a_h(\cdot, \cdot)$ and the triangle inequality, we get:
\[
\begin{split}
b(\mathbf{v}_h, p_h - p) &= a(\mathbf{u}, \mathbf{v}_h)  - a_h(\mathbf{u}_h, \mathbf{v}_h)  
= \sum_{K \in \mathcal{T}_h} \biggl( a^K(\mathbf{u}, \mathbf{v}_h)  - a_h^K(\mathbf{u}_h, \mathbf{v}_h)\biggr) \\
&= \sum_{K \in \mathcal{T}_h} \biggl( a^K(\mathbf{u} - \mathbf{u}_{\pi}, \mathbf{v}_h)  + 
a_h^K(\mathbf{u}_{\pi}  -\mathbf{u}_h, \mathbf{v}_h) \biggr) \\
& \leq \sum_{K \in \mathcal{T}_h} C \bigl( \|\mathbf{u} - \mathbf{u}_{\pi}\|_{\mathbf{V},K} + 
\|(\mathbf{u}_{\pi} - \mathbf{u}_h)\|_{\mathbf{V},K}    \bigr) \|\mathbf{v}_h\|_{\mathbf{V},K} \\
&  \leq  \sum_{K \in \mathcal{T}_h} C  \bigl( \|\mathbf{u} - \mathbf{u}_{\pi}\|_{\mathbf{V},K} + \|\mathbf{u} -\mathbf{u}_h\|_{\mathbf{V},K} \bigr) 
\|\mathbf{v}_h\|_{\mathbf{V},K} 
\end{split}
\]
where $\mathbf{u}_{\pi}$  is the piecewise polynomial of degree $k$ defined in Lemma \ref{lm:scott}. Then, from estimate~\eqref{eq:scott} and the previous estimate on the velocity error, we obtain
\begin{equation}
\label{eq:mu_2bis}
\begin{split}
|b(\mathbf{v}_h, p_h - p)| &\leq  C h^k \, |\mathbf{u}|_{k+1} \,  \|\mathbf{v}_h\|_{\mathbf{V}}.
\end{split}
\end{equation}
Moreover, we have
\begin{equation}
\label{eq:thm4.4}
|b(\mathbf{v}_h, p - q_h) | \leq C \|p - q_h\|_{Q}\|\mathbf{v}_h\|_{\mathbf{V}}.
\end{equation}
Then, using \eqref{eq:mu_2bis} and \eqref{eq:thm4.4} in \eqref{eq:thm4.1}, we infer

\begin{equation}\label{eq:press-est}
\|p_h - q_h\|_Q \leq C h^k \, |\mathbf{u}|_{k+1} + C  \|p - q_h\|_{Q}.
\end{equation}
Finally, using~\eqref{eq:press-est} and the triangular inequality, we get
\[
\|p -p_h\|_Q \leq \|p -q_h\|_Q + \|p_h - q_h\|_Q \leq C h^k \, |\mathbf{u}|_{k+1}   + C  \|p - q_h\|_{Q} \qquad \text{for all $q_h \in Q_h$.}
\]
Passing to the infimum with respect to $q_h \in Q_h$, and using estimate \eqref{eq: stime q_I}, we get the thesis.
\end{proof}

\begin{remark}
We observe that the estimates on the velocity errors in Theorem \ref{thm5} do not depend on the continuous pressure, whereas the velocity errors of the classical methods have a pressure contribution. Therefore the proposed scheme belongs to the class of the \textbf{pressure-robust methods}.
\end{remark}



\section{A Stable VEM for Brinkman Equations}
\label{sec:6}

\subsection{The continuous problem}
The Brinkman equation describes fluid flow in complex porous media with a viscosity coefficient highly varying so that the flow is dominated by the Darcy equations in some regions of the domain  and by the Stokes equation in others. We consider the Brinkman equation on a polygon $\Omega \subseteq \R^2$ with homogeneous Dirichlet boundary
conditions:
\begin{equation}
\label{eq:brinkman primale}
\left\{
\begin{aligned}
&  \mu \, \boldsymbol{\Delta}  \mathbf{u}  +  \nabla p  +  \K^{-1} \mathbf{u} = \mathbf{f} \qquad  & &\text{in $\Omega$,} \\
& {\rm div} \, \mathbf{u} = 0 \qquad & &\text{in $\Omega$,} \\
& \mathbf{u}  = \mathbf{0}  \qquad & &\text{on $\partial \Omega$,}
\end{aligned}
\right.
\end{equation}
where $\mathbf{u}$ and $p$ are the unknown velocity and  pressure fields, $\mu$ is the fluid viscosity, $\K$ denotes the permeability tensor of the porous media and $\mathbf{f} \in [L^2(\Omega)]^2$ is the external source term. 
We assume  that  $\K$ is a symmetric positive definite tensor and that there exist two positive (uniform) constants  $\lambda_1$, $\lambda_2 > 0$ such that
\begin{equation*}
\lambda_1 \, \eta^T \eta \leq \eta^T \K^{-1} \eta \leq  \lambda_2 \, \eta^T  \eta \qquad \text{for all $\eta \in \R^2$.}
\end{equation*}
For what concerns the fluid viscosity we consider $0 < \mu \leq C$, this include  the case where $\mu$ approaches zero and  equation \eqref{eq:brinkman primale} becomes a singular perturbation of the classic Darcy equations.
Let us consider the spaces
\begin{equation*}
\mathbf{V}:= [H_0^1(\Omega)]^2, \qquad Q:= L^2_0(\Omega) 
\end{equation*}
with the usual norms, and let  $A(\cdot, \cdot) \colon \mathbf{V} \times \mathbf{V} \to \R$ be the bilinear form defined by:
\begin{equation*}
A (\mathbf{u},  \mathbf{v}) :=  a^{\nabla} (\mathbf{u},   \mathbf{v}) + a(\mathbf{u},   \mathbf{v}) , \qquad \text{for all $\mathbf{u},  \mathbf{v} \in \mathbf{V}$}
\end{equation*}
where
\[
a^{\nabla} (\mathbf{u},   \mathbf{v}) := \int_{\Omega} \mu \, \boldsymbol{\nabla} \mathbf{u} : \boldsymbol{\nabla} \mathbf{v} \,{\rm d}x \qquad \text{for all $\mathbf{u},  \mathbf{v} \in \mathbf{V}$}
\]
and $a(\cdot, \cdot)$ is the bilinear form defined in \eqref{eq:forma a}.
Then the  variational formulation of Problem \eqref{eq:brinkman primale} is:
\begin{equation}
\label{eq:brinkman variazionale}
\left\{
\begin{aligned}
& \text{find $(\mathbf{u}, p) \in \mathbf{V} \times Q$, such that} \\
& A(\mathbf{u}, \mathbf{v}) + b(\mathbf{v}, p) = (\mathbf{f}, \mathbf{v})  \qquad & \text{for all $\mathbf{v} \in \mathbf{V}$,} \\
&  b(\mathbf{u}, q) = 0 \qquad & \text{for all $q \in Q$,}
\end{aligned}
\right.
\end{equation}
where and $b(\cdot, \cdot) \colon \mathbf{V} \times Q \to \R$  is the bilinear form in \eqref{eq:forma b} and using standard notation
\[
(\mathbf{f}, \mathbf{v})  = \int_{\Omega} \mathbf{f} \cdot \mathbf{v}\, {\rm d}x.
\] 
The natural energy norm for the velocities is induced by the symmetric an positive bilinear form $A(\cdot, \cdot)$ and is defined by (e.g. \cite{stenberg})
\begin{equation*}
\|\mathbf{v}\|^2_{\mathbf{V}, \mu} := A(\mathbf{v}, \mathbf{v}) = \mu \,\|\nabla \mathbf{v}\|_0^2 + \|\K^{-1/2} \mathbf{v}\|_0^2.
\end{equation*}
We can observe that the equivalence with the $\mathbf{V}$ norm  is not uniform, i.e.
\[
c_1  \sqrt{\mu}\, \|\mathbf{v}\|_{\mathbf{V}} \leq  \|\mathbf{v}\|_{\mathbf{V}, \mu} \leq c_2 \, \|\mathbf{v}\|_{\mathbf{V}}
\]
where $c_1$, $c_2$ here and in the follows denote two positive constant independent of $h$ and $\mu$. 
For what concerns the pressures,  we consider the  norm  (see for instance \cite{stenberg})
\begin{equation}
\label{eq:norma pressioni brinkman}
\|p\|_{Q, \mu} := \sup_{\mathbf{v} \in \mathbf{V}} \frac{b(\mathbf{v}, p)}{\|\mathbf{v}\|_{\mathbf{V}, \mu}}
\end{equation}
Using the inf-sup condition in the usual norm it is possible to check the equivalence between the norms for the pressure but again the equivalence is not uniform, i.e.
\[
c_1 \, \|p\|_Q \leq \|p\|_{Q, \mu} \leq \frac{c_2}{\sqrt{\mu}} \, \|p\|_{Q}.
\]
Since, considering the modified norm, the bilinear form $A(\cdot, \cdot)$ is  uniformly continuous and coercive, and inf-sup condition is clearly fulfilled, Problem \eqref{eq:brinkman variazionale} has a unique solution $(\mathbf{u}, p) \in \mathbf{V} \times Q$ such that
\[
\|\mathbf{u} \|_{\mathbf{V}, \mu} + \|p\|_{Q, \mu} \leq C \, \|\mathbf{f}\|_{\mathbf{V}'}
\]
where the constant $C$ depends only on $\Omega$.


\subsection{Virtual formulation for Brinkman equations}

Mathematically,  Brinkman equations can be viewed as a combination of  the Stokes and the
Darcy equation, that can change from place to place in the computational domain.
Therefore, numerical schemes for  Brinkman equations have to be
carefully designed to accommodate both  Stokes and Darcy simultaneously.
In this section we propose a Virtual Element scheme  that is 
accurate for both Darcy and Stokes flows.
For this goal we combine the ideas  developed in the previous sections with the argument in \cite{stokes}.

Let us consider the virtual spaces $\mathbf{V}_h$ and $Q_h$ (cfr. \eqref{eq:V_h} and \eqref{eq:Q_h}). As usual in the VEM framework we need to define a computable approximation of the continuous bilinear forms.  Using obvious notations we split the bilinear form  $A(\cdot, \cdot)$ as
\begin{equation*}
A(\mathbf{u},  \mathbf{v}) =: \sum_{K \in \mathcal{T}_h} A^K (\mathbf{u},  \mathbf{v})  = \sum_{K \in \mathcal{T}_h} \left( a^{\nabla, K} (\mathbf{u},  \mathbf{v}) + a^K (\mathbf{u},  \mathbf{v})\right) \qquad \text{for all $\mathbf{u},  \mathbf{v} \in \mathbf{V}$.}
\end{equation*}
We begin by observing that, from \cite{stokes} (in particular c.f. $(27)-(29)$) and from Section \ref{sub:3.2}, $A^K(\mathbf{q}_k, \mathbf{v})$ is computable on the basis of the DoFs $\mathbf{D_V}$ for all $\mathbf{q}_k \in [\Pk_k(K)]^2$ and for all $\mathbf{v} \in \mathbf{V}_h$. 
Starting from this observation we can approximate the continuous form $A^K(\cdot, \cdot)$ with the bilinear form 
$A_h^K(\cdot, \cdot) \colon \mathbf{V}_h^K \times \mathbf{V}_h^K \to \R$,
given by
\begin{equation*}
A_h^K(\mathbf{u}, \mathbf{v}) = a_h^{\nabla, K}(\mathbf{u}, \mathbf{v}) + a_h^K(\mathbf{u}, \mathbf{v}) \qquad \text{for all $\mathbf{u}$, $\mathbf{v} \in \mathbf{V}_h$}
\end{equation*}
where $a_h^{\nabla, K}(\cdot, \cdot)$ is the bilinear form defined in equation $(35)$ in \cite{stokes} and $a_h^K(\cdot, \cdot)$ is defined in \eqref{eq:a_h^K}.
It is clear that the bilinear form $A_h^K(\cdot, \cdot)$ satisfies the $k$-consistency and the stability properties.
%
%
%
%
%
%
%
As usual we build the global approximated bilinear form $A_h(\cdot, \cdot) \colon \mathbf{V}_h \times \mathbf{V}_h \to \R$ by simply summing the local contributions.
For what concerns the bilinear form $b(\cdot, \cdot)$, as observed in Section \ref{sub:3.3}, it can be computed exactly.
The last step consists in constructing a computable approximation of the right-hand side $(\mathbf{f}, \, \mathbf{v})$ in \eqref{eq:brinkman variazionale}.  We define the approximated load term $\mathbf{f}_h$ as 
\begin{equation}
\label{eq:f_h}
\mathbf{f}_h := \Pi_{k}^{0,K} \mathbf{f} \qquad \text{for all $K \in \mathcal{T}_h$,}
\end{equation}
and consider:
\begin{equation}
\label{eq:right}
(\mathbf{f}_h, \mathbf{v}_h)  = \sum_{K \in \mathcal{T}_h} \int_K \mathbf{f}_h \cdot \mathbf{v}_h \, {\rm d}K = \sum_{K \in \mathcal{T}_h} \int_K \Pi_{k}^{0,K} \mathbf{f} \cdot \mathbf{v}_h \, {\rm d}K = \sum_{K \in \mathcal{T}_h} \int_K \mathbf{f} \cdot \Pi_{k}^{0,K}  \mathbf{v}_h \, {\rm d}K.
\end{equation}
We observe that \eqref{eq:right} can be exactly computed from $\mathbf{D_V}$ for all $\mathbf{v}_h \in \mathbf{V}_h$ (see Proposition \ref{prp:pkprojection}). 
Furthermore, the following result concerning a $L^2$ and  $H^1$-type norm, can be proved using standard arguments \cite{VEM-volley}.
\begin{lemma}
\label{lemma2}
Let $\mathbf{f}_h$ be defined as in \eqref{eq:f_h},  and let us assume $\mathbf{f} \in H^{k+1}(\Omega)$. Then, for all $\mathbf{v}_h \in \mathbf{V}_h$, it holds
\begin{gather*}
\left|( \mathbf{f}_h - \mathbf{f}, \mathbf{v}_h ) \right| \leq C h^{k+1} |\mathbf{f}|_{k+1} \|\mathbf{v}_h\|_{0} \qquad \text{and} \qquad
\left|( \mathbf{f}_h - \mathbf{f}, \mathbf{v}_h ) \right| \leq C h^{k+2} |\mathbf{f}|_{k+1} |\mathbf{v}_h|_{\mathbf{V}}.
\end{gather*}
\end{lemma}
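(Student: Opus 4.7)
The plan is to work element by element and exploit the $L^2$-orthogonality built into the definition $\mathbf{f}_h|_K = \Pi_{k}^{0,K}\mathbf{f}$. Specifically, since $\mathbf{f}_h - \mathbf{f}$ is $L^2$-orthogonal to $[\Pk_k(K)]^2$, for any polynomial $\mathbf{q}_k \in [\Pk_k(K)]^2$ we have the identity
\[
(\mathbf{f}_h - \mathbf{f}, \mathbf{v}_h)_K = (\mathbf{f}_h - \mathbf{f}, \mathbf{v}_h - \mathbf{q}_k)_K,
\]
so that each factor on the right can be separately controlled by a standard polynomial approximation estimate. The two ingredients I would invoke are: $(i)$ optimal $L^2$-projection error on star-shaped elements under assumption $\mathbf{(A1)}$,
\[
\|\mathbf{f} - \Pi_{k}^{0,K}\mathbf{f}\|_{0,K} \leq C\,h_K^{k+1}|\mathbf{f}|_{k+1,K};
\]
and $(ii)$ the Poincar\'e-type bound $\|\mathbf{v}_h - \Pi_{0}^{0,K}\mathbf{v}_h\|_{0,K} \leq C\,h_K\,|\mathbf{v}_h|_{1,K}$, again valid on star-shaped $K$.

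For the first estimate I would take $\mathbf{q}_k = \mathbf{0}$ and apply Cauchy--Schwarz locally:
\[
|(\mathbf{f}_h - \mathbf{f}, \mathbf{v}_h)_K| \leq \|\mathbf{f}_h - \mathbf{f}\|_{0,K}\,\|\mathbf{v}_h\|_{0,K} \leq C\,h_K^{k+1}\,|\mathbf{f}|_{k+1,K}\,\|\mathbf{v}_h\|_{0,K}.
\]
Summing over $K \in \mathcal{T}_h$, bounding $h_K \leq h$, and applying discrete Cauchy--Schwarz yields the first inequality $|( \mathbf{f}_h - \mathbf{f}, \mathbf{v}_h )| \leq C\,h^{k+1}|\mathbf{f}|_{k+1}\|\mathbf{v}_h\|_0$.

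For the second, sharper estimate I would instead pick $\mathbf{q}_k = \Pi_{0}^{0,K}\mathbf{v}_h \in [\Pk_k(K)]^2$. Then Cauchy--Schwarz combined with ingredients $(i)$ and $(ii)$ gives
\[
|(\mathbf{f}_h - \mathbf{f}, \mathbf{v}_h)_K| \leq \|\mathbf{f}_h - \mathbf{f}\|_{0,K}\,\|\mathbf{v}_h - \Pi_{0}^{0,K}\mathbf{v}_h\|_{0,K} \leq C\,h_K^{k+2}\,|\mathbf{f}|_{k+1,K}\,|\mathbf{v}_h|_{1,K},
\]
and summing over $K$ together with $|\mathbf{v}_h|_{1} \leq |\mathbf{v}_h|_{\mathbf{V}}$ produces the global bound $C\,h^{k+2}|\mathbf{f}|_{k+1}|\mathbf{v}_h|_{\mathbf{V}}$.

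The argument is essentially routine once the two approximation ingredients are available; nothing in it uses virtual-element structure beyond the fact that $\mathbf{v}_h|_K \in [H^1(K)]^2$, so the estimate would work for any shape-regular polygonal space. The only real point requiring care is that both the $L^2$-projection error estimate and the Poincar\'e constant depend on the shape of $K$, so one must invoke the star-shapedness hypothesis $\mathbf{(A1)}$ (and implicitly $\mathbf{(A2)}$) to guarantee that the constants are uniform in $h$; this is standard in VEM analysis, see \cite{VEM-volley}.
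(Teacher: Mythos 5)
Your argument is correct and is precisely the standard orthogonality-plus-approximation reasoning the paper has in mind: the paper gives no proof of Lemma \ref{lemma2}, merely deferring to ``standard arguments'' in \cite{VEM-volley}, and your two choices of $\mathbf{q}_k$ (zero for the $L^2$ bound, the cellwise mean $\Pi_0^{0,K}\mathbf{v}_h$ combined with a Poincar\'e inequality for the extra power of $h$) are exactly those standard arguments. No gaps; the only point worth keeping explicit, which you already flag, is that the uniformity of the Bramble--Hilbert and Poincar\'e constants rests on $\mathbf{(A1)}$--$\mathbf{(A2)}$.
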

In the light of the previous definitions, we consider the virtual element approximation of the Brinkman problem:
\begin{equation}
\label{eq:brinkman virtual}
\left \{
\begin{aligned}
& \text{find $(\mathbf{u}_h, p_h) \in \mathbf{V}_h \times Q_h$, such that} \\
& A_h(\mathbf{u}_h, \mathbf{v}_h) + b(\mathbf{v}_h, p_h) = (\mathbf{f}_h, \, \mathbf{v}_h)  \qquad  \text{for all $\mathbf{v}_h \in \mathbf{V}_h$,}\\
&  b(\mathbf{u}_h, q_h) = 0 \qquad  \text{for all $q_h \in Q_h$.}
\end{aligned}
\right.
\end{equation}
Equation \eqref{eq:brinkman virtual} is well posed since the discrete bilinear form $A_h(\cdot,\cdot)$ 
is (uniformly) stable with respect to the norm $\|\cdot\|_{\mathbf{V}, \mu}$ by construction and the inf-sup condition is fulfilled (the proof follows the guidelines of Proposition 4.2 in \cite{stokes} and the linearity of the Fortin operator). Then we have the following result.

\begin{theorem}
Problem \eqref{eq:brinkman virtual} has a unique solution $(\mathbf{u}_h, p_h) \in \mathbf{V}_h \times Q_h$, verifying the estimate
\[
\|\mathbf{u}_h\|_{\mathbf{V}, \mu} + \|p_h\|_{Q, \mu} \leq C \|\mathbf{f}\|_{\mathbf{V}'}.
\]
\end{theorem}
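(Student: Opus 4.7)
The plan is to verify the Brezzi conditions for the saddle-point pair $(A_h, b)$ on $\mathbf{V}_h \times Q_h$ endowed with the norms $\|\cdot\|_{\mathbf{V},\mu}$ and $\|\cdot\|_{Q,\mu}$, and then conclude with the standard abstract theorem for mixed problems. The delicate aspects are that every bound must be uniform in the viscosity $\mu$ (so that both the Stokes and the Darcy limits are captured) and that the inf-sup has to be established in the $\mu$-dependent pressure norm defined in \eqref{eq:norma pressioni brinkman}.

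First I would verify continuity and coercivity of $A_h$. By construction $A_h^K$ is the sum of $a_h^{\nabla,K}$ (built in \cite{stokes}) and $a_h^K$ (built in Section \ref{sub:3.3}); both satisfy $k$-consistency and stability with constants independent of $h$ and $K$. Writing, for $\mathbf{v}_h \in \mathbf{V}_h^K$,
\[
A_h^K(\mathbf{v}_h,\mathbf{v}_h) = a_h^{\nabla,K}(\mathbf{v}_h,\mathbf{v}_h) + a_h^K(\mathbf{v}_h,\mathbf{v}_h),
\]
and using the two stability estimates separately (the first applied to $\mu\,\|\nabla\mathbf{v}_h\|_{0,K}^2$, the second to $\|\K^{-1/2}\mathbf{v}_h\|_{0,K}^2$), one obtains the $\mu$-uniform bounds
\[
\alpha_*\,\|\mathbf{v}_h\|_{\mathbf{V},\mu}^2 \leq A_h(\mathbf{v}_h,\mathbf{v}_h) \leq \alpha^*\,\|\mathbf{v}_h\|_{\mathbf{V},\mu}^2.
\]
Continuity of $A_h$ in $\|\cdot\|_{\mathbf{V},\mu}$ follows from Cauchy--Schwarz on the symmetric positive form $A_h$, while continuity of $b$ is tautological from the very definition of $\|\cdot\|_{Q,\mu}$. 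Note that uniform coercivity over the whole discrete space makes the discussion on the kernel $\mathbf{Z}_h$ unnecessary.

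The main obstacle is the $\mu$-uniform discrete inf-sup condition in the modified norm, namely
\[
\sup_{\mathbf{v}_h \in \mathbf{V}_h,\,\mathbf{v}_h \neq \mathbf{0}} \frac{b(\mathbf{v}_h, q_h)}{\|\mathbf{v}_h\|_{\mathbf{V},\mu}} \geq \tilde\beta\,\|q_h\|_{Q,\mu} \qquad \text{for all $q_h \in Q_h$}.
\]
Following the guideline of Proposition 4.2 in \cite{stokes}, I would construct a Fortin operator $\Pi_F \colon \mathbf{V} \to \mathbf{V}_h$ that is linear, satisfies
\[
b(\mathbf{v} - \Pi_F \mathbf{v}, q_h) = 0 \qquad \text{for all $q_h \in Q_h$},
\]
and is continuous uniformly in $\mu$, i.e. $\|\Pi_F \mathbf{v}\|_{\mathbf{V},\mu} \leq C\,\|\mathbf{v}\|_{\mathbf{V},\mu}$. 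The key point is that $\Pi_F$ can be built from the usual VEM interpolant (which is continuous in the $H^1$ seminorm, hence controls the Stokes part $\mu\,|\cdot|_1^2$) combined with a divergence-preserving correction exploiting $\mathrm{div}\,\mathbf{V}_h = Q_h$ (which controls the Darcy part $\|\K^{-1/2}\cdot\|_0^2$ via a standard Poincaré-type argument). By the linearity of $\Pi_F$, the two bounds combine additively, yielding a bound in the full norm $\|\cdot\|_{\mathbf{V},\mu}$ uniformly in $\mu$. Once $\Pi_F$ is available, the inf-sup follows at once: for any $q_h \in Q_h$,
\[
\|q_h\|_{Q,\mu} = \sup_{\mathbf{v}\in\mathbf{V}} \frac{b(\mathbf{v},q_h)}{\|\mathbf{v}\|_{\mathbf{V},\mu}} = \sup_{\mathbf{v}\in\mathbf{V}} \frac{b(\Pi_F\mathbf{v},q_h)}{\|\mathbf{v}\|_{\mathbf{V},\mu}} \leq C \sup_{\mathbf{v}_h \in \mathbf{V}_h}\frac{b(\mathbf{v}_h,q_h)}{\|\mathbf{v}_h\|_{\mathbf{V},\mu}}.
\]

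With continuity and coercivity of $A_h$, continuity of $b$, and the inf-sup in hand, the Brezzi theorem provides existence, uniqueness and the a priori bound
\[
\|\mathbf{u}_h\|_{\mathbf{V},\mu} + \|p_h\|_{Q,\mu} \leq C\,\|\mathbf{f}_h\|_{\mathbf{V}'},
\]
with $C$ independent of $h$ and $\mu$. To close, I would observe that the projection property $\Pi_k^{0,K}$ used in \eqref{eq:f_h} together with its $L^2$-stability gives $\|\mathbf{f}_h\|_{\mathbf{V}'} \leq \|\mathbf{f}\|_{\mathbf{V}'}$, which completes the stated estimate.
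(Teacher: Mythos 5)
Your argument follows essentially the same route as the paper, which only sketches this result: uniform stability of $A_h$ in the $\|\cdot\|_{\mathbf{V},\mu}$ norm by construction (via the stability of $a_h^{\nabla,K}$ and $a_h^K$), the discrete inf-sup in the $\mu$-weighted norms via a Fortin operator following Proposition 4.2 of \cite{stokes} and its linearity, and then the Brezzi theory. Your write-up is in fact more detailed than the paper's one-line justification, and the only cosmetic caveat is the final bound $\|\mathbf{f}_h\|_{\mathbf{V}'}\leq\|\mathbf{f}\|_{\mathbf{V}'}$, which needs the self-adjointness and $L^2$-stability of $\Pi^{0,K}_k$ rather than being immediate, since $\Pi^{0,K}_k\mathbf{v}_h$ is not an element of $\mathbf{V}$.
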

We now notice that, if $\mathbf{u} \in \mathbf{V}$ is the velocity solution to Problem \eqref{eq:brinkman variazionale}, then it is the solution
to Problem:
\begin{equation}
\label{eq:ridotto brinkman}
\left \{
\begin{aligned}
& \text{find $\mathbf{u} \in \mathbf{Z}$ such that}\\
& A(\mathbf{u}, \mathbf{v}) = (\mathbf{f}, \mathbf{v}) \qquad \text{for all $\mathbf{v} \in \mathbf{Z}$}
\end{aligned}
\right .
\end{equation}
Analogously, if $\mathbf{u}_h \in \mathbf{V}_h$ is the velocity solution to Problem \eqref{eq:brinkman virtual}, then it is the solution to
Problem:
\begin{equation}
\label{eq:ridotto brinkman virtual}
\left \{
\begin{aligned}
& \text{find $\mathbf{u}_h \in \mathbf{Z}_h$ such that} \\
& A_h(\mathbf{u}_h, \mathbf{v}_h) = (\mathbf{f}_h, \mathbf{v}_h) \qquad \text{for all $\mathbf{v}_h \in \mathbf{Z}_h$}
\end{aligned}
\right .
\end{equation}
For what concerns the convergence results  we state the following theorem. The proof can be derived by extending the techniques of the previous section and is therefore omitted.
\begin{theorem}
Let $\mathbf{u} \in \mathbf{Z}$ be the solution of problem \eqref{eq:ridotto brinkman} and $\mathbf{u}_h \in \mathbf{Z}_h$ be the solution of
problem \eqref{eq:ridotto brinkman virtual}. Then
\[
\|\mathbf{u} - \mathbf{u}_h\|_{\mathbf{V}, \mu} \leq  C (\sqrt{\mu}\, h^k + \|\K^{-1/2}\|_{\infty} h^{k+1}) |u|_{k+1} + C \,h^{k+1}  \, |f|_{k+1} 
\]
Let $(\mathbf{u}, p) \in \mathbf{V} \times Q$ be the solution of Problem \eqref{eq:brinkman variazionale} and $(\mathbf{u}_h, p_h) \in \mathbf{W}_h \in Q_h$ be the solution of Problem \eqref{eq:brinkman virtual}. Then it holds:
\[
\|p - p_h \|_{Q, \mu} \leq C\, \left( h^k \, |u|_{k+1} +  \frac{h^k}{\sqrt{\mu}} |p|_k + h^{k+1} \, |f|_{k+1} \right).
\]
The constants $C$ above are independent of $h$ and $\mu$. 
\end{theorem}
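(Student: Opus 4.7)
The proof parallels the structure of Theorem \ref{thm5}, combining the divergence-free interpolation trick of \cite{stokes} with a careful tracking of the two constituents of the bilinear form $A(\cdot,\cdot)$ needed to obtain the right balance in the viscosity $\mu$. The plan is to first bound the velocity error through a discrete-kernel argument and then recover the pressure error from the inf-sup condition written in the $\|\cdot\|_{\mathbf{V},\mu}$ norm.

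For the velocity estimate, I first invoke the Fortin-like interpolant $\mathbf{u}_I\in\mathbf{V}_h$ provided by the inf-sup property (cf. \eqref{eq:divinterpolant}): since $\mathrm{div}\,\mathbf{u}=0$ and $\mathrm{div}\,\mathbf{V}_h=Q_h$, one has $\mathrm{div}\,\mathbf{u}_I=\Pi^{0,K}_{k-1}(\mathrm{div}\,\mathbf{u})=0$ on every element, so $\mathbf{u}_I\in\mathbf{Z}_h\subseteq\mathbf{Z}$. Setting $\boldsymbol{\delta}_h:=\mathbf{u}_I-\mathbf{u}_h\in\mathbf{Z}_h$ and using the uniform coercivity of $A_h$ in $\|\cdot\|_{\mathbf{V},\mu}$ together with the discrete and continuous momentum equations (both $b$-contributions vanish on $\mathbf{Z}_h\subseteq\mathbf{Z}$), I obtain
\[
\alpha_*\|\boldsymbol{\delta}_h\|_{\mathbf{V},\mu}^2
\leq A_h(\mathbf{u}_I,\boldsymbol{\delta}_h)-(\mathbf{f}_h,\boldsymbol{\delta}_h)
=\bigl(A_h(\mathbf{u}_I,\boldsymbol{\delta}_h)-A(\mathbf{u},\boldsymbol{\delta}_h)\bigr)+(\mathbf{f}-\mathbf{f}_h,\boldsymbol{\delta}_h).
\]
Inserting a piecewise polynomial $\mathbf{u}_\pi$ of degree $k$ and using the $k$-consistency of $A_h^K$, the first bracket splits into $\sum_K\bigl(A_h^K(\mathbf{u}_I-\mathbf{u}_\pi,\boldsymbol{\delta}_h)+A^K(\mathbf{u}_\pi-\mathbf{u},\boldsymbol{\delta}_h)\bigr)$; continuity of $A_h^K,A^K$ in the local $\|\cdot\|_{\mathbf{V},\mu,K}$ norm together with the approximation estimates of Lemma \ref{lm:scott} and Proposition \ref{thm:interpolation} produces the factor $\sqrt{\mu}\,h^k+\|\K^{-1/2}\|_\infty h^{k+1}$ times $|\mathbf{u}|_{k+1}$, while the last term is handled by Lemma \ref{lemma2} and the bound $\|\boldsymbol{\delta}_h\|_0\leq C\|\boldsymbol{\delta}_h\|_{\mathbf{V},\mu}$, producing $Ch^{k+1}|\mathbf{f}|_{k+1}$. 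A triangle inequality with $\mathbf{u}-\mathbf{u}_I$, estimated again in the modified norm, closes the first inequality.

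For the pressure, subtracting the discrete momentum equation from the continuous one gives
\[
b(\mathbf{v}_h,p_h-p)=A(\mathbf{u},\mathbf{v}_h)-A_h(\mathbf{u}_h,\mathbf{v}_h)-(\mathbf{f}-\mathbf{f}_h,\mathbf{v}_h)\qquad\text{for all $\mathbf{v}_h\in\mathbf{V}_h$.}
\]
The first difference is processed exactly as in the velocity step, by inserting $\mathbf{u}_\pi$, applying $k$-consistency and continuity in $\|\cdot\|_{\mathbf{V},\mu}$, and finally invoking the velocity estimate just proved; the second is controlled via Lemma \ref{lemma2}. Dividing by $\|\mathbf{v}_h\|_{\mathbf{V},\mu}$ and passing to the supremum, the inf-sup condition in the norm $\|\cdot\|_{\mathbf{V},\mu}$ (which descends from Proposition \ref{thm2} via the very definition \eqref{eq:norma pressioni brinkman}) bounds $\|p_h-q_h\|_{Q,\mu}$ for any $q_h\in Q_h$. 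A triangle inequality combined with the non-uniform equivalence $\|p-q_h\|_{Q,\mu}\leq c_2\mu^{-1/2}\|p-q_h\|_Q$ and the classical approximation bound \eqref{eq: stime q_I} produces the $h^k|p|_k/\sqrt{\mu}$ term, while the remaining contributions reassemble into the stated estimate.

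The main obstacle will be keeping careful track of the two scales inside $\|\cdot\|_{\mathbf{V},\mu}$ when applying the Bramble--Hilbert-type estimates: the $H^1$-part generates the $\sqrt{\mu}\,h^k$ contribution and the weighted $L^2$-part generates $\|\K^{-1/2}\|_\infty h^{k+1}$. Equally delicate is the treatment of $(\mathbf{f}-\mathbf{f}_h,\boldsymbol{\delta}_h)$, where the $L^2$ bound of Lemma \ref{lemma2} must be converted into the $\|\cdot\|_{\mathbf{V},\mu}$ norm without introducing spurious $\mu^{-1/2}$ dependence; this is possible exactly because $\|\cdot\|_{0}$ is controlled by the Darcy-part $\|\K^{-1/2}\cdot\|_0$ of the modified norm, which is the very feature that makes the scheme $\mu$-robust in the Darcy limit. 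The $\mu^{-1/2}$ factor survives only in the pressure estimate, where it is unavoidable because of the non-uniform equivalence between the $Q$- and the $Q_\mu$-norm.
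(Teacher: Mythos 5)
Your proof is correct and follows exactly the route the paper intends: the author omits this proof, stating only that it ``can be derived by extending the techniques of the previous section'', and your argument is precisely that extension (kernel coercivity of $A_h$ in $\|\cdot\|_{\mathbf{V},\mu}$ with the exactly divergence-free Fortin interpolant so that $\boldsymbol{\delta}_h\in\mathbf{Z}_h\subseteq\mathbf{Z}$, the $k$-consistency splitting through $\mathbf{u}_\pi$, Lemma \ref{lemma2} for the load, and the $\mu$-weighted inf-sup plus the non-uniform norm equivalence for the pressure). The one imprecision is attributing the discrete inf-sup in the $\|\cdot\|_{\mathbf{V},\mu}$ norm directly to Proposition \ref{thm2}; it actually requires the $\mu$-uniformly bounded (simultaneously $H^1$- and $L^2$-stable) Fortin operator that the paper invokes just before stating the theorem, but since the paper asserts that property, this does not affect the validity of your argument.
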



In the last part of this section we present a brief discussion about the construction of a \textbf{reduced} virtual element method for Brinkman equations equivalent to Problem~\eqref{eq:brinkman virtual} but involving significantly fewer degrees of freedom, especially for large $k$.
This construction essentially follows the guidelines of Section 5 in \cite{stokes} (where we refer the reader for a deeper presentation).
Let us define the original reduced local virtual spaces, for $k\geq 2$:
\begin{multline*}
\widehat{\mathbf{W}}_h^K := \left\{  
\mathbf{v} \in [H^1(K)]^2 \quad \text{s.t} \quad \mathbf{v}_{|{\partial K}} \in [\B_k(\partial K)]^2,  \,
\biggl\{
\begin{aligned}
& -\boldsymbol{\Delta}    \mathbf{v}  -  \nabla s \in \mathcal{G}_{k-2}(K)^{\perp},  \\
& {\rm div} \, \mathbf{v} \in \Pk_{0}(K),
\end{aligned}
\biggr. \qquad \text{for some $s \in H^1(K)$}
\quad \right\}
\end{multline*}
As before we enlarge the virtual space $\widehat{\mathbf{W}}_h^K$ and we consider
\begin{multline*}
\widehat{\mathbf{U}}_h^K := \left\{  
\mathbf{v} \in [H^1(K)]^2 \quad \text{s.t} \quad \mathbf{v}_{|{\partial K}} \in [\B_k(\partial K)]^2,  \,
\biggl\{
\begin{aligned}
& - \boldsymbol{\Delta}    \mathbf{v}  -  \nabla s \in \mathcal{G}_{k}(K)^{\perp},  \\
& {\rm div} \, \mathbf{v} \in \Pk_{0}(K),
\end{aligned}
\biggr. \qquad \text{for some $s \in H^1(K)$}
\quad \right\}
\end{multline*}
Finally we define the \textbf{enhanced Virtual Element space}, the restriction $\widehat{\mathbf{V}}_h^K$ of $\widehat{\mathbf{U}}_h^K$ given by
\begin{equation*}
\widehat{\mathbf{V}}_h^K := \left\{ \mathbf{v} \in \widehat{\mathbf{U}}_h^K \quad \text{s.t.} \quad   \left(\mathbf{v} - \Pi^{\nabla,K}_k \mathbf{v}, \, \mathbf{g}_k^{\perp} \right)_{[L^2(K)]^2} = 0 \quad \text{for all $\mathbf{g}_k^{\perp} \in  \mathcal{G}_{k}(K)^{\perp}/\mathcal{G}_{k-2}(K)^{\perp}$} \right\} ,
\end{equation*}
where as before the symbol $\mathcal{G}_{k}(K)^{\perp}/\mathcal{G}_{k-2}(K)^{\perp}$ denotes the polynomials i $\mathcal{G}_{k}(K)^{\perp}$ that are $L^2-$orthogonal to all polynomials of $\mathcal{G}_{k-2}(K)^{\perp}$.
For the pressures we consider the reduced space
\begin{equation*}
\widehat{Q}_h^K :=  \Pk_{0}(K).
\end{equation*}
As sets of degrees of freedom for the reduced spaces, combining the argument in Section \ref{sec:3} and \cite{stokes} we may consider the following. 
For every function $\mathbf{v} \in \widehat{\mathbf{V}}_h^K$ we take the following linear operators $\mathbf{\widehat{D}_V}$, split into three subsets (see Figure \ref{fig:dofslocr}):
\begin{itemize}
\item $\mathbf{\widehat{D}_V1}$: the values of $\mathbf{v}$ at each vertex of the polygon $K$,
\item $\mathbf{\widehat{D}_V2}$: the values of $\mathbf{v}$ at $k-1$ distinct points of every edge $e \in \partial K$,
\item $\mathbf{\widehat{D}_V3}$: the moments  of $\mathbf{v}$
\[
\int_K \mathbf{v} \cdot \mathbf{g}_{k-2}^{\perp} \, {\rm d}K \qquad \text{for all $\mathbf{g}_{k-2}^{\perp} \in \mathcal{G}_{k-2}(K)^{\perp}$.}
\]
\end{itemize} 

\begin{figure}[!h]
\center{
\includegraphics[scale=0.20]{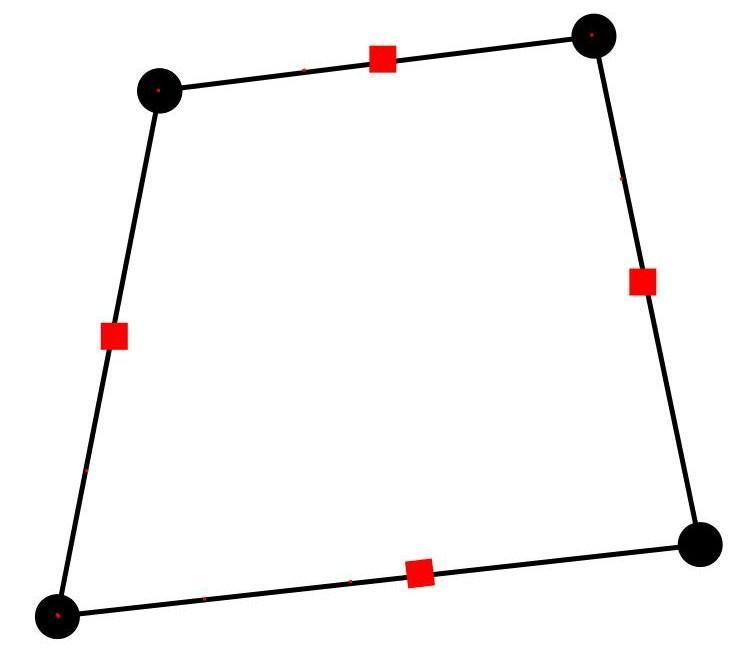} \qquad \qquad
\includegraphics[scale=0.20]{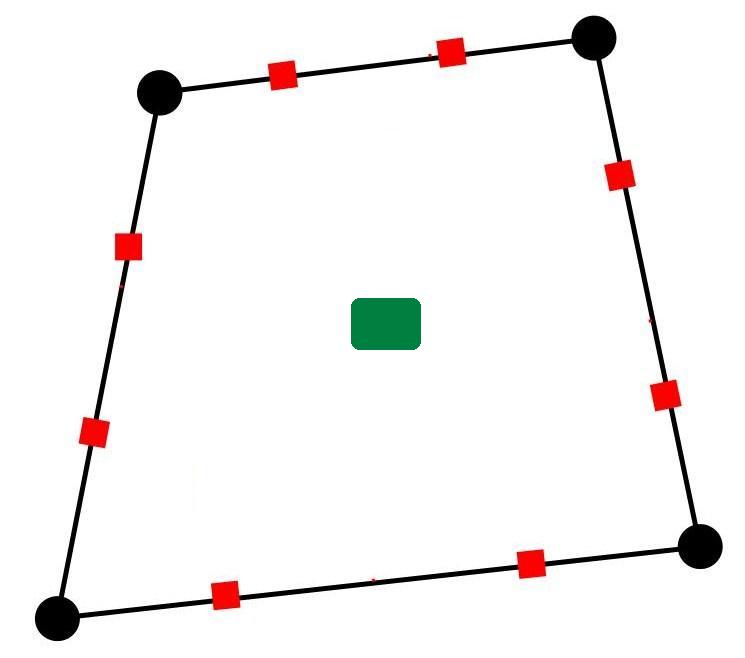}
\caption{Degrees of freedom for $k=2$, $k=3$. We denote $\mathbf{\widehat{D}_V1}$ with the black dots, $\mathbf{\widehat{D}_V2}$ with the red squares, $\mathbf{\widehat{D}_V3}$ with the green rectangles.}
\label{fig:dofslocr}
}
\end{figure}
For every $q \in \widehat{Q}_h$ we consider
\begin{itemize}
\item $\mathbf{\widehat{D}_Q}$: the moment
\[
\int_K q  \, {\rm d}K.
\]
\end{itemize}
Therefore we have that:
\begin{equation*}
\dim\left( \widehat{\mathbf{V}}_h^K \right) = \dim\left([\B_k(\partial K)]^2\right) + \dim\left(\mathcal{G}_{k-2}(K)^{\perp}\right) = 2n_K k + \frac{(k-1)(k-2)}{2},
\end{equation*}
and
\begin{equation*}
\dim(\widehat{Q}_h^K) = \dim(\Pk_{0}(K))  = 1,
\end{equation*}
where $n_K$ is the number of vertexes in $K$.  

We  define the global reduced virtual element spaces in the standard fashion.
The reduced virtual element discretization of the Brinkman problem \eqref{eq:brinkman variazionale} is then:
\begin{equation}
\label{eq:brinkman reduced virtual}
\left\{
\begin{aligned}
& \text{find $\widehat{\mathbf{u}}_h \in \widehat{\mathbf{V}}_h$ and $\widehat{p}_h \in \widehat{Q}_h$, such that} \\
& A_h(\widehat{\mathbf{u}}_h, \widehat{\mathbf{v}}_h) + b(\widehat{\mathbf{v}}_h, \widehat{p}_h) = (\mathbf{f}_h, \widehat{\mathbf{v}}_h) \qquad & \text{for all $\widehat{\mathbf{v}}_h \in \widehat{\mathbf{V}}_h$,} \\
&  b(\widehat{\mathbf{u}}_h, \widehat{q}_h) = 0 \qquad & \text{for all $\widehat{q}_h \in \widehat{Q}_h$.}
\end{aligned}
\right.
\end{equation}
Above, the bilinear forms $A_h(\cdot, \cdot)$  and $b(\cdot, \cdot)$, and the loading term $ \mathbf{f}_h $  are the same as before. 
The following proposition states the relation between Problem~\eqref{eq:brinkman virtual} and the reduced Problem~\eqref{eq:brinkman reduced virtual} (the proof is equivalent to that of Proposition 5.1 in \cite{stokes}).
\begin{proposition}
\label{thm6}
Let $(\mathbf{u}_h, p_h) \in \mathbf{V}_h \times Q_h$ be the solution of problem \eqref{eq:brinkman virtual} and $(\widehat{\mathbf{u}}_h, \widehat{p}_h) \in \widehat{\mathbf{V}}_h \times \widehat{Q}_h$ be the solution of problem \eqref{eq:brinkman reduced virtual}. Then
\begin{equation*}
\widehat{\mathbf{u}}_h = \mathbf{u}_h   \qquad \text{and} \qquad \widehat{p}_{h|K} = \Pi_0^{0, K} p_h \quad \text{for all $K \in \mathcal{T}_h$.}
\end{equation*}
\end{proposition}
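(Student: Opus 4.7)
The plan is to exploit the fact that the reduced spaces $\widehat{\mathbf{V}}_h$ and $\widehat{Q}_h$ sit naturally inside $\mathbf{V}_h$ and $Q_h$, and that the divergence-free constraint collapses the two discrete kernels onto the same space. First I would verify that $\widehat{\mathbf{V}}_h^K \subseteq \mathbf{V}_h^K$ and $\widehat{Q}_h^K \subseteq Q_h^K$ on every element. The inclusions follow immediately from the definitions: the only difference between the two velocity spaces is that $\mathrm{div}\,\mathbf{v}$ is constrained in $\Pk_0(K)$ rather than in $\Pk_{k-1}(K)$, and clearly $\Pk_0(K)\subseteq \Pk_{k-1}(K)$; the boundary trace, the Laplacian-plus-gradient constraint in $\mathcal{G}_k(K)^\perp$, and the enhancement condition involving $\Pi^{\nabla,K}_k$ are identical.

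Next I would identify the two discrete kernels $\mathbf{Z}_h$ and $\widehat{\mathbf{Z}}_h$. For $\mathbf{v}_h\in\mathbf{Z}_h$, testing $b(\mathbf{v}_h,q_h)=0$ against $q_h=\mathrm{div}\,\mathbf{v}_h\in\Pk_{k-1}(K)\subseteq Q_h$ gives $\mathrm{div}\,\mathbf{v}_h\equiv 0$ on every $K$; in particular $\mathrm{div}\,\mathbf{v}_h\in\Pk_0(K)$, so $\mathbf{v}_h\in\widehat{\mathbf{V}}_h$ and hence $\mathbf{v}_h\in\widehat{\mathbf{Z}}_h$. The reverse inclusion is the analogous (even easier) argument. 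Since $\mathbf{u}_h$ satisfies the reduced formulation \eqref{eq:ridotto brinkman virtual} on $\mathbf{Z}_h$ and $\widehat{\mathbf{u}}_h$ satisfies the same identity on $\widehat{\mathbf{Z}}_h$ with the same right-hand side and the same bilinear form $A_h(\cdot,\cdot)$, the coercivity of $A_h$ on this common kernel yields $\mathbf{u}_h=\widehat{\mathbf{u}}_h$.

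For the pressure, I would test the first equation of \eqref{eq:brinkman virtual} against an arbitrary $\widehat{\mathbf{v}}_h\in\widehat{\mathbf{V}}_h\subseteq\mathbf{V}_h$, and subtract the first equation of \eqref{eq:brinkman reduced virtual}. Using $\mathbf{u}_h=\widehat{\mathbf{u}}_h$ the $A_h$ and right-hand-side contributions cancel, leaving
\[
b(\widehat{\mathbf{v}}_h,\,p_h-\widehat{p}_h)=0\qquad\text{for all }\widehat{\mathbf{v}}_h\in\widehat{\mathbf{V}}_h.
\]
Since $\mathrm{div}\,\widehat{\mathbf{v}}_h$ is elementwise constant, this rewrites as $\sum_{K\in\mathcal{T}_h}(\mathrm{div}\,\widehat{\mathbf{v}}_h)_{|K}\int_K(p_h-\widehat{p}_h)\,\mathrm{d}K=0$. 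The inf-sup condition for the reduced pair (established along the lines of Proposition~\ref{thm2}) tells us that $\mathrm{div}\colon\widehat{\mathbf{V}}_h\to\widehat{Q}_h$ is surjective, so $(\mathrm{div}\,\widehat{\mathbf{v}}_h)_{|K}$ ranges over all piecewise constants with zero global mean. An elementary linear algebra argument then forces $\int_K(p_h-\widehat{p}_h)=\lambda|K|$ for a constant $\lambda$ independent of $K$; finally, integrating over $\Omega$ and using $p_h,\widehat{p}_h\in L^2_0(\Omega)$ yields $\lambda=0$, which is exactly $\widehat{p}_{h|K}=\Pi^{0,K}_0 p_h$.

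The main obstacle, already transparent in the above, is the pressure identification: one must carefully exploit the piecewise-constant nature of both $\widehat{p}_h$ and $\mathrm{div}\,\widehat{\mathbf{v}}_h$ and combine the reduced inf-sup with the zero-mean normalisation of the pressures to eliminate the remaining free constant $\lambda$. The velocity part, by contrast, reduces to the simple observation that $\mathbf{Z}_h=\widehat{\mathbf{Z}}_h$, which is itself a direct consequence of the pointwise divergence-free property \eqref{eq:divfree} enjoyed by both spaces.
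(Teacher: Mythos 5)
Your proposal is correct and follows essentially the same route as the proof the paper delegates to Proposition 5.1 of \cite{stokes}: the velocity identity comes from the equality of the discrete kernels $\mathbf{Z}_h=\widehat{\mathbf{Z}}_h$ (a consequence of the pointwise divergence-free property and the inclusions $\widehat{\mathbf{V}}_h\subseteq\mathbf{V}_h$, $\widehat{Q}_h\subseteq Q_h$), and the pressure relation follows by testing the two discrete problems with $\widehat{\mathbf{v}}_h\in\widehat{\mathbf{V}}_h$ and using the surjectivity of ${\rm div}$ onto $\widehat{Q}_h$ together with the zero-mean normalisation. No gaps worth flagging.
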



\section{Numerical tests}
\label{sec:7}

In this section we present two numerical experiments to test the practical performance of the method. The first experiment is focused on the method introduced in Section \ref{sec:3} for the Darcy Problem, whereas in the second experiment we test the method in Section \ref{sec:6} for the  Brinkman equations.

Since the VEM velocity solution $\mathbf{u}_h$ is not explicitly known point-wise inside the elements,  we  compute the method error comparing $\mathbf{u}$ with a suitable polynomial projection of the approximated $\mathbf{u}_h$. 
%
In particular we consider the computable error quantities:
\begin{gather*}
{\rm error}(\mathbf{u}, H^1) := \left( \sum_{K \in \mathcal{T}_h} \left \| \boldsymbol{\nabla} \, u -  \boldsymbol{\Pi}_{k-1}^{0, K} (\boldsymbol{\nabla} \, u_h) \right \|_{0,K}^2 \right)^{1/2} \\
{\rm error}(\mathbf{u}, H({\rm div})) := \left( \sum_{K \in \mathcal{T}_h} \left \| {\rm div} \,u -  {\rm div} \,u_h \right \|_{0,K}^2  + \sum_{K \in \mathcal{T}_h} \left \| u -  \Pi_{k}^{0, K}  \,u_h \right \|_{0,K}^2 \right)^{1/2} \\
{\rm error}(\mathbf{u}, L^2)  := \left(\sum_{K \in \mathcal{T}_h} \left \| u -  \Pi_{k}^{0, K}  \,u_h \right \|_{0,K}^2 \right)^{1/2} \\
{\rm error}(p, L^2)  :=\|p - p_h\|_{0}.
\end{gather*}

Regarding the computational domain, in our tests we always take the square domain $\Omega= [0,1] ^2$, which is partitioned using the following sequences of polygonal meshes:
\begin{itemize}
\item $\{ \mathcal{V}_h\}_h$: sequence of Voronoi meshes with $h=1/4, 1/8, 1/16, 1/32$,
\item $\{ \mathcal{T}_h\}_h$: sequence of triangular meshes with $h=1/2, 1/4, 1/8, 1/16$,
\item $\{ \mathcal{Q}_h\}_h$: sequence of square meshes with $h=1/4, 1/8, 1/16, 1/32$.
\item $\{ \mathcal{W}_b\}_h$: sequence of WEB-like meshes with $h= 4/10, 2/10,  1/10, 1/20$.
\end{itemize}
An example of the adopted meshes is shown in Figure \ref{Figure1}.  
\begin{figure}[!h]
\centering
\includegraphics[scale=0.20]{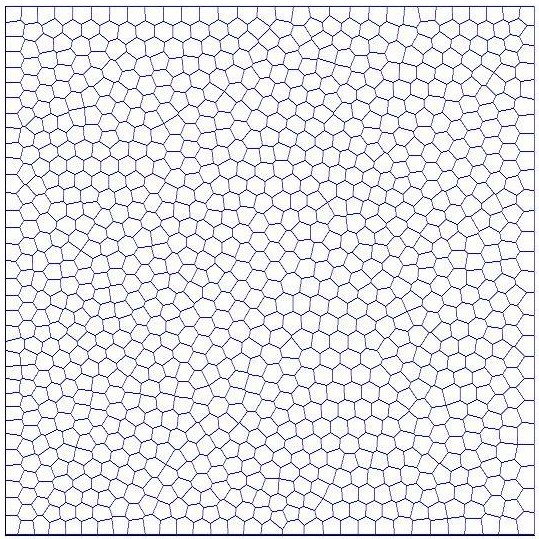} \quad
\includegraphics[scale=0.20]{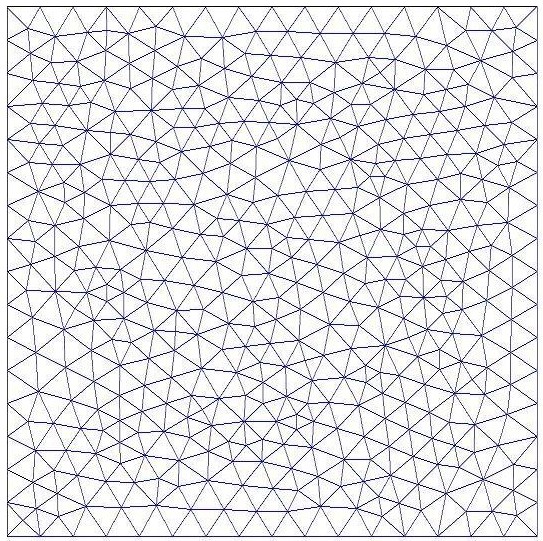} \quad
\includegraphics[scale=0.20]{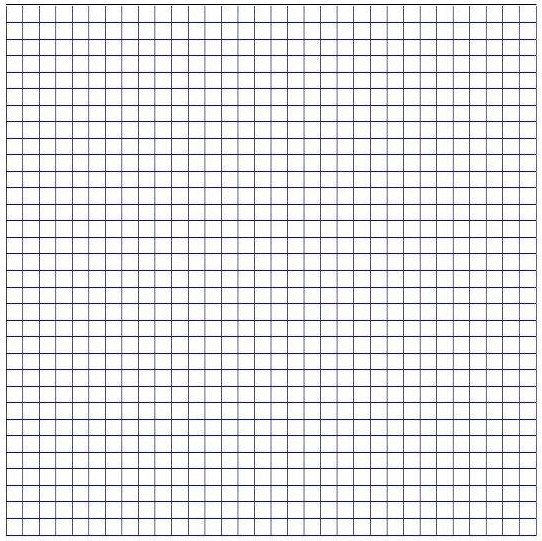} \quad
\includegraphics[scale=0.20]{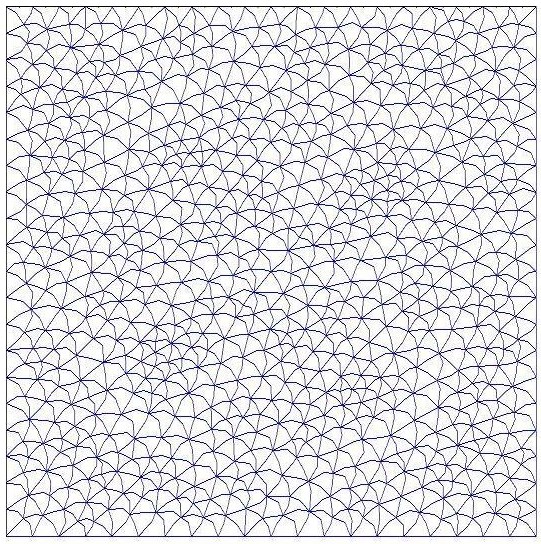} 
\caption{Example of polygonal meshes: $\mathcal{V}_{1/32}$,  $\mathcal{T}_{1/16}$, $\mathcal{Q}_{1/32}$, $\mathcal{W}_{1/20}$.}
\label{Figure1}
\end{figure}
For the generation of the Voronoi meshes we use the code Polymesher \cite{TPPM12}. 
The non convex WEB-like meshes are composed by hexagons, generated starting from the triangular meshes $\{ \mathcal{T}_h\}_h$ and randomly displacing the midpoint of each (non boundary) edge.

\begin{test}
\label{test1}
In this example we consider the Darcy problem \eqref{eq:darcy variazionale} where  we set $\K =I$, and we choose  the load term $\mathbf{f}$ in such a way that the analytical solution is 
\[
\mathbf{u}(x,y) =  -\pi \, \begin{pmatrix}
 \sin(\pi x) \cos(\pi y) \\
 \cos(\pi x) \sin(\pi y)
\end{pmatrix} \qquad 
p(x,y) = \cos(\pi x) \cos(\pi y).
\]
We analyse the practical performance of the virtual method by studying the errors versus the diameter $h$ of the meshes. In addition we compare the results obtained with the scheme of Section \ref{sec:3}, labeled as ``\text{div-free}'', with those obtained with the method in the Appendix, labeled as ``non div-free'' (in both cases we consider polynomial degrees $k=2$). 

\begin{remark}
\label{remark:comparison}
We notice that the "non div-free" method is a naive extension to the Darcy equation of the inf-sup stable scheme proposed in \cite{VEM-elasticity}. Since the scheme lacks a uniform ellipticity-on-the-kernel condition, it is not recommended for the problem under consideration. The purpose of the comparison is thus to underline the importance of the property $Z_h\subseteq Z$ (cf. Section 3.4) in the present context. 
\end{remark}

In Figure \ref{Figure2} and \ref{Figure3}, we display the results for the sequence of Voronoi    
meshes $\mathcal{V}_h$. In Figure \ref{Figure4} and \ref{Figure5}, we show the results for the sequence of meshes $\mathcal{T}_h$, while 
in Figure \ref{Figure6} and \ref{Figure7} we plot the results for the sequence of 
meshes $\mathcal{Q}_h$, finally in \ref{Figure8} and \ref{Figure9} we exhibit the results for the sequence of meshes $\mathcal{W}_h$.
 
\begin{figure}[!h]
\centering
\includegraphics[scale=0.3]{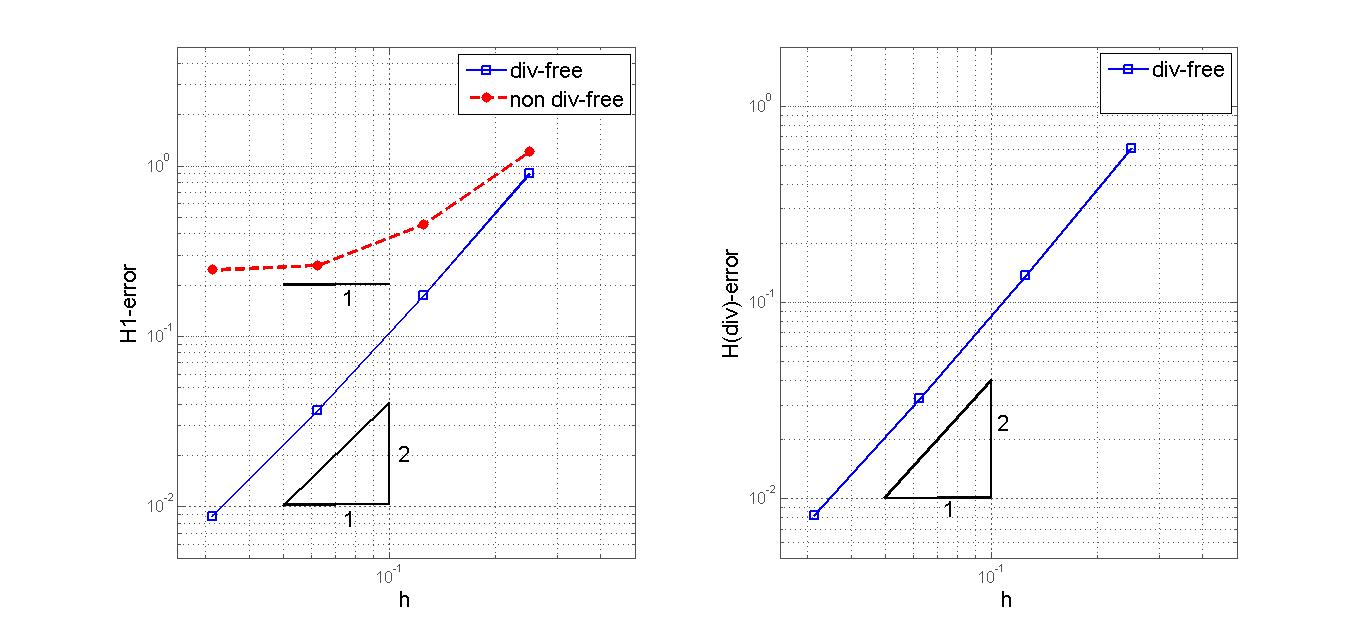}
\caption{Test \ref{test1}: behaviour of $H^1$ error and $H({\rm div})$ velocity error for the sequence of meshes $\mathcal{V}_h$ with $k=2$.}
\label{Figure2}
\end{figure}

\begin{figure}[!h]
\centering
\includegraphics[scale=0.3]{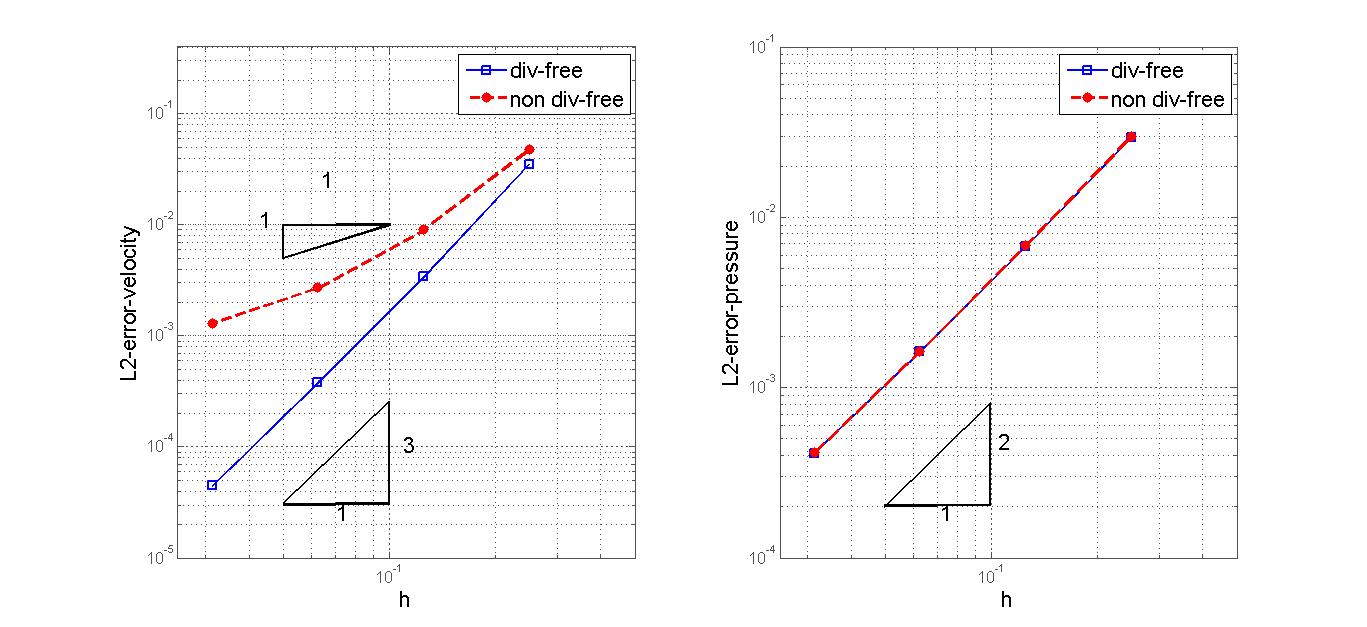}
\caption{Test \ref{test1}: behaviour of $L^2$ error both for the velocities and the pressures for the sequence of meshes $\mathcal{V}_h$ with $k=2$.}
\label{Figure3}
\end{figure}

\begin{figure}[!h]
\centering
\includegraphics[scale=0.3]{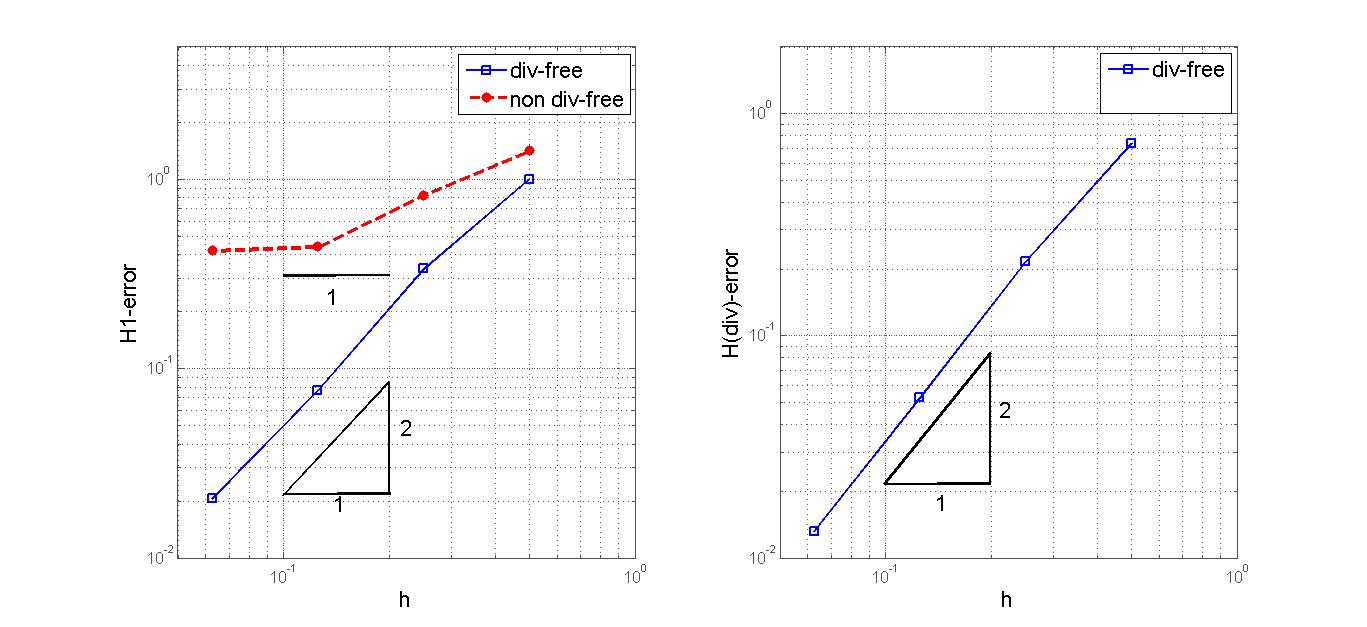}
\caption{Test \ref{test1}: behaviour of $H^1$ error and $H({\rm div})$ velocity  error for the sequence of meshes $\mathcal{T}_h$ with $k=2$.}
\label{Figure4}
\end{figure}

\begin{figure}[!h]
\centering
\includegraphics[scale=0.3]{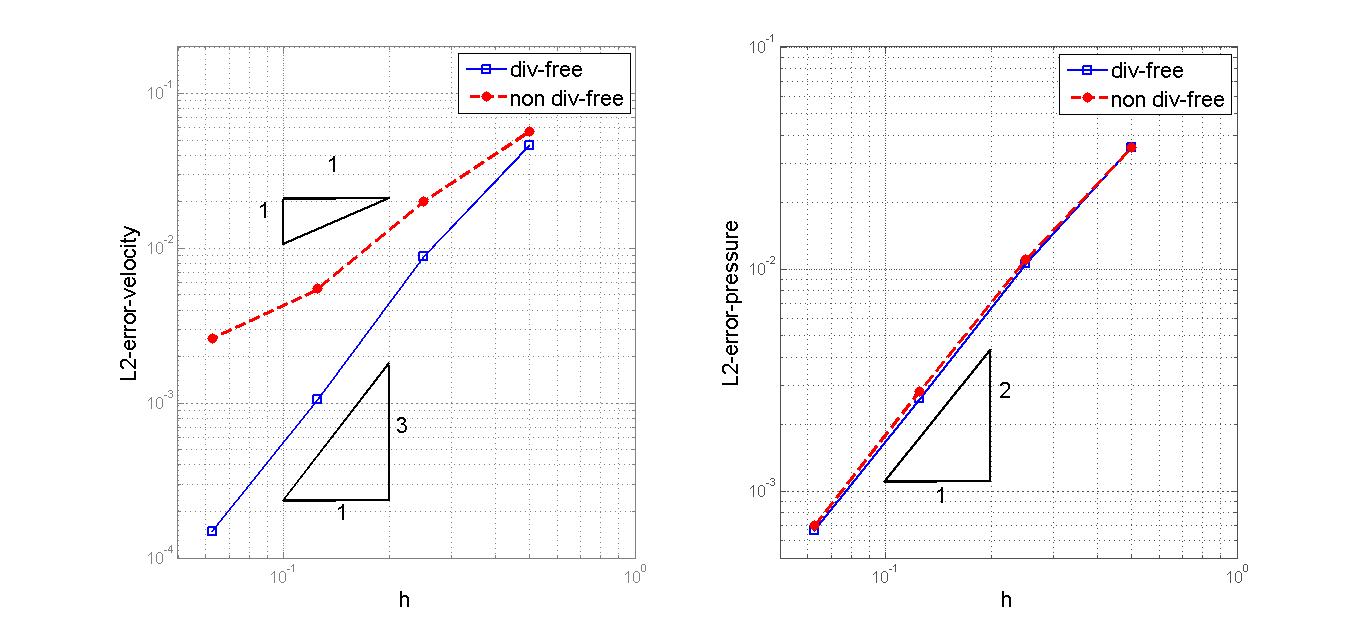}
\caption{Test \ref{test1}: behaviour of $L^2$ error both for the velocities and the pressures for the sequence of meshes $\mathcal{T}_h$ with $k=2$.}
\label{Figure5}
\end{figure}

\begin{figure}[!h]
\centering
\includegraphics[scale=0.3]{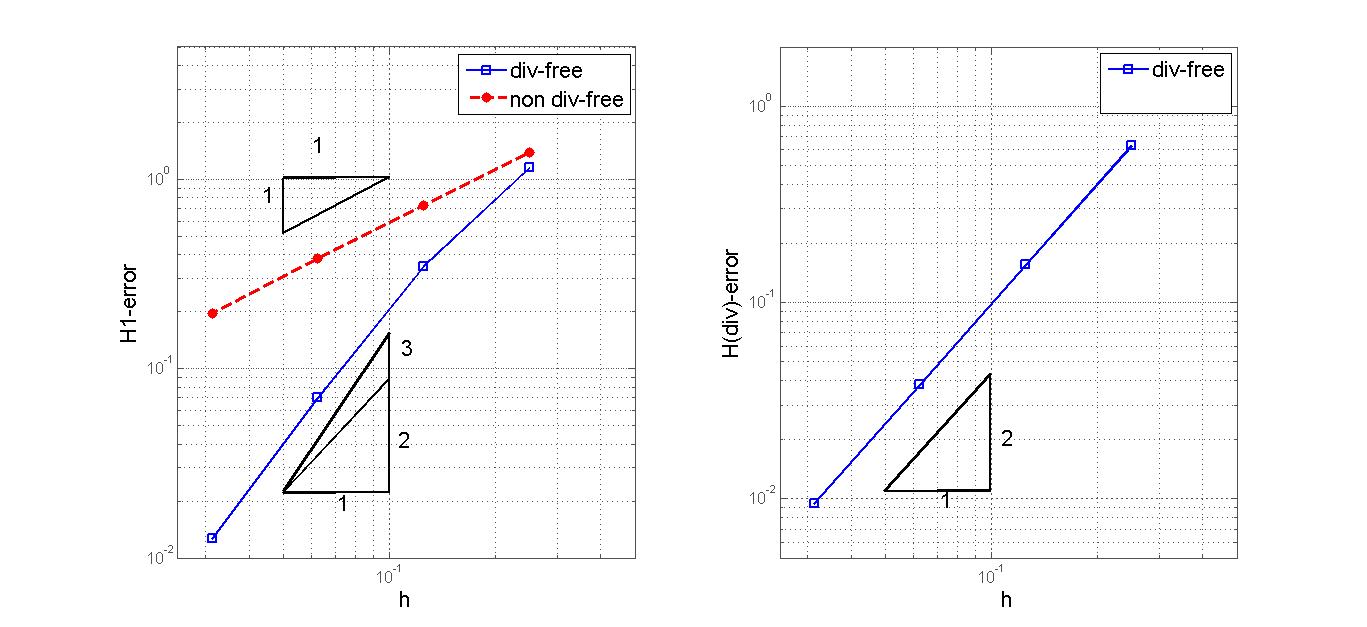}
\caption{Test \ref{test1}: behaviour of $H^1$ error and $H({\rm div})$ velocity error for the sequence of meshes $\mathcal{Q}_h$ with $k=2$.}
\label{Figure6}
\end{figure}

\begin{figure}[!h]
\centering
\includegraphics[scale=0.3]{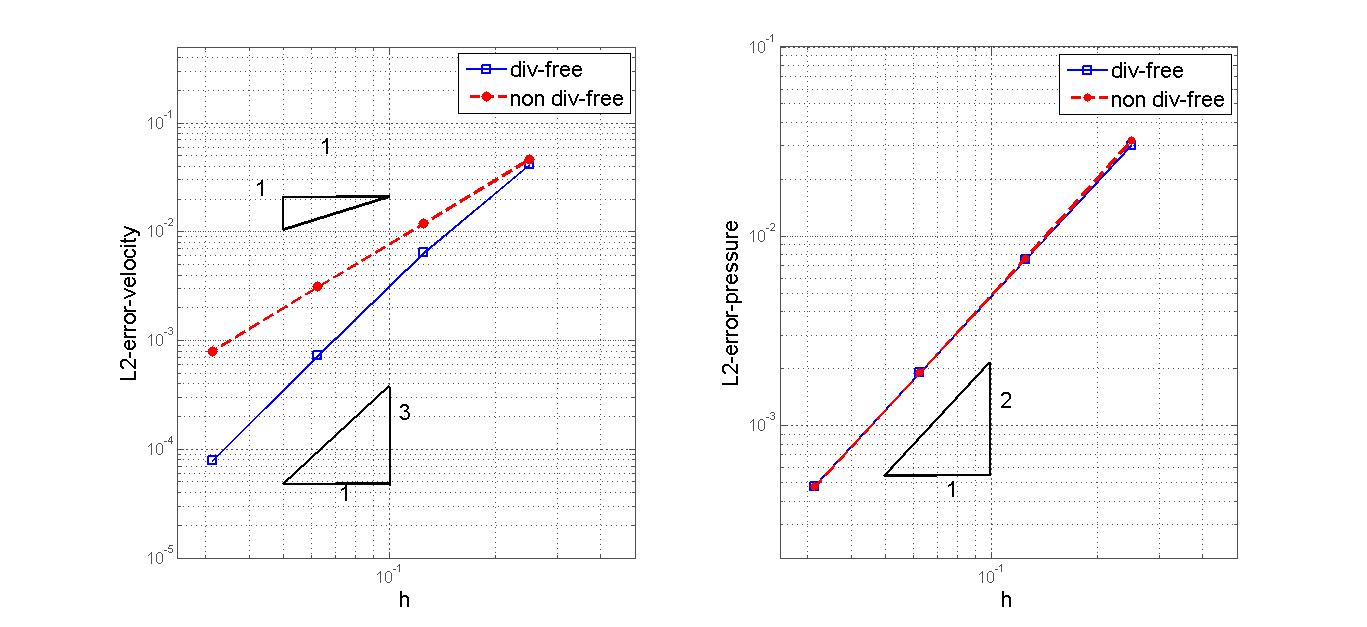}
\caption{Test \ref{test1}: behaviour of $L^2$ error both for the velocities and the pressures for the sequence of meshes $\mathcal{Q}_h$ with $k=2$.}
\label{Figure7}
\end{figure}

\begin{figure}[!h]
\centering
\includegraphics[scale=0.3]{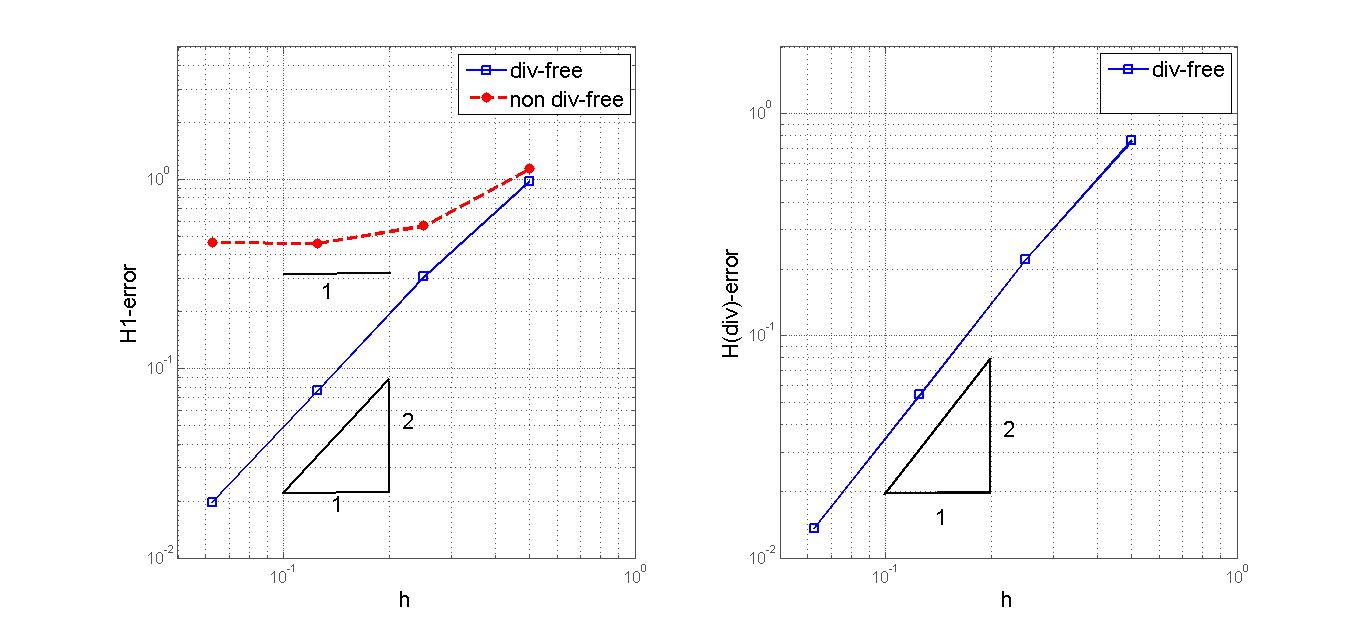}
\caption{Test \ref{test1}: behaviour of $H^1$ error and $H({\rm div})$ velocity error for the sequence of meshes $\mathcal{W}_h$ with $k=2$.}
\label{Figure8}
\end{figure}

\begin{figure}[!h]
\centering
\includegraphics[scale=0.3]{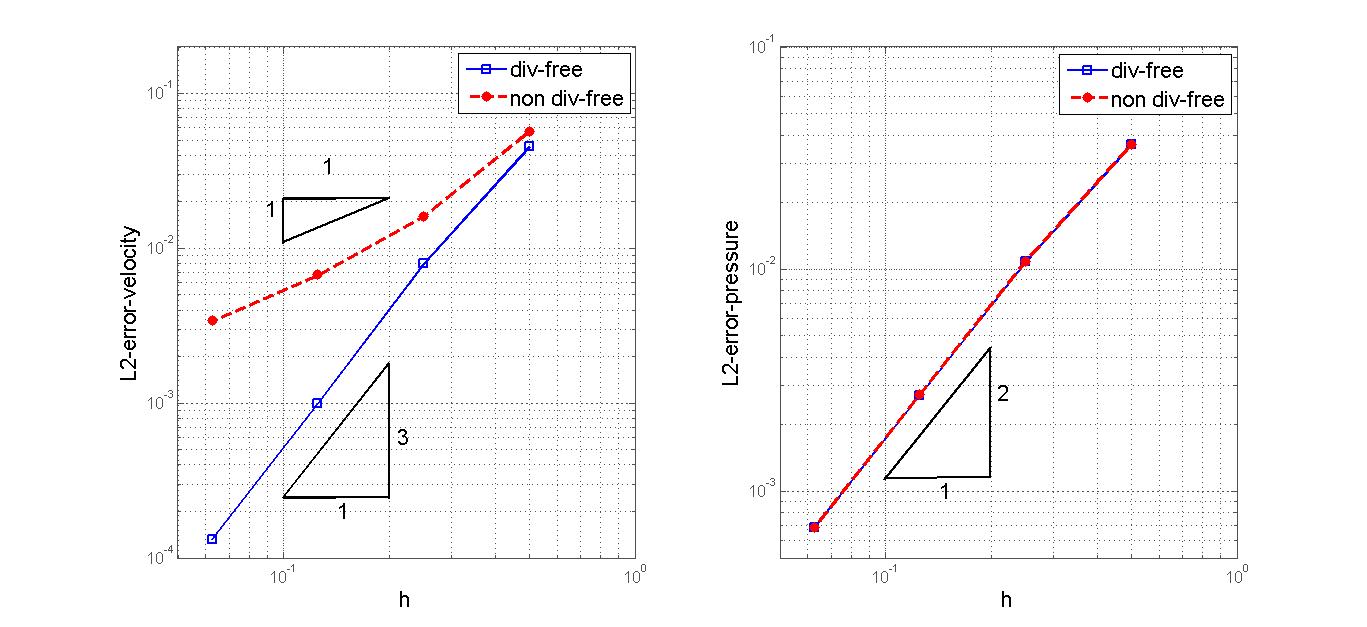}
\caption{Test \ref{test1}: behaviour of $L^2$ error both for the velocities and the pressures for the sequence of meshes $\mathcal{W}_h$ with $k=2$.}
\label{Figure9}
\end{figure}

We notice that the theoretical predictions of Section \ref{sec:4} and the Appendix are confirmed for both the $L^2$ norm and the $H({\rm div})$ norm. Note that for the $H({\rm div})$ norm we plot only the error for the ``div-free'' method since such scheme guarantees by construction, a better approximation of the divergence. Indeed  let $u_h$ (resp. $\widetilde{u}_h$) be the solution obtained with the ``div-free'' method (``non div-free'' method)  then $u_h$ satisfies
\[
 {\rm div} \, u_h =   \Pi^{0,K}_{k-1} f = \Pi^{0,K}_{k-1} ({\rm div} \, u) \qquad \text{for all $K \in \mathcal{T}_h$}
\]
whereas $\widetilde{u}_h$ satisfies the same equation only in a projected sense, i.e.
\[
 \Pi^{0,K}_{k-1}  ({\rm div} \, \widetilde{u}_h)  =  \Pi^{0,K}_{k-1}  f  = \Pi^{0,K}_{k-1}  ({\rm div} \,u)   \qquad \text{for all $K \in \mathcal{T}_h$.}
\]
We can observe that the convergence rate of the $L^2$ norm for the pressure is optimal also for the ``non div-free'' method as proved in the Appendix.
Finally, we can observe that using a square mesh decomposition holds a convergence rate that is slightly better than what predicted by the theory. 

\end{test}

\begin{test}
\label{test2}
In this example we test the Brinkman equation \eqref{eq:brinkman variazionale}  with different values of the fluid viscosity $\mu$ and fixed permeability tensor $\K = I$.  We choose the load term $\mathbf{f}$ and the Dirichlet boundary conditions in such a way that the analytical solution is  
\[
\mathbf{u}(x,y) =  \begin{pmatrix}
\sin(\pi x) \cos(\pi y)\\
-\cos(\pi x) \sin(\pi y)
\end{pmatrix} \qquad 
p(x,y) = x^2 y^2 - \frac{1}{9}.
\]
The aim of this test is to check the practical performance of the method introduced in Section \ref{sec:6} in the reduced formulation (c.f. \eqref{eq:brinkman reduced virtual}. 
In Table \ref{tabletest1} and Table \ref{tabletest2} we display the total amount  of DoFs and the errors for the family of meshes $\mathcal{V}_h$ choosing $k=2$ respectively for the ``div-free'' method (cf. Section \ref{sec:6}) and the ``non div-free'' method (cf. Reamrk \ref{remark:comparison} and the Appendix).
We observe that also in the limit case, when the equation becomes a singular perturbation of the classic Darcy equations (e.g. for ``small'' $\mu$), the proposed ``div-free'' method preserves the optimal order of accuracy.

\begin{table}[!h]
\centering
\begin{tabular}{ll*{3}{c}}
\toprule
&                         DoFs                      & ${\rm error}(\mathbf{u}, H^1)$   & ${\rm error}(\mathbf{u}, L^2)$       & ${\rm error}(p, L^2)$        \\
\midrule
\multirow{4}*{$\mu = 1e-01$}                             
& $182$       &$2.049871825e-01$                       &$9.414645391e-03$                             &$1.531569296e-02$  \\
& $702$       &$4.616835760e-02$                       &$8.379142208e-04$                             &  $2.796060666e-03$  \\
& $2794$      &$1.102679000e-02$                       &$9.416547836e-05$                             &  $5.322283997e-04$   \\
& $11210$     &$2.654465229e-03$                       &$1.104272204e-05$                             &  $1.261317758e-04$  \\
\midrule
\multirow{4}*{$\mu = 1e-04$}                        
& $182$       &$2.563406238e-01$                       &$1.296095515e-02$                              &  $6.431351247e-03$  \\
& $702$       &$5.462263791e-02$                       &$1.090600946e-03$                              &  $1.887150783e-03$  \\
& $2794$      &$1.246452741e-02$                       &$1.179870900e-04$                              &  $4.203480846e-04$   \\
& $11210$     &$2.790603844e-03$                       &$1.238676976e-05$                              &  $1.026912579e-04$  \\
\midrule
\multirow{4}*{$\mu = 1e-14$}                            
& $182$       &$2.572957705e-01$                       &$1.301886694e-02$                              &  $6.431351247e-03$  \\
& $702$       &$5.539413175e-02$                       &$1.111681710e-03$                              &  $1.887150783e-03$  \\
& $2794$      &$1.299961549e-02$                       &$1.253090639e-04$                              &  $4.203480846e-04$   \\
& $11210$     &$3.003059376e-03$                       &$1.394861018e-05$                              &  $1.026912579e-04$  \\
\bottomrule
\end{tabular}
\caption{Test \ref{test2}:  Error for the velocities and the pressures for the ``div-free'' method.}
\label{tabletest1}
\end{table}

\begin{table}[!h]
\centering
\begin{tabular}{ll*{3}{c}}  
\toprule
&                         DoFs                      & ${\rm error}(\mathbf{u}, H^1)$   & ${\rm error}(\mathbf{u}, L^2)$       & ${\rm error}(p, L^2)$        \\
\midrule
\multirow{4}*{$\mu = 1e-01$}                             
& $246$       &$2.074920846e-01$                       &$9.923688300e-03$                             &$1.597079423e-02$  \\
& $958$       &$4.660732867e-02$                       &$8.775839313e-04$                             &  $2.901160116e-03$  \\
& $3818$      &$1.113343018e-02$                       &$1.007780759e-04$                             &  $5.396527876e-04$   \\
& $15306$     &$2.677875898e-03$                       &$1.189832113e-05$                             &  $1.265284965e-04$  \\
\midrule
\multirow{4}*{$\mu = 1e-04$}                        
& $246$       &$2.477687988e-01$                       &$1.163768487e-02$                              &  $6.584759134e-03$  \\
& $958$       &$8.730792836e-02$                       &$1.749981712e-03$                              &  $1.922719665e-03$  \\
& $3818$      &$5.060500911e-02$                       &$5.351452940e-04$                              &  $4.258968745e-04$   \\
& $15306$     &$3.007229784e-02$                       &$1.597800852e-04$                              &  $1.036140919e-04$  \\
\midrule
\multirow{4}*{$\mu = 1e-14$}                            
& $246$       &$2.485968435e-01$                       &$1.168014371e-02$                              &  $6.581632957e-03$  \\
& $958$       &$9.149517231e-02$                       &$1.837032059e-03$                              &  $1.922719665e-03$  \\
& $3818$      &$6.422656370e-02$                       &$6.779428128e-04$                              &  $4.260009082e-04$   \\
& $15306$     &$6.289228510e-02$                       &$3.325972217e-04$                              &  $1.036819751e-04$  \\
\bottomrule
\end{tabular}
\caption{Test \ref{test2}:  Error for the velocities and the pressures for the ``non div-free'' method.}
\label{tabletest2}
\end{table}
\end{test}

\section{Acknowledgements}
The author wishes to thank L. Beir\~ao da Veiga and C. Lovadina for several interesting discussions and suggestions on the paper.
The author was partially supported by the European Research Council through
the H2020 Consolidator Grant (grant no. 681162) CAVE, Challenges and Advancements in Virtual Elements. This support is gratefully acknowledged.


\section*{Appendix: Non divergence-free virtual space}\label{sec:5}

We have built a new $H^1$-conforming (vector valued) virtual space for the velocity vector field different from the more standard one presented in \cite{VEM-elasticity} for the elasticity problem. 
The topic of the present section is to analyse the extension to the Darcy equation of the scheme of  \cite{VEM-elasticity}.
Even though the method should not be used for the Darcy problem (cf. Remark 6.1), the numerical experiments have shown an optimal error convergence rate for the pressure variable. In this Section, we theoretically explain such a behaviour, under a convexity assumption on $\Omega$ (essentially, a regularity assumption on the problem). To this end, we develop an inverse estimate for the VEM spaces which is interesting on its own, and can be used in other contexts.  
We briefly describe the method by making use of various tools from the Virtual Element technology, and  we  
refer the interested reader to the papers \cite{VEM-volley,VEM-enhanced,VEM-hitchhikers, VEM-elasticity}) for a deeper presentation.
We consider the local virtual space
\begin{equation*}
\widetilde{\mathbf{W}}_h^K := \left\{  
\mathbf{v} \in [H^1(K)]^2 \quad \text{s.t} \quad \mathbf{v}_{|{\partial K}} \in [\B_k(\partial K)]^2 \, , \quad
\boldsymbol{\Delta}    \mathbf{v}   \in [\Pk_{k-2}(K)]^2 \right\}
\end{equation*}
with local degrees of freedom $\mathbf{\widetilde{D}_V}$:
\begin{itemize}
\item $\mathbf{\widetilde{D}_V1}$: the values of $\mathbf{v}$ at each vertex of the polygon $K$,
\item $\mathbf{\widetilde{D}_V2}$: the values of $\mathbf{v}$ at $k-1$ distinct points of every edge $e \in \partial K$,
\item $\mathbf{\widetilde{D}_V3}$: the moments  of $\mathbf{v}$ up to order $k-2$, i.e.
\[
\int_K \mathbf{v} \, \cdot  \mathbf{q}_{k-2} \, {\rm d}K \qquad \text{for all $\mathbf{q}_{k-2} \in [\Pk_{k-2}(K)]^2$.}
\] 
\end{itemize} 
As observed in \cite{VEM-enhanced}, the DoFs $\mathbf{\widetilde{D}_V}$ allow us to compute the operator $\widetilde{{\Pi}}_k^{\nabla,K} \colon \widetilde{\mathbf{W}}_h^K \to [\Pk_k(K)]^2$ defined as the analogous of the $H^1$ semi-norm projection (c.f. \eqref{eq:Pi_k^K}).
For all $K \in\mathcal{T}_h$, the \textbf{augmented virtual local space} $\widetilde{\mathbf{U}}_h^K$ is defined by
\[
\widetilde{\mathbf{U}}_h^K = \left\{ \mathbf{v} \in [H^1(K)]^2  \quad \text{s.t.} \quad  \mathbf{v} \in [\B_k(\partial K)]^2, \, \boldsymbol{\Delta} \mathbf{v} \in [\Pk_{k}(K)]^2 \right\}.
\]
Now we define the \textbf{enhanced Virtual Element space}, the restriction $\mathbf{\widetilde{V}}_h^K$ of $\widetilde{\mathbf{U}}_h^K$ given by
\begin{equation*}
\widetilde{\mathbf{V}}_h^K := \left\{ \mathbf{v} \in \widetilde{\mathbf{U}}_h^K \quad \text{s.t.} \quad   \left(\mathbf{v} - \widetilde{{\Pi}}_k^{\nabla,K}\, \mathbf{v}, \, \mathbf{q}_k \right)_{[L^2(K)]^2} = 0 \quad \text{for all $\mathbf{q} \in [\Pk_{k}(K)/\Pk_{k-2} (K)]^2$} \right\} ,
\end{equation*}
where the symbol $\Pk_{k}(K)/\Pk_{k-2} (K)$ denotes the polynomials of degree $k$ living on $K$ that are $L^2-$orthogonal to all polynomials of degree $k-2$ on $K$.
The enhanced space $\widetilde{\mathbf{V}}_h^K$ has three fundamental properties (see \cite{VEM-enhanced} for a proof): 
\begin{itemize}
\item $[\Pk_k(K)]^2 \subseteq \widetilde{\mathbf{V}}_h^K$,
\item the set of linear operators $\mathbf{\widetilde{D}_V}$ constitutes a set of DoFs for the space $\widetilde{\mathbf{V}}_h^K$, 
\item the  $L^2$-projection operator $\widetilde{\Pi}^{0, K}_{k} \colon \widetilde{\mathbf{V}}_h^K \to [\Pk_k(K)]^2$ is exactly computable by the DoFs.
\end{itemize}
Recalling \eqref{eq:dimensione V_h^K} and from \cite{VEM-elasticity} it holds that ${\rm dim} \left(  \widetilde{\mathbf{V}}_h^K \right) = {\rm dim} \left(  \mathbf{V}_h^K \right)$. 
For the pressures we use the space of the piecewise polynomials $Q_h$ (c.f. \eqref{eq:Q_h}).
For what concerns the construction of the approximated bilinear forms, it is straightforward to see that
\begin{equation*}
b(\mathbf{v}, q) = \sum_{K \in \mathcal{T}_h} b^K(\mathbf{v}, q) = \sum_{K \in \mathcal{T}_h}  \int_K {\rm div} \, \mathbf{v} \, q \,{\rm d}K = - \int_K \mathbf{v} \cdot \nabla q \,{\rm d}K  + \int_{\partial K} q \, \mathbf{v} \cdot \mathbf{n}
\end{equation*}
is computable from the DoFs for all $\mathbf{v} \in \widetilde{\mathbf{V}}_h$, $q \in Q_h$. Moreover using standard arguments \cite{VEM-enhanced, vaccabis} we can define a computable bilinear form 
\begin{equation*}
\widetilde{a}_h^K(\cdot, \cdot) \colon \widetilde{\mathbf{V}}_h^K \times \widetilde{\mathbf{V}}_h^K \to \R
\end{equation*}
approximating the continuous form $a^K(\cdot, \cdot)$, and satisfying the $k$-consistency (c.f. \eqref{eq:consist}) and the stability properties (c.f. \eqref{eq:stabk}).  
Finally we define the global approximated bilinear form $\widetilde{a}_h(\cdot, \cdot) \colon\widetilde{\mathbf{V}}_h \times \widetilde{\mathbf{V}}_h \to \R$ by simply summing the local contributions.
By construction (see for instance \cite{VEM-enhanced}) the discrete bilinear form $\widetilde{a}_h(\cdot,\cdot)$ is 
(uniformly) stable with respect to the $L^2$ norm.
We are now ready to state the proposed discrete virtual element problem:
\begin{equation}
\label{eq:darcy classic virtual}
\left\{
\begin{aligned}
& \text{find $(\widetilde{\mathbf{u}}_h, \widetilde{p}_h) \in \widetilde{\mathbf{V}}_h \times Q_h$, such that} \\
& \widetilde{a}_h(\widetilde{\mathbf{u}}_h, \mathbf{v}_h) + b(\mathbf{v}_h, \widetilde{p}_h) = 0 \qquad & \text{for all $\mathbf{v}_h \in \widetilde{\mathbf{V}}_h$,} \\
&  b(\widetilde{\mathbf{u}}_h, q_h) = (f, q_h) \qquad & \text{for all $q_h \in Q_h$.}
\end{aligned}
\right.
\end{equation}

We shall first prove an inverse inequality for the virtual element functions in $\widetilde{\mathbf{V}}_h$.
\begin{lemma}
\label{lm:inverse estimate}
Under the assumption $\mathbf{(A1)}$, $\mathbf{(A2)}$,  let $K \in \mathcal{T}_h$ and let $\mathbf{v}_h \in \widetilde{\mathbf{V}}_h^K$. Then the following \textbf{inverse estimate} holds
\begin{equation}
\label{eq:inv}
|\mathbf{v}_h|_{1,k} \leq c_{inv} \, h_K^{-1} \, \|\mathbf{v}_h\|_{0,E}
\end{equation}
where the constant $c_{inv}$ is independent of $\mathbf{v}_h$, $h_K$ and $K$.
\end{lemma}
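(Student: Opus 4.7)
The plan is to combine the PDE characterization of functions in $\widetilde{\mathbf{V}}_h^K$ with a scaling argument, reducing the estimate to a finite-dimensional norm equivalence on a reference polygon of unit diameter. By definition of the augmented local space $\widetilde{\mathbf{U}}_h^K$, every $\mathbf{v}_h \in \widetilde{\mathbf{V}}_h^K \subseteq \widetilde{\mathbf{U}}_h^K$ is the unique solution of the elliptic boundary value problem
\[
-\boldsymbol{\Delta}\, \mathbf{v}_h = \mathbf{q}_k \text{ in } K, \qquad \mathbf{v}_h|_{\partial K} = \mathbf{g}_b,
\]
with polynomial data $\mathbf{q}_k \in [\Pk_k(K)]^2$ and $\mathbf{g}_b \in [\B_k(\partial K)]^2$; the enhancement condition that distinguishes $\widetilde{\mathbf{V}}_h^K$ from $\widetilde{\mathbf{U}}_h^K$ only adds a linear coupling between these two data, and does not change the structure of the solve.

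First I would rescale $K$ to $\widehat{K} := h_K^{-1}(K - \mathbf{x}_K)$, a polygon of unit diameter whose shape is controlled by the constants $\gamma$ and $c$ of $\mathbf{(A1)}$--$\mathbf{(A2)}$. Setting $\widehat{\mathbf{v}}_h(\widehat{\mathbf{x}}) := \mathbf{v}_h(\mathbf{x})$, a standard change of variables gives
\[
\|\widehat{\mathbf{v}}_h\|_{0,\widehat{K}} = h_K^{-1}\|\mathbf{v}_h\|_{0,K}, \qquad |\widehat{\mathbf{v}}_h|_{1,\widehat{K}} = |\mathbf{v}_h|_{1,K},
\]
so that \eqref{eq:inv} is equivalent to the scale-free bound $|\widehat{\mathbf{v}}_h|_{1,\widehat{K}} \le c_{\mathrm{inv}}\, \|\widehat{\mathbf{v}}_h\|_{0,\widehat{K}}$ for every $\widehat{\mathbf{v}}_h$ in the analogous VEM space built on $\widehat{K}$. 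Because this space has dimension bounded in terms of $\gamma$, $c$, $k$ only, and $\|\cdot\|_{0,\widehat{K}}$ is a genuine norm on it (constants have strictly positive $L^2$ norm on $\widehat{K}$), an elementary finite-dimensional argument produces a constant $C(\widehat{K})$ such that $|\widehat{\mathbf{v}}_h|_{1,\widehat{K}} \le C(\widehat{K})\|\widehat{\mathbf{v}}_h\|_{0,\widehat{K}}$ for every admissible $\widehat{\mathbf{v}}_h$.

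The hard part is to show that $C(\widehat{K})$ is bounded uniformly over the family of admissible reference polygons. The intended route is a compactness argument: the set of unit-diameter polygons fulfilling $\mathbf{(A1)}$--$\mathbf{(A2)}$ with the prescribed $\gamma$ and $c$ is compact in the Hausdorff topology, and the isomorphism $\widetilde{\mathbf{V}}_h^{\widehat{K}} \cong \R^{N}$ induced by the DoFs $\mathbf{\widetilde{D}_V}$, together with the Dirichlet reconstruction $(\mathbf{q}_k,\mathbf{g}_b)\mapsto \widehat{\mathbf{v}}_h$, depends continuously on $\widehat{K}$; hence so does $C(\widehat{K})$, and the supremum is attained.

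An alternative, more constructive route would start from the integration-by-parts identity
\[
|\mathbf{v}_h|_{1,K}^2 = \int_K \mathbf{q}_k \cdot \mathbf{v}_h\, {\rm d}K + \int_{\partial K} (\partial_{\mathbf{n}}\mathbf{v}_h) \cdot \mathbf{v}_h\, {\rm d}s,
\]
and would control $\|\mathbf{q}_k\|_{0,K}$ and $\|\partial_{\mathbf{n}} \mathbf{v}_h\|_{0,\partial K}$ by suitable negative powers of $h_K$ times $\|\mathbf{v}_h\|_{0,K}$, using polynomial inverse inequalities on $K$ and on its edges combined with a priori estimates for the Dirichlet solve. In either approach, the genuinely delicate step is producing shape-independent constants, which is where the assumptions $\mathbf{(A1)}$--$\mathbf{(A2)}$ must be used in full force.
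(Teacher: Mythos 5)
Your primary route (rescale to a unit-diameter polygon, invoke finite-dimensional norm equivalence, then take a supremum over shapes by compactness) has a genuine gap exactly at the step you yourself flag as ``the hard part''. The reduction to $\widehat{K}$ and the existence of some constant $C(\widehat{K})$ for each fixed $\widehat{K}$ are fine, but the uniformity of $C(\widehat{K})$ over the shape class \emph{is} the content of the lemma, and the compactness argument does not close it as stated. To run it you would need (at least) upper semicontinuity of the best constant along Hausdorff-convergent sequences of admissible polygons, which in turn requires stability of the local Dirichlet reconstruction $(\mathbf{q}_k,\mathbf{g}_b)\mapsto\widehat{\mathbf{v}}_h$ and of the enhancement constraint under domain perturbation, plus enough compactness to pass to the limit in the quotient $|\cdot|_{1,\widehat{K}}/\|\cdot\|_{0,\widehat{K}}$. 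The naive normalization ($\|\widehat{\mathbf{v}}_j\|_{0,\widehat{K}_j}=1$, seek a uniform bound on $|\widehat{\mathbf{v}}_j|_{1,\widehat{K}_j}$) is circular, since a uniform a priori $H^1$ bound is precisely what is being proved; the non-circular version (normalize the seminorm, show the $L^2$ norm cannot vanish in the limit) needs strong $H^1$ convergence across varying domains. None of this is elementary, and it is exactly why the paper avoids shape-compactness and instead follows the functional-analytic route of Lemmas 3.1 and 3.3 of the cited stability reference.

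Your alternative route is much closer to the paper's actual proof, but it contains a step that fails as written: $\partial_{\mathbf{n}}\mathbf{v}_h$ on $\partial K$ is \emph{not} a polynomial (only the trace $\mathbf{v}_h|_{\partial K}$ is), so $\|\partial_{\mathbf{n}}\mathbf{v}_h\|_{0,\partial K}$ cannot be controlled by edge-wise polynomial inverse inequalities, and an $L^2(\partial K)$ bound on the normal derivative of a virtual function is not available. The paper never estimates that quantity: after integrating by parts it bounds the first term using that $\boldsymbol{\Delta}\mathbf{v}_h$ is a polynomial, whence $\|\boldsymbol{\Delta}\mathbf{v}_h\|_{0,K}\le C h_K^{-1}|\mathbf{v}_h|_{1,K}$, and it handles the boundary term by duality, writing $\int_{\partial K}\mathbf{v}_h\,\boldsymbol{\nabla}\mathbf{v}_h\cdot\mathbf{n}_K\le \|\mathbf{v}_h\|_{1/2,\partial K}\,\sup_{\mathbf{w}}\langle\boldsymbol{\nabla}\mathbf{v}_h\cdot\mathbf{n}_K,\mathbf{w}\rangle/\|\mathbf{w}\|_{1/2,\partial K}$. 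The dual factor is bounded by $C|\mathbf{v}_h|_{1,K}$ via a uniformly bounded extension operator from $H^{1/2}(\partial K)$ to $H^1(K)$ (this is where $\mathbf{(A1)}$--$\mathbf{(A2)}$ enter quantitatively), while $\|\mathbf{v}_h\|_{1/2,\partial K}$ is controlled by a trace inequality plus an inverse estimate applied only to the genuinely polynomial boundary trace; a Young inequality then absorbs the resulting $\epsilon|\mathbf{v}_h|_{1,K}^2$ term into the left-hand side. That duality-plus-extension device is the ingredient missing from your sketch, and supplying it turns your second route into the paper's proof.
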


\begin{proof}
We only sketch the proof, since we follow the guidelines of Lemma 3.1 and 3.3 in \cite{2016stability}. Let $\mathbf{v}_h \in \widetilde{\mathbf{V}}_h^K$, then
\begin{equation}
\label{eq:inv1}
|\mathbf{v}_h|^2_{1,K} = \int_K \boldsymbol{\nabla}\mathbf{v}_h \cdot \boldsymbol{\nabla}\mathbf{v}_h = - \int_K  \boldsymbol{\Delta}\mathbf{v}_h \,\mathbf{v}_h + \int_{\partial K} \mathbf{v}_h \, \boldsymbol{\nabla}\mathbf{v}_h \cdot \mathbf{n}_K.   
\end{equation}
Under the assumption $\mathbf{(A1)}$, $\mathbf{(A2)}$ and by Lemma 3.3 in \cite{2016stability} we get
\begin{equation}
\label{eq:inv2}
- \int_K  \boldsymbol{\Delta}\mathbf{v}_h \,\mathbf{v}_h  \leq \|\boldsymbol{\Delta}\mathbf{v}_h\|_{0,E} \|\mathbf{v}_h\|_{0,E} \leq C_1 \, h_K^{-1} \, |\mathbf{v}_h|_{1,E}\|\mathbf{v}_h\|_{0,E} 
\end{equation}
where the constant $C_1$ is independent of $\mathbf{v}_h$, $h_K$ and $K$. For what concerns the second addend in the right side of \eqref{eq:inv1}, under the assumptions $\mathbf{(A1)}$, $\mathbf{(A2)}$, and using Lemma 3.1 in \cite{2016stability}, for all $\mathbf{w} \in [H^{1/2}(\partial K)]^2$ the following holds: there exists an extension $\widetilde{\mathbf{w}} \in [H^1(K)]^2$ of $\mathbf{w}$ such that
\begin{equation}
\label{eq:inv4}
h_K^{-1} \, \|\widetilde{\mathbf{w}}\|_{0, K} + |\widetilde{\mathbf{w}}|_{1, K} \leq C \,\|\mathbf{w}\|_{1/2, \partial K},
\end{equation}
where we consider the scaled norm
\begin{equation}
\label{eq:inv3}
\|\mathbf{w}\|_{1/2, \partial K} := h_K^{-1/2} \, \|\mathbf{w}\|_{0, \partial K} + |\mathbf{w}|_{1/2, \partial K}.
\end{equation}
By definition it holds
\[
\int_{\partial K} \mathbf{v}_h \, \boldsymbol{\nabla}\mathbf{v}_h \cdot \mathbf{n}_K \leq  \|\mathbf{v}_h\|_{1/2, \partial K} \, \sup_{\mathbf{w} \in [H^{1/2}(\partial K)]^2} \frac{\langle \boldsymbol{\nabla}\mathbf{v}_h \cdot \mathbf{n}_K , \mathbf{w}\rangle}{\|\mathbf{w}\|_{1/2, \partial K}}.
\]
Now, using the definition \eqref{eq:inv3}, an inverse estimate  ($\mathbf{v}_h$ is polynomial on $\partial K$) and the trace theorem \cite{MR2373954}, it holds that
\begin{equation}
\label{eq:inv5}
\begin{split}
\|\mathbf{v}_h\|_{1/2, \partial K} &= h_K^{-1/2} \, \|\mathbf{v}_h\|_{0, \partial K} + |\mathbf{v}_h|_{1/2, \partial K} \leq C_2 \, h_K^{-1/2} \, \|\mathbf{v}_h\|_{0, \partial K} \\
& \leq C_2 \,h_K^{-1/2} \, ( \, \|\mathbf{v}_h\|_{0, K})^{1/2}(h_K^{-1} \, \|\mathbf{v}_h\|_{0, K} +  |\mathbf{v}_h|_{1, K})^{1/2}
\\
& \leq \left(\epsilon + \frac{C_2^2}{\epsilon} \right)  h_K^{-1} \|\mathbf{v}_h\|_{0, K} + \epsilon |\mathbf{v}_h|_{1, K}.
\end{split}
\end{equation}
for any real $\epsilon >0$. For the last term, using  \eqref{eq:inv3}, \eqref{eq:inv4} and \eqref{eq:inv2} we get
\begin{equation}
\label{eq:inv6}
\begin{split}
\sup_{\mathbf{w} \in [H^{1/2}(\partial K)]^2} \frac{\langle \boldsymbol{\nabla}\mathbf{v}_h \cdot \mathbf{n}_K , \mathbf{w}\rangle}{\|\mathbf{w}_h\|_{1/2, \partial K}} &  \leq C \sup_{\widetilde{\mathbf{w}} \in [H^{1}(K)]^2} \frac{\langle \boldsymbol{\nabla}\mathbf{v}_h \cdot \mathbf{n}_K , \widetilde{\mathbf{w}}\rangle}{h_K^{-1} \, \|\widetilde{\mathbf{w}}\|_{0, K} + |\widetilde{\mathbf{w}}|_{1, K}} \\
& \leq C\,  \left( \sup_{\widetilde{\mathbf{w}} \in [H^{1}(K)]^2} \frac{\int_K \boldsymbol{\Delta}\mathbf{v}_h \,\widetilde{\mathbf{w}} }{h_K^{-1} \, \|\widetilde{\mathbf{w}}\|_{0, K} + |\widetilde{\mathbf{w}}|_{1, K}} + 
\sup_{\widetilde{\mathbf{w}}\in [H^{1}(K)]^2} \frac{\int_K \boldsymbol{\nabla}\mathbf{v}_h \cdot \boldsymbol{\nabla}\widetilde{\mathbf{w}}}{h_K^{-1} \, \|\widetilde{\mathbf{w}}\|_{0, K} + |\widetilde{\mathbf{w}}|_{1, K}} \right) \\
& \leq C \,  \left( \sup_{\widetilde{\mathbf{w}} \in [H^{1}(K)]^2} \frac{\int_K \boldsymbol{\Delta}\mathbf{v}_h \,\widetilde{\mathbf{w}}  }{h_K^{-1} \, \|\widetilde{\mathbf{w}}\|_{0, K} } + 
\sup_{\widetilde{\mathbf{w}} \in [H^{1}(K)]^2} \frac{\int_K \boldsymbol{\nabla}\mathbf{v}_h \cdot \boldsymbol{\nabla}\widetilde{\mathbf{w}}}{ |\widetilde{\mathbf{w}}|_{1, K}} \right) \\
& \leq C \, \left( h_K \, \|\boldsymbol{\Delta}\mathbf{v}_h\|_{0, K} + |\mathbf{v}_h|_{1,K} \right) \leq C_3 \, |\mathbf{v}_h|_{1,K}.
\end{split}
\end{equation}
From \eqref{eq:inv5} and \eqref{eq:inv6} we can conclude that
\begin{equation}
\label{eq:inv7}
\int_{\partial K} \mathbf{v}_h \, \boldsymbol{\nabla}\mathbf{v}_h \cdot \mathbf{n}_K \leq  \left( \left(\epsilon + \frac{C_2^2}{\epsilon} \right)  h_K^{-1} \|\mathbf{v}_h\|_{0, K} + \epsilon |\mathbf{v}_h|_{1, K} \right) \, C_3|\mathbf{v}_h|_{1, K}
\end{equation}
Finally, choosing $\epsilon = \frac{1}{2C_3}$ and collecting \eqref{eq:inv2} and \eqref{eq:inv7} in \eqref{eq:inv1} we have
\[
\frac{1}{2}|\mathbf{v}_h|_{1, K} \leq \left( C_1 + \frac{1}{2} + 2 C_2^2 C_3^2\right) \, h_K^{-1} \|\mathbf{v}_h\|_{0, K}
\]
from which follows the thesis.
\end{proof}

Let us analyse the theoretical properties of the method. We consider the discrete kernel:
\begin{equation*}
\widetilde{\mathbf{Z}}_h := \{ \mathbf{v}_h \in \widetilde{\mathbf{V}}_h \quad \text{s.t.} \quad b(\mathbf{v}_h, q_h) = 0 \quad \text{for all $q_h \in Q_h$}\} = \{ \mathbf{v}_h \in \widetilde{\mathbf{V}}_h \quad \text{s.t.} \quad \Pi_{k-1}^{0,K} ({\rm div} \mathbf{v}_h) = 0 \quad \text{for all $K \in \mathcal{T}_h$}\},
\end{equation*}
therefore the divergence-free property is satisfied only in a relaxed (projected) sense. As a consequence the bilinear form $\widetilde{a}_h(\cdot, \cdot)$ is \textbf{not uniformly coercive} on the discrete kernel $\widetilde{\mathbf{Z}}_h$; nevertheless it holds the following $h$-dependent coercivity property
\begin{equation}
\label{eq:noncorcivity}
\widetilde{a}_h(\mathbf{v}_h, \,\mathbf{v}_h) \geq \alpha \, \alpha_* \, \|\mathbf{v}_h\|_0^2 \geq  C \,  h^2 \, \|\mathbf{v}_h\|^2_{\mathbf{V}}
\end{equation}
that can be derived by using inverse estimate \eqref{eq:inv}.
Recalling that $\widetilde{a}_h(\cdot, \cdot)$ is continuous with respect the  ${\mathbf{V}}$ norm and that the discrete inf-sup condition is fulfilled  \cite{VEM-elasticity}
\begin{equation}
\label{eq:inf-sup tilde}
\sup_{\mathbf{v}_h \in \widetilde{\mathbf{V}}_h \, \mathbf{v}_h \neq \mathbf{0}} \frac{b(\mathbf{v}_h, q_h)}{ \|\mathbf{v}_h\|_{\mathbf{V}}} \geq \tilde{\beta} \|q_h\|_Q \qquad \text{for all $q_h \in Q_h$}
\end{equation}
problem \eqref{eq:darcy classic virtual} has a unique solution but we expect a worse order of accuracy since the bilinear form  $\widetilde{a}_h(\cdot, \cdot)$ is not uniformly stable. In fact we have the following convergence results that are, perhaps surprisingly, still optimal in the pressure variable.

\begin{theorem}
\label{thmclassic}
Let $(\mathbf{u}, p) \in \mathbf{V} \times Q$ be the solution of problem \eqref{eq:darcy variazionale} and $(\widetilde{\mathbf{u}}_h, \widetilde{p}_h) \in \widetilde{\mathbf{V}}_h \times Q_h$ be the solution of problem \eqref{eq:darcy classic virtual}. Then
\[
\begin{gathered}
\|\mathbf{u} -\widetilde{\mathbf{u}}_h \|_{0} \leq C \, h^{k-1} (|p|_{k} + h^2 |u|_{k+1}) , \qquad \text{and} \qquad \|\mathbf{u} - \widetilde{\mathbf{u}}_h \|_{\mathbf{V}} \leq C \,h^{k-2} (|p|_{k} + h^2 |u|_{k+1}).
\end{gathered}
\]
Assuming further that $\Omega$ is convex, the following estimate holds: 
\[
\|p - \widetilde{p}_h\|_{Q} \leq C \, h^{k} (|p|_{k} + h^2 |u|_{k+1}).
\]  
\end{theorem}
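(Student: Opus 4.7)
The plan is to follow the standard mixed-method pattern, adapted to the present setting in which $\widetilde{\mathbf{Z}}_h \not\subseteq \mathbf{Z}$. I introduce a Fortin-type interpolant $\mathbf{u}_I \in \widetilde{\mathbf{V}}_h$ of $\mathbf{u}$ satisfying $\Pi^{0,K}_{k-1}({\rm div}(\mathbf{u} - \mathbf{u}_I)) = 0$ on each element $K$ (whose existence follows from the discrete inf-sup condition \eqref{eq:inf-sup tilde}, arguing as in \eqref{eq:divinterpolant}--\eqref{eq:stability}), together with the piecewise polynomial approximant $\mathbf{u}_{\pi}$ from Lemma \ref{lm:scott}; I set $\boldsymbol{\delta}_h := \mathbf{u}_I - \widetilde{\mathbf{u}}_h$ and observe that by construction $\boldsymbol{\delta}_h \in \widetilde{\mathbf{Z}}_h$.

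For the velocity, I would first establish the $L^2$ bound by using the uniform $L^2$-stability of $\widetilde{a}_h$:
\[
\alpha\,\alpha_* \|\boldsymbol{\delta}_h\|_0^2 \leq \widetilde{a}_h(\boldsymbol{\delta}_h, \boldsymbol{\delta}_h) = \widetilde{a}_h(\mathbf{u}_I, \boldsymbol{\delta}_h),
\]
where the last identity uses that $b(\boldsymbol{\delta}_h, \widetilde{p}_h)=0$. Inserting $\mathbf{u}_{\pi}$, invoking $k$-consistency, and replacing $a(\mathbf{u}, \boldsymbol{\delta}_h)$ by $-b(\boldsymbol{\delta}_h, p)$ via \eqref{eq:darcy variazionale}, the right-hand side splits into (i) $L^2$-continuity contributions bounded by $C h^{k+1}|\mathbf{u}|_{k+1}\|\boldsymbol{\delta}_h\|_0$, and (ii) the pressure pollution $b(\boldsymbol{\delta}_h, p) = b(\boldsymbol{\delta}_h, p - \Pi^{0,K}_{k-1} p)$ (by $\widetilde{\mathbf{Z}}_h$-membership). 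For (ii) I combine the inverse inequality of Lemma \ref{lm:inverse estimate}, which gives $\|{\rm div}\,\boldsymbol{\delta}_h\|_0 \leq C h^{-1}\|\boldsymbol{\delta}_h\|_0$, with the standard bound $\|p - \Pi^{0,K}_{k-1} p\|_0 \leq C h^k |p|_k$, obtaining $C h^{k-1}|p|_k\|\boldsymbol{\delta}_h\|_0$. Dividing through yields $\|\boldsymbol{\delta}_h\|_0 \leq C h^{k-1}(|p|_k + h^2 |\mathbf{u}|_{k+1})$, and the first estimate follows by triangle inequality. A further application of Lemma \ref{lm:inverse estimate} gives $\|\boldsymbol{\delta}_h\|_{\mathbf{V}} \leq C h^{-1}\|\boldsymbol{\delta}_h\|_0$, delivering the $\mathbf{V}$-norm estimate.

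For the pressure, with $\Omega$ convex, I run an Aubin--Nitsche duality. Let $p_\pi := \Pi^{0,K}_{k-1} p \in Q_h$ and $g := p_\pi - \widetilde{p}_h \in Q_h$, and let $(\mathbf{w}, s)$ solve the dual Darcy problem $\K^{-1}\mathbf{w} + \nabla s = \mathbf{0}$, ${\rm div}\, \mathbf{w} = g$ in $\Omega$, with $\mathbf{w}\cdot\mathbf{n} = 0$ on $\partial\Omega$. Convexity of $\Omega$ delivers the elliptic regularity bound $\|\mathbf{w}\|_1 + \|s\|_2 \leq C \|g\|_0$. Denoting by $\mathbf{w}_h \in \widetilde{\mathbf{V}}_h$ the Fortin interpolant of $\mathbf{w}$, the Fortin property together with $g \in Q_h$ gives $\|g\|_0^2 = b(\mathbf{w}, g) = b(\mathbf{w}_h, g)$. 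Expanding $b(\mathbf{w}_h, g) = b(\mathbf{w}_h, p_\pi - p) + b(\mathbf{w}_h, p - \widetilde{p}_h)$, the first addend is $O(h^k |p|_k \|g\|_0)$ by direct approximation of $p$ and boundedness of $\|\mathbf{w}_h\|_{\mathbf{V}}$. For the second, subtracting the two sets of equations yields $b(\mathbf{w}_h, p - \widetilde{p}_h) = \widetilde{a}_h(\widetilde{\mathbf{u}}_h, \mathbf{w}_h) - a(\mathbf{u}, \mathbf{w}_h)$; inserting $\mathbf{u}_I$ and $\mathbf{u}_{\pi}$ and exploiting $k$-consistency reduces this to $-\widetilde{a}_h(\boldsymbol{\delta}_h, \mathbf{w}_h)$ up to harmless consistency errors of order $C h^{k+1}|\mathbf{u}|_{k+1}\|g\|_0$.

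The delicate step, and the main obstacle, is the treatment of $\widetilde{a}_h(\boldsymbol{\delta}_h, \mathbf{w}_h)$. Approximating $\mathbf{w}_h$ by a piecewise polynomial $\mathbf{w}_{\pi}$ of $\mathbf{w}$ and using $k$-consistency plus $L^2$-continuity, the residual reduces to the continuous form $a(\boldsymbol{\delta}_h, \mathbf{w})$, which by the dual momentum equation equals $-b(\boldsymbol{\delta}_h, s)$. Since $\boldsymbol{\delta}_h \in \widetilde{\mathbf{Z}}_h$ is orthogonal to $Q_h$, I may replace $s$ by $s - \Pi^{0,K}_{k-1} s$, whence $|b(\boldsymbol{\delta}_h, s)| \leq \|{\rm div}\,\boldsymbol{\delta}_h\|_0 \, \|s - \Pi^{0,K}_{k-1} s\|_0$. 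This is precisely the point where the $h^{-1}$ loss produced by the inverse inequality on $\|{\rm div}\,\boldsymbol{\delta}_h\|_0$ must be absorbed: $H^2$-regularity of $s$ (available only by convexity of $\Omega$) gives $\|s - \Pi^{0,K}_{k-1} s\|_0 \leq C h^2 |s|_2 \leq C h^2 \|g\|_0$, so that two powers of $h$ overcome the $h^{-1}$ of the inverse estimate. Combined with the velocity bound $\|\boldsymbol{\delta}_h\|_0 \leq C h^{k-1}(|p|_k + h^2 |\mathbf{u}|_{k+1})$, this produces the sharp contribution $C h^k(|p|_k + h^2 |\mathbf{u}|_{k+1})\|g\|_0$. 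Summing all contributions yields $\|g\|_0 \leq C h^k(|p|_k + h^2 |\mathbf{u}|_{k+1})$, and the triangle inequality $\|p - \widetilde{p}_h\|_Q \leq \|p - p_\pi\|_Q + \|g\|_0$ closes the argument. The hard part is therefore the careful bookkeeping of this $h^{-1}$ versus $h^2$ cancellation, which is exactly what allows the $h^{k-1}$-degraded velocity error to coexist with an $h^k$-optimal pressure error.
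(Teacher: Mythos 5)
Your proposal is correct and follows essentially the same path as the paper's proof: a Fortin-type interpolant plus stability/consistency for the $L^2$ velocity bound, with the pressure-pollution term $b(\boldsymbol{\delta}_h, p-q_h)$ controlled through the inverse estimate of Lemma \ref{lm:inverse estimate}, the inverse estimate again for the $\mathbf{V}$-norm bound, and an Aubin--Nitsche duality in which the $H^2$ regularity of the dual solution supplies the $h^{2}$ that absorbs the $h^{-1}$ loss. The only (immaterial) difference is that you pose the dual problem in mixed Darcy form with a Neumann-type boundary condition, whereas the paper uses the primal Dirichlet Poisson problem $\Delta\phi=\rho_h$ and works with $\nabla\phi$ and its interpolant; your three terms correspond exactly to the paper's $\mu_1$, $\mu_2$, $\mu_3$.
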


\begin{proof}
As observed in the proof of Theorem \ref{thm5} the inf-sup condition \eqref{eq:inf-sup tilde} implies the existence of a function $\widetilde{\mathbf{u}}_I \in  \widetilde{\mathbf{V}}_h$ such that 
\begin{gather}
\label{eq:divinterpolantbis}
\Pi_{k-1}^{0,K} ({\rm div} \, \widetilde{\mathbf{u}}_I) = \Pi_{k-1}^{0,K}({\rm div} \, \mathbf{u}) \qquad \text{for all $K \in \mathcal{T}_h$,} \\
\label{eq:stabilitybis}
\|\mathbf{u} - \widetilde{\mathbf{u}}_I\|_{\mathbf{V}} \leq C \, \inf_{\mathbf{v}_h \in \widetilde{\mathbf{V}}_h}\|\mathbf{u} - \mathbf{v}_h\|_{\mathbf{V}}
\end{gather}
Now let us set $\boldsymbol{\delta}_h = \widetilde{\mathbf{u}}_h - \widetilde{\mathbf{u}}_I$. For what concerns the $L^2$ norm, using the stability of the bilinear form $\widetilde{a}_h(\cdot, \cdot)$ and \eqref{eq:darcy classic virtual} together with \eqref{eq:darcy variazionale}
\begin{equation}
\label{eq:mutilde}
\begin{split}
\alpha_* \, \alpha \, \|\boldsymbol{\delta}_h\|^2_0 & \leq \alpha_* \, a(\boldsymbol{\delta}_h, \, \boldsymbol{\delta}_h) \leq \widetilde{a}_h(\boldsymbol{\delta}_h, \boldsymbol{\delta}_h) = \widetilde{a}_h(\widetilde{\mathbf{u}}_h, \boldsymbol{\delta}_h) - \widetilde{a}_h(\widetilde{\mathbf{u}}_I, \boldsymbol{\delta}_h) \\
& = - b(\boldsymbol{\delta}_h, \widetilde{p}_h) - a(\mathbf{u}, \boldsymbol{\delta}_h) + a(\mathbf{u}, \boldsymbol{\delta}_h) - \widetilde{a}_h(\widetilde{\mathbf{u}}_I, \boldsymbol{\delta}_h)  \\
& =   b(\boldsymbol{\delta}_h, p - \widetilde{p}_h) + ( a(\mathbf{u}, \boldsymbol{\delta}_h) - \widetilde{a}_h(\widetilde{\mathbf{u}}_I, \boldsymbol{\delta}_h)) =:
\mu_1(\boldsymbol{\delta}_h) + \mu_2(\boldsymbol{\delta}_h).
\end{split}
\end{equation}
By \eqref{eq:darcy classic virtual} and property \eqref{eq:divinterpolantbis}, it is straightforward to see that
\[
\Pi_{k-1}^{0,K} ({\rm div} \, \widetilde{\mathbf{u}}_h) =  \Pi_{k-1}^{0,K} ({\rm div} \, \mathbf{u}) = \Pi_{k-1}^{0,K} ({\rm div} \, \widetilde{\mathbf{u}}_I) \qquad \text{for all $K \in \mathcal{T}_h$} 
\]
so that $\boldsymbol{\delta}_h \in \widetilde{\mathbf{Z}}_h$. Therefore
\[
\mu_1(\boldsymbol{\delta}_h) = b(\boldsymbol{\delta}_h, p - \widetilde{p}_h)=  b(\boldsymbol{\delta}_h, p) = b(\boldsymbol{\delta}_h, p - q_h)  
\]
for all $q_h \in Q_h$. Using the inverse estimate \eqref{eq:inv} and standard approximation theory we get
\begin{equation}
\label{eq:mu1tilde}
|\mu_1(\boldsymbol{\delta}_h)| \leq C \|\boldsymbol{\delta}_h\|_{\mathbf{V}} \inf_{q_h \in Q_h} \|p - q_h\|_Q \leq C\, h^{-1} \,\|\boldsymbol{\delta}_h\|_0 \,h^{k}\, |p|_{k} = C \,h^{k-1} \,|p|_{k} \,  \|\boldsymbol{\delta}_h\|_0.
\end{equation}
By standard technique in VEM convergence theory, it holds that 
\begin{equation}
\label{eq:mu2tilde}
|\mu_2(\boldsymbol{\delta}_h)| \leq h^{k+1} \, |u|_{k+1} \, \|\boldsymbol{\delta}_h\|_0.
\end{equation}
Collecting  \eqref{eq:mu1tilde} and \eqref{eq:mu2tilde}  in \eqref{eq:mutilde} we get the $L^2$ estimate. Whereas the $\mathbf{V}$ norm estimate follows from an inverse estimate \eqref{eq:inv}.

For what concerns the estimate on the pressure, let  $p_{\pi}$ the piecewise polynomial with respect to $\mathcal{T}_h$ defined by  $p_{\pi}= \Pi_{k-1}^{0,K} \, p$ for all $K \in \mathcal{T}_h$. Let us set
\[
\boldsymbol{\chi}_h := \mathbf{u} - \widetilde{\mathbf{u}}_h, \qquad z:= p_{\pi} - p \qquad \rho_h:= p_{\pi} - \widetilde{p}_h 
\] 
From \eqref{eq:darcy variazionale} and \eqref{eq:darcy classic virtual}, it is straightforward to see that the couple $(\boldsymbol{\chi}_h, \rho_h)$ solves the Darcy problem
\begin{equation}
\label{eq:auxiliary}
\left\{
\begin{aligned}
& a(\boldsymbol{\chi}_h, \mathbf{v}_h) + b(\mathbf{v}_h, \rho_h) =  (\widetilde{a}_h(\widetilde{\mathbf{u}}_h, \, \mathbf{v}_h) - a(\widetilde{\mathbf{u}}_h, \, \mathbf{v}_h)) + b(\mathbf{v}_h, z) \qquad & \text{for all $\mathbf{v}_h \in \widetilde{\mathbf{V}_h}$,} \\
&  b(\boldsymbol{\chi}_h, q_h) = 0 \qquad & \text{for all $q_h \in Q_h$.}
\end{aligned}
\right.
\end{equation}
To prove the estimate for the pressure we employ the usual duality argument. Let therefore $\phi$ be the solution of the auxiliary problem 
\[
\left \{
\begin{aligned}
& \Delta \phi = \rho_h  \qquad & \text{on $\Omega$}\\
& \phi = 0               \qquad & \text{on $\partial \Omega$}
\end{aligned}
\right .
\]
that, due to the convexity assumption, satisfies 
\begin{equation}
\label{eq:duality}
\|\phi\|_2 \leq C \, \|\rho_h\|_0
\end{equation}
where the constant $C$ depends only on $\Omega$.
For all $\mathbf{v} \in \widetilde{\mathbf{V}}_h$  let us denote with $\mathbf{v}_I$ its interpolant defined in \eqref{eq:divinterpolantbis} and \eqref{eq:stabilitybis}. Therefore Green formula together with \eqref{eq:auxiliary} yields
\begin{equation}
\label{eq:mupressure}
\begin{split}
\|\rho_h\|^2_0  & = (\rho_h, \, \Delta \phi) = b(\nabla \phi, \, \rho_h) = b((\nabla \phi)_I, \, \rho_h)  \\
               & = (\widetilde{a}_h(\widetilde{\mathbf{u}}_h, \, (\nabla \phi)_I) - a(\widetilde{\mathbf{u}}_h, \, (\nabla \phi)_I)) + b((\nabla \phi)_I, \, z) -   a(\boldsymbol{\chi}_h, (\nabla \phi)_I) \\
               & =: \mu_1((\nabla \phi)_I) + \mu_2((\nabla \phi)_I) +  \mu_3((\nabla \phi)_I). 
\end{split}
\end{equation}
We analyse separately the three terms. For the first one, using the consistency property of $\widetilde{a}_h(\cdot, \cdot)$, the polynomial approximation of $\mathbf{u}$ and $\nabla \phi$, the estimate on the velocity error and \eqref{eq:duality} we get
\begin{equation}
\label{eq:mu1pressure}
\begin{split}
\mu_1((\nabla \phi)_I) &= \widetilde{a}_h(\widetilde{\mathbf{u}}_h, \, (\nabla \phi)_I) - a(\widetilde{\mathbf{u}}_h, \, (\nabla \phi)_I) \\
& = \sum_{K \in \mathcal{T}_h} \left ( \widetilde{a}_h^K(\widetilde{\mathbf{u}}_h, \, (\nabla \phi)_I) - a^K(\widetilde{\mathbf{u}}_h, \, (\nabla \phi)_I) \right) \\
& =  \sum_{K \in \mathcal{T}_h} \left(  \widetilde{a}^K(\widetilde{\mathbf{u}}_h - \mathbf{u}_{\pi}, \, (\nabla \phi)_I -  (\nabla \phi)_{\pi}) - a^K(\widetilde{\mathbf{u}}_h - \mathbf{u}_{\pi}, \, (\nabla \phi)_I  -  (\nabla \phi)_{\pi}) \right) \\
& \leq C \,  \sum_{K \in \mathcal{T}_h}   \|\widetilde{\mathbf{u}}_h - \mathbf{u}_{\pi}\|_{0,K} \|(\nabla \phi)_I -  (\nabla \phi)_{\pi}\|_{0,K}   \\
& \leq C \,  \sum_{K \in \mathcal{T}_h}   (\|\mathbf{u} - \widetilde{\mathbf{u}}_h\|_{0,K}  + \| \mathbf{u} - \mathbf{u}_{\pi}\|_{0,K}) (\|(\nabla \phi) -  (\nabla \phi)_I\|_{0,K} + \|(\nabla \phi) -  (\nabla \phi)_{\pi}\|_{0,K}) \\
& \leq C \,  h^{k-1} (|p|_{k} + h^2|u|_{k+1}) \, h \| \nabla \phi\|_1 \leq C \,  h^{k-1} |p|_{k} \, h \|\phi\|_2 \leq C \, h^{k} (|p|_{k} + h^2|u|_{k+1}) \,\|\rho_h\|_0
\end{split}
\end{equation}
For what concerns the second term we have
\begin{equation}
\label{eq:mu2pressure}
\begin{split}
\mu_2((\nabla \phi)_I) &= b((\nabla \phi)_I, \, z) = b((\nabla \phi)_I - \nabla \phi, \, z)  + b(\nabla \phi, \, z) \\
& \leq C (|\nabla \phi - (\nabla \phi)_I |_1 + \|\phi\|_2) \|z\|_0 \leq C (|\nabla \phi|_1 + \|\phi\|_2) \|z\|_0 \\
& \leq C \, h^{k} |p|_k \|\phi\|_2 \leq C \, h^{k} |p|_k \|\rho_h\|_0. 
\end{split}
\end{equation}
Finally, for the third term we begin by observing that from \eqref{eq:auxiliary} 
\begin{equation}
\label{eq:3atilde}
b(\boldsymbol{\chi}_h, \, \phi) = b(\boldsymbol{\chi}_h, \, \phi - \phi_{\pi}), 
\end{equation}
for all $\phi_{\pi} \in Q_h$, and by the Green formula
\begin{equation}
\label{eq:3btilde}
b(\boldsymbol{\chi}_h, \, \phi) = - a(\boldsymbol{\chi}_h, \, \nabla \phi) = - a(\boldsymbol{\chi}_h, \, \nabla \phi - (\nabla \phi)_I)  - a(\boldsymbol{\chi}_h, \, (\nabla \phi)_I). 
\end{equation}
Therefore, by collecting \eqref{eq:3atilde}, \eqref{eq:3btilde}, and using the previous error estimate,  it holds that
\begin{equation}
\label{eq:mu3pressure}
\begin{split}
\mu_3((\nabla \phi)_I) &=   - a(\boldsymbol{\chi}_h, \, (\nabla \phi)_I) = a(\boldsymbol{\chi}_h, \, \nabla \phi - (\nabla \phi)_I)  + b(\boldsymbol{\chi}_h, \, \phi - \phi_{\pi}) \\
& \leq C (\|\boldsymbol{\chi}_h\|_0 \|\nabla \phi - (\nabla \phi)_I\|_0 + \|\boldsymbol{\chi}_h\|_{\mathbf{V}} \|\phi - \phi_{\pi}\|_0) \\
& \leq C \, h^{k-1} (|p|_{k} + h^2|u|_{k+1}) \, h \, \|\phi\|_2 + h^{k-2} (|p|_{k} + h^2|u|_{k+1}) \, h^2 \, \|\phi\|_2 ) \leq C h^{k} (|p|_{k} + h^2|u|_{k+1}) \|\rho_h\|_0.
\end{split}
\end{equation}
Finally by collecting  \eqref{eq:mu1pressure}, \eqref{eq:mu2pressure} and \eqref{eq:mu3pressure} in \eqref{eq:mupressure} we get the thesis.
\end{proof}

\addcontentsline{toc}{section}{\refname}
\bibliographystyle{plain}
\bibliography{biblio}

\begin{thebibliography}{10}

\bibitem{VEM-enhanced}
B.~Ahmad, A.~Alsaedi, F.~Brezzi, L.~D. Marini, and A.~Russo.
\newblock Equivalent projectors for virtual element methods.
\newblock {\em Comput. Math. Appl.}, 66(3):376--391, 2013.

\bibitem{anaya2015priori}
V.~Anaya, D.~Mora, R.~Oyarz{\'u}a, and R.~Ruiz-Baier.
\newblock A priori and a posteriori error analysis of a mixed scheme for the
  {B}rinkman problem.
\newblock {\em Numer. Math.}, pages 1--37, 2015.

\bibitem{VEM-stream}
P.~F. Antonietti, L.~Beir{\~a}o~da Veiga, D.~Mora, and M.~Verani.
\newblock A stream virtual element formulation of the {S}tokes problem on
  polygonal meshes.
\newblock {\em SIAM J. Numer. Anal.}, 52(1):386--404, 2014.

\bibitem{giani}
P.~F. Antonietti, S.~Giani, and P.~Houston.
\newblock {$hp$}-version composite discontinuous {G}alerkin methods for
  elliptic problems on complicated domains.
\newblock {\em SIAM J. Sci. Comput.}, 35(3):A1417--A1439, 2013.

\bibitem{VEM-volley}
L.~Beir{\~a}o~da Veiga, F.~Brezzi, A.~Cangiani, G.~Manzini, L.~D. Marini, and
  A.~Russo.
\newblock Basic principles of virtual element methods.
\newblock {\em Math. Models Methods Appl. Sci.}, 23(1):199--214, 2013.

\bibitem{VEM-elasticity}
L.~Beir{\~a}o~da Veiga, F.~Brezzi, and L.~D. Marini.
\newblock Virtual elements for linear elasticity problems.
\newblock {\em SIAM J. Numer. Anal.}, 51(2):794--812, 2013.

\bibitem{VEM-hitchhikers}
L.~Beir{\~a}o~da Veiga, F.~Brezzi, L.~D. Marini, and A.~Russo.
\newblock The {H}itchhiker's {G}uide to the {V}irtual {E}lement {M}ethod.
\newblock {\em Math. Models Methods Appl. Sci.}, 24(8):1541--1573, 2014.

\bibitem{da2016mixed}
L.~Beir{\~a}o~da Veiga, F.~Brezzi, L.~D. Marini, and A.~Russo.
\newblock Mixed virtual element methods for general second order elliptic
  problems on polygonal meshes.
\newblock {\em ESAIM Math. Model. Numer. Anal.}, 50(3):727--747, 2016.

\bibitem{conforming}
L.~Beir{\~a}o Da~Veiga, F.~Brezzi, L.D. Marini, and A.~Russo.
\newblock ${H}(div)$ and ${H}(curl)$-conforming {VEM}.
\newblock {\em Numer. Math.}, pages 1--30, 2015.

\bibitem{hpvem}
L.~Beir{\~a}o~da Veiga, A.~Chernov, L.~Mascotto, and A.~Russo.
\newblock Basic principles of {$hp$} virtual elements on quasiuniform meshes.
\newblock {\em Math. Models Methods Appl. Sci.}, 26(8):1567--1598, 2016.

\bibitem{BLM11book}
L.~Beir{\~a}o~da Veiga, K.~Lipnikov, and G.~Manzini.
\newblock {\em The mimetic finite difference method for elliptic problems},
  volume~11 of {\em MS\&A. Modeling, Simulation and Applications}.
\newblock Springer, 2014.

\bibitem{da2015symplectic}
L.~Beir{\~a}o Da~Veiga, L.~Lopez, and G.~Vacca.
\newblock Mimetic finite difference methods for {H}amiltonian wave equations in
  2{D}.
\newblock {\em arXiv preprint:1505.01017}, 2015.

\bibitem{lovadina}
L.~Beir{\~a}o~da Veiga, C.~Lovadina, and D.~Mora.
\newblock A virtual element method for elastic and inelastic problems on
  polytope meshes.
\newblock {\em Comput. Methods Appl. Mech. Engrg.}, 295:327--346, 2015.

\bibitem{2016stability}
L.~Beir{\~a}o Da~Veiga, C.~Lovadina, and A.~Russo.
\newblock Stability {A}nalysis for the {V}irtual {E}lement {M}ethod.
\newblock {\em arXiv preprint arXiv:1607.05988}, 2016.

\bibitem{stokes}
L.~Beir{\~a}o~da Veiga, C.~Lovadina, and G.~Vacca.
\newblock Divergence free virtual elements for the {S}tokes problem on
  polygonal meshes.
\newblock {\em in press in ESAIM: M2AN}, 2016.

\bibitem{navier-stokes}
L.~Beir{\~a}o Da~Veiga, C.~Lovadina, and G.~Vacca.
\newblock Divergence free virtual elements for the {N}avier-{S}tokes problem on
  polygonal meshes.
\newblock {\em in preparation}, 2017.

\bibitem{BM13}
L.~Beir{\~a}o~da Veiga and G.~Manzini.
\newblock A virtual element method with arbitrary regularity.
\newblock {\em IMA J. Numer. Anal.}, 34(2):759--781, 2014.

\bibitem{benedetto2016hybrid}
M.~F. Benedetto, S.~Berrone, A.~Borio, S.~Pieraccini, and S.~Scial{\`o}.
\newblock A hybrid mortar virtual element method for discrete fracture network
  simulations.
\newblock {\em J. Comput. Phys.}, 306:148--166, 2016.

\bibitem{BBPSXX}
M.~F. Benedetto, S.~Berrone, S.~Pieraccini, and S.~Scial{\`o}.
\newblock The virtual element method for discrete fracture network simulations.
\newblock {\em Comput. Methods Appl. Mech. Engrg.}, 280:135--156, 2014.

\bibitem{Bishop13}
J.~E. Bishop.
\newblock A displacement-based finite element formulation for general polyhedra
  using harmonic shape functions.
\newblock {\em Internat. J. Numer. Methods Engrg.}, 97(1):1--31, 2014.

\bibitem{JA12}
J.~Bonelle and A.~Ern.
\newblock Analysis of compatible discrete operator schemes for elliptic
  problems on polyhedral meshes.
\newblock {\em ESAIM Math. Model. Numer. Anal.}, 48(2):553--581, 2014.

\bibitem{MR2373954}
S.~C. Brenner and L.~R. Scott.
\newblock {\em The mathematical theory of finite element methods}, volume~15 of
  {\em Texts in Applied Mathematics}.
\newblock Springer, New York, third edition, 2008.

\bibitem{BFMXX}
F.~Brezzi, R.~S. Falk, and L.~D. Marini.
\newblock Basic principles of mixed virtual element methods.
\newblock {\em ESAIM Math. Model. Numer. Anal.}, 48(4):1227--1240, 2014.

\bibitem{fortin1991mixed}
F.~Brezzi and M.~Fortin.
\newblock {\em Mixed and hybrid finite element methods}.
\newblock New York: Springer-Verlag, 1991.

\bibitem{BLS05bis}
F.~Brezzi, K.~Lipnikov, and M.~Shashkov.
\newblock Convergence of the mimetic finite difference method for diffusion
  problems on polyhedral meshes.
\newblock {\em SIAM J. Numer. Anal.}, 43(5):1872--1896, 2005.

\bibitem{Brezzi:Marini:plates}
F.~Brezzi and L.~D. Marini.
\newblock Virtual element methods for plate bending problems.
\newblock {\em Comput. Methods Appl. Mech. Engrg.}, 253:455--462, 2013.

\bibitem{caceres2015mixed}
E.~C{\'a}ceres and G.~N. Gatica.
\newblock A mixed virtual element method for the pseudostress--velocity
  formulation of the stokes problem.
\newblock {\em IMA J. Numer. Anal.}, page drw002, 2015.

\bibitem{PolyDG-1}
A.~Cangiani, E.H. Georgoulis, and P.~Houston.
\newblock hp-version discontinuous galerkin methods on polygonal and polyhedral
  meshes.
\newblock {\em Math. Mod. Meth. Appl. Sci.}, 24(10):2009--2041, 2014.

\bibitem{DiPietro-Ern-1}
D.~Di~Pietro and A.~Ern.
\newblock A hybrid high-order locking-free method for linear elasticity on
  general meshes.
\newblock {\em Comput. Methods Appl. Mech. Engrg.}, 283(0):1--21, 2015.

\bibitem{di2016discontinuous}
D.~A. Di~Pietro, A.~Ern, A.~Linke, and F.~Schieweck.
\newblock A discontinuous skeletal method for the viscosity-dependent stokes
  problem.
\newblock {\em Comput. Methods Appl. Mech. Engrg.}, 306:175--195, 2016.

\bibitem{frittelli}
M.~Frittelli and I.~Sgura.
\newblock Virtual element method for the {L}aplace-{B}eltrami equation on
  surfaces.
\newblock {\em arXiv preprint arXiv:1607.05988}, 2016.

\bibitem{GTP14}
A.~L. Gain, C.~Talischi, and G.~H. Paulino.
\newblock On the virtual element method for three-dimensional linear elasticity
  problems on arbitrary polyhedral meshes.
\newblock {\em Comput. Methods Appl. Mech. Engrg.}, 282:132--160, 2014.

\bibitem{gardini}
F.~Gardini and G.~Vacca.
\newblock Virtual element method for second order elliptic eigenvalue problems.
\newblock {\em preprint}, 2016.

\bibitem{neilan}
J.~S. Howell, M.~Neilan, and N.~J. Walkington.
\newblock A {D}ual-{M}ixed {F}inite {E}lement {M}ethod for the {B}rinkman
  {P}roblem.
\newblock {\em J. Comput. Math.}, 2:1--17, 2016.

\bibitem{stenberg}
M.~Juntunen and R.~Stenberg.
\newblock Analysis of finite element methods for the brinkman problem.
\newblock {\em Calcolo}, 47(3):129--147, 2010.

\bibitem{karper2009unified}
T.~Karper, K.~A. Mardal, and R.~Winther.
\newblock Unified finite element discretizations of coupled {D}arcy--{S}tokes
  flow.
\newblock {\em Numer. Methods Partial Differential Equations}, 25(2):311--326,
  2009.

\bibitem{LMSXX}
K.~Lipnikov, G.~Manzini, and M.~Shashkov.
\newblock Mimetic finite difference method.
\newblock {\em J. Comput. Phys.}, 257:1163--1227, 2014.

\bibitem{lopezvacca}
L.~Lopez and G.~Vacca.
\newblock Spectral properties and conservation laws in {M}imetic {F}inite
  {D}ifference methods for {PDE}s.
\newblock {\em J. Comput. Appl. Math.}, 292:760--784, 2016.

\bibitem{mardal2002robust}
K.~A. Mardal, X.~C. Tai, and R.~Winther.
\newblock A robust finite element method for {D}arcy--{S}tokes flow.
\newblock {\em SIAM J. Numer. Anal.}, 40(5):1605--1631, 2002.

\bibitem{mora2015virtual}
D.~Mora, G.~Rivera, and R.~Rodr{\'\i}guez.
\newblock A virtual element method for the {S}teklov eigenvalue problem.
\newblock {\em Math. Models Methods Appl. Sci.}, 25(08):1421--1445, 2015.

\bibitem{Gillette-1}
A.~Rand, A.~Gillette, and C.~Bajaj.
\newblock Interpolation error estimates for mean value coordinates over convex
  polygons.
\newblock {\em Adv. Comput. Math.}, 39(2):327--347, 2013.

\bibitem{RW12}
S.~Rjasanow and S.~Wei{\ss}er.
\newblock Higher order {BEM}-based {FEM} on polygonal meshes.
\newblock {\em SIAM J. Numer. Anal.}, 50(5):2357--2378, 2012.

\bibitem{ST04}
N.~Sukumar and A.~Tabarraei.
\newblock Conforming polygonal finite elements.
\newblock {\em Int. J. Numer. Meth. Eng.}, 61(12):2045--2066, 2004.

\bibitem{POLY37}
C.~Talischi and G.~H. Paulino.
\newblock Addressing integration error for polygonal finite elements through
  polynomial projections: a patch test connection.
\newblock {\em Math. Models Methods Appl. Sci.}, 24(8):1701--1727, 2014.

\bibitem{TPPM10}
C.~Talischi, G.~H. Paulino, A.~Pereira, and I.~F.~M. Menezes.
\newblock Polygonal finite elements for topology optimization: a unifying
  paradigm.
\newblock {\em Int. J. Numer. Meth. Eng.}, 82(6):671--698, 2010.

\bibitem{TPPM12}
C.~Talischi, G.~H. Paulino, A.~Pereira, and I.~F.M~. Menezes.
\newblock Polymesher: a general-purpose mesh generator for polygonal elements
  written in matlab.
\newblock {\em Struct. Multidisc Optimiz.}, 45(3):309--328, 2012.

\bibitem{vaccahyper}
G.~Vacca.
\newblock Virtual {E}lement {M}ethods for hyperbolic problems on polygonal
  meshes.
\newblock {\em Comput. Math. Appl.},
  http://dx.doi.org/10.1016/j.camwa.2016.04.029, 2016.

\bibitem{vaccabis}
G.~Vacca and L.~Beir{\~a}o Da~Veiga.
\newblock Virtual element methods for parabolic problems on polygonal meshes.
\newblock {\em Numer. Methods Partial Differential Equations},
  31(6):2110--2134, 2015.

\bibitem{vassilev}
D.~H. Vassilev.
\newblock Discretizations and solvers for{C}oupling {S}tokes-{D}arcy flows with
  transport.
\newblock {\em Doctoral Dissertation, University of Pittsburgh}, 2010.

\bibitem{VW13}
M.~Vohralik and B.~I. Wohlmuth.
\newblock Mixed finite element methods: implementation with one unknown per
  element, local flux expressions, positivity, polygonal meshes, and relations
  to other methods.
\newblock {\em Math. Models Methods Appl. Sci.}, 23(5):803--838, 2013.

\bibitem{Wachspress11}
E.~L. Wachspress.
\newblock Barycentric coordinates for polytopes.
\newblock {\em Comput. Math. Appl.}, 61(11):3319--3321, 2011.

\bibitem{wriggers}
P.~Wriggers, W.~T. Rust, and B.~D. Reddy.
\newblock A virtual element method for contact.
\newblock {\em Comput. Mech.}, 58(6):1039--1050, 2016.

\bibitem{xie2008uniformly}
X.~Xie, J.~Xu, and G.~Xue.
\newblock Uniformly-stable finite element methods for
  {D}arcy-{S}tokes-{B}rinkman models.
\newblock {\em J. Comput. Math.}, pages 437--455, 2008.

\bibitem{plates-zhao}
J.~Zhao, S.~Chen, and B.~Zhang.
\newblock The nonconforming virtual element method for plate bending problems.
\newblock {\em Math. Mod.and Meth. in Appl. Sci.}, 26(09):1671--1687, 2016.

\end{thebibliography}

\end{document}